\newtheorem{maintheorem}{Theorem}
\newtheorem{theorem}{Theorem}[section]
\newtheorem{lemma}[theorem]{Lemma}
\newtheorem{proposition}[theorem]{Proposition}
\newtheorem{corollary}[theorem]{Corollary}
\theoremstyle{definition}
\newtheorem{remark}[theorem]{Remark}
\newtheorem{questioni}{Question}
\numberwithin{equation}{section}
\numberwithin{figure}{section}
\def\index{\mathrm{index}}
\def\loc{\mathrm{loc}}
\def\Diff{\mathrm{Diff}}
\begin{document}

\title[
$C^{2}$-robust heterodimensional tangencies 
]{
$C^{2}$-robust heterodimensional tangencies 
}

\author{Shin Kiriki} 
\address{Department of Mathematics, Kyoto University of Education, 
1 Fukakusa-Fujinomori, Fushimi, Kyoto, 612-8522, JAPAN}
\email{skiriki@kyokyo-u.ac.jp}

\author{Teruhiko Soma}
\address{Department of Mathematics and Information Sciences,
Tokyo Metropolitan University,
Minami-Ohsawa 1-1, Hachioji, Tokyo 192-0397, JAPAN}
\email{tsoma@tmu.ac.jp}

\subjclass[2000]{Primary:37D20, 37D30, 37G25  Secondary:37C29, 37G30}
\keywords{heterodimensional cycle, tangency, $C^{2}$ robustness, blender, dominated splitting}

\date{\today}


\begin{abstract}
In this paper, 
we give sufficient conditions for the existence of $C^{2}$ robust heterodimensional tangency, 
and 
present a nonempty open set  in $\Diff^{2}(M)$ with $\dim M\geq 3$ 
each element of which 
has a non-degenerate 
heterodimensional tangency on  a $C^{2}$ robust heterodimensional cycle. 
\end{abstract}
\maketitle

\section{Introduction}\label{sec1}
The purpose of this paper is to show the existence of a large class 
of diffeomorphisms on $C^2$ manifold $M$ whose global dynamics is persistently non-dominated on cycles  by 
robust heterodimensional tangencies.  
To quickly explain our results, let us begin by recalling key words of non-hyperbolic diffeomorphisms: 
dominated splittings, homoclinic tangencies, 
and heterodimensional cycles.

The dominated splitting is perhaps a bottom structure obtained by weakening hyperbolicity,
which was first presented by Pliss \cite{Pl72}. 
Precisely,  
we say that a compact subset 
$\Lambda$ of a smooth closed manifold $M$ which is invariant by $f\in \Diff^{r}(M)$ has a \emph{dominated splitting} 
if  $T_{\Lambda}M$ is expressed by the direct sum of $Df$-invariant subbundles $E$ and $F$ 
as $T_{\Lambda}M=E\oplus F$ 
whose fibers have constant dimensions
and 
there are $C>0$ and $\lambda>1$ such that 
for any integer $n > 0$, $x\in\Lambda$ 
and any pair of unitary vectors $(u, v)\in E(x)\times F(x)$, 
$$
\left\| Df^{n}(x)u\right\|  \left\| Df^{n}(x)v \right\|^{-1}<C\lambda^{-n}.
$$
If $Df^{n}$ is uniformly contracting on $E$ and  expanding on $F$, 
$\Lambda$ is called \emph{hyperbolic}.

Let $P$ be a saddle point for  $f\in \Diff^{r}(M)$. 
A point $Y$ of the intersection between the stable  manifold  $W^{s}(P)$
and the unstable manifold $W^{u}(P)$ is a \emph{homoclinic tangency} of $P$ 
if  
$$P\neq Y,\quad T_{Y}M\neq T_{Y} W^{s}(P)+ T_{Y}  W^{u}(P).$$
Obviously,  if $f$ has a homoclinic tangency, then
any $f$-invariant set containing the orbit of the tangency does not have any dominated splitting. 
Moreover, it is worth noting that Wen  \cite[Theorem A]{W02} 
proved that the inverse is also true in the  $C^{1}$ topology if $f$ is restricted to the preperiodic or prehomoclinic sets, that is, if a dominated splitting cannot be defined on such $f$-invariant sets, then there is a diffeomorphism arbitrarily  $C^{1}$  close to $f$ which has a homoclinic tangency, see also \cite{Gou10}.
On the other hand, the presence of dominated splittings is equivalent to 
the absence of infinitely many periodic sinks or sources 
 in $C^{1}$ generic  diffeomorphisms, see \cite{ABC06}.

From the definition, a tangency associated with a periodic point  is easily broken by a generic perturbation.
However,  Newhouse showed in \cite{N79} that   
there is an open set in $\Diff^{2}(M)$ with $\dim(M)=2$ arbitrarily $C^{2}$ close to $f$ in which any 
diffeomorphism has a robust homoclinic tangency associated with  some nontrivial set containing 
the continuation of the periodic point.  On the other hand, 
Moreira showed in \cite{Mo11} that the similar result does not hold for generic subset in $\Diff^{1}(M)$ with $\dim(M)=2$, 
see comments at the end of this section.
The Newhouse result was extended to the higher dimensional cases with the $C^{2}$ topology
in \cite{GTS93, PV94,R94,T01}.  
Among others, several results of \cite{PV94}
play important roles in this paper.  Thus, we need to work at least in the $C^{2}$ topology.

Another known cause of non-hyperbolicity is the presence of heterodimensional cycles. 
For a diffeomorphism $f$ on a smooth manifold $M$ of $\dim(M)\geq 3$, 
we say that  $f$ has a \emph{heterodimensional cycle}  associated with two saddle points $P$ and $Q$ if 
 $$\index(P)\neq  \index(Q),\quad 
 W^{s}(P)\cap  W^{u}(Q) \neq \emptyset,\ \quad 
W^{s}(Q)\cap  W^{u}(P) \neq \emptyset,
 $$
 where $\index(P)$ stands for the dimension of the unstable manifold of $P$. 
By the first index condition, one of these two intersections,
say $W^{s}(Q)\cap  W^{u}(P)$, satisfies
$$
 \dim(T_{X}W^{s}(Q))+ \dim(T_{X}W^{u}(P))<\dim(T_{X}M)
 $$
 for every $X\in W^{s}(Q)\cap  W^{u}(P)$. 
 On the other hand, 
the  intersection $W^{s}(P)\cap W^{u}(Q)$ satisfies 
$$
\dim(T_{Y}W^{s}(P))+ \dim(T_{Y}W^{u}(Q)) > \dim(T_{Y}M)
$$ 
for every $Y\in W^{s}(P)\cap W^{u}(Q)$. 
Thus the condition may permit transverse points.
The set of such points is denoted by $W^{s}(P)\pitchfork W^{u}(Q)$.
A non-transverse point  
$$Y \in (W^{s}(P)\cap W^{u}(Q) ) \setminus (W^{s}(P)\pitchfork W^{u}(Q))$$ 
is called a \emph{heterodimensional tangency}.
We say that such a heterodimensional tangency is \emph{strict} if
$$
T_{Y} W^{s}(P)=T_{Y} W^{u}(Q).
$$
Observe that, when $\dim(M)=3$, any heterodimensional tangency is of strict type.

The condition for diffeomorphisms being away from homoclinic tangency leads to a dominated splitting 
even for heterodimensional cycle associated with $P$ and $Q$. 
In fact, D\'iaz and Rocha showed in \cite[Theorem A]{DR01} that 
if  the diffeomorphism  is $C^{r}$ away 
from diffeomorphisms  having  
homoclinic tangencies associated  with   continuations of $P$ and $Q$, 
it can be $C^{r}$-approximated by one with an invariant set containing the continuations  which 
has a (strong partially hyperbolic) dominated splitting.  
On the other hand, it is known that, under some volume condition, the absence of dominated splittings brings on  
heterodimensional cycles in $C^{1}$ generically \cite{Shino11}. 

In the present paper, we  focus attention on heterodimensional  tangencies which 
interfere with dominated splittings. 
As for heterodimensional tangencies of $3$-dimensional diffeomorphisms, 
several results have been already presented as follows.
\begin{description}
\item[$C^{1}$ Newhouse phenomenon \cite{DNP06}]  
If $f$ has  a parabolic heterodimensional tangency between 
$P$ and $Q$ which are persistently linked, 
it can be $C^{1}$-approximated by a diffeomorphism 
having either infinitely many sinks or sources. Moreover, 
if the Jacobian of $f$ at $P$ is smaller than one and 
that of $Q$ is greater than one, 
it can be $C^{1}$-approximated by a diffeomorphism 
having  infinitely many nontrivial minimal Cantor sets.
\item[Strange attractors, $C^{2}$ Newhouse phenomenon \cite{KNS10}] 
For any  generic  $2$-parameter family $\{f_{\mu,\nu}\}$ with $f_{0,0}=f$ 
which has a nondegenerate  heterodimensional tangency associated  with $P$ and $Q$, 
there exists an open set of subfamilies  
exhibiting infinitely many homoclinic tangencies of the continuation of $P$ 
which unfold generically. It follows that one can detect 
a positive Lebesgue measure set $\mathcal{A}$ in the 
$\mu\nu$-plane arbitrarily near $f$  such that for any $(\mu,\nu) \in \mathcal{A}$,  
$f_{\mu,\nu}$ exhibits non-hyperbolic strange attractors. Moreover, one can detect 
an open set of diffeomorphisms which have $C^{r}$-robust homoclinic tangency for  $r\geq 2$. 
\item[Renormalization, robust cycle \cite{DKS-pre}]
Under appropriate conditions, one can obtain a renormalization in a neighborhood of the heterodimensional tangency 
associated  with $P$ and $Q$ such that its   
return maps converge to the H\'enon-like family with the center unstable direction  
and admit blender-horseshoes. 
Using this fact, for every $r\geq1$, one can detect $C^{r}$-robust cycles associated with 
the blender-horseshoe and the continuation of $P$ arbitrarily $C^{r}$-close to $f$.
\end{description}
\medskip

By the way,  though the above works start from the heterodimensional tangency on a heterodimensional cycle,
its robustness is not discussed. However, 
by a little perturbation, heterodimensional tangencies are  broken as well as homoclinic tangencies.
Thus, considering Newhouse's works, we ask naturally:

\begin{questioni}\label{q_1}
Does there exist an open set of diffeomorphisms 
which have heterodimensional tangencies on 
heterodimensional cycles?
\end{questioni}

This paper will be devoted to answering the question and studying related topics.
To present our results, we have to introduce 
several   terminologies.

A \emph{basic set} for $f$ is a compact hyperbolic transitive locally maximal $f$-invariant subset in $M$.
A basic set is \emph{nontrivial} if it is not an periodic orbit.
The dimension of the unstable bundle on a basic set $\Lambda$ for $f$ is called the \emph{unstable index} and denoted by $\index(\Lambda)$.
We say that a diffeomorphism $f\in\Diff^{r}(M)$ 
has a \emph{heterodimensional  cycle} associated with basic sets $\Lambda_{0}$ and $\Lambda_{1}$ if 
$$\index(\Lambda_{0})\neq  \index(\Lambda_{1}),\quad 
W^{u}(\Lambda_{0})  \cap W^{s}(\Lambda_{1})  \neq \emptyset,\ \quad 
W^{s}(\Lambda_{0})\cap  W^{u}(\Lambda_{1}) \neq \emptyset.
$$
A \emph{heterodimensional tangency} between $W^u(q)$ 
and $W^s(p)$ with  $q\in \Lambda_{0}$ and $p\in \Lambda_{1}$  is 
defined as in the case of periodic points $Q, P$.
Now let us suppose $\index(\Lambda_{0})>  \index(\Lambda_{1})$ and 
define that 
 $f$ has a $C^{r}$ \emph{robust heterodimensional tangency}  associated with $\Lambda_{0}$ and $\Lambda_{1}$ 
 if  there exists a $C^{r}$ neighborhood $\mathcal{U}$ of $f$ such that, for any $g\in \mathcal{U}$, 
there exist the continuations $\Lambda_{0,g}$ of $\Lambda_{0}$ and $\Lambda_{1,g}$ of $\Lambda_{1}$ which contain points  
$\tilde q\in \Lambda_{0, g}$ and $\tilde{p}\in \Lambda_{1,g}$
such that 
\begin{itemize}
\item  $W^{u}(\tilde q)$ and $W^{s}(\tilde p)$ contain a  heterodimensional tangency.
\end{itemize}
Moreover, the heterodimensional tangency is in a $C^{r}$ \emph{robust cycle} 
if there exist  
$\hat q \in \Lambda_{0,g}$ and $\hat p \in \Lambda_{1,g}$ such that 
\begin{itemize}
\item $W^{u}(\hat p)\cap W^{s}(\hat q)$ is nonempty.
\end{itemize}

In what follows of this paper, we only discuss the case 
of 
$$\dim M=d\geq 3,\ 
\index(\Lambda_{0})=d-1,\ 
\index(\Lambda_{1})=1.
$$
Under this assumption,  
a heterodimensional tangency $Y\in W^{u}(\tilde q)\cap W^{s}(\tilde p)$ can be characterized 
from a topological viewpoint as follows. 
Consider a local coordinate $(x_{1},\ldots, x_{d})$ on a neighborhood of $Y$ 
with $Y=\mathbf{0}$ such that 
a small $(d-1)$-disk in $W^{s}(\tilde p)$ containing $Y$ is presented by the graph of 
the constant function $x_{d}=0$, and 
a small $(d-1)$-disk in $W^{u}(\tilde q)$ containing $Y$ is given as the graph of a 
$C^{2}$ function $u:\mathbb{R}^{d-1}\to \mathbb{R}$ with $x_{d}=u(x_{1},\ldots, x_{d-1})$ 
and $\dfrac{\partial u}{\partial x_i}(\mathbf{0})=0$ for $i=1,\dots,d-1$. 
We say that the tangency is \emph{non-degenerate} if the Hessian matrix $Hu(\mathbf{0})=\left(\dfrac{\partial^{2}u(\mathbf{0})}{\partial x_{i}\partial x_{j}}\right)$ 
is regular.
A non-degenerate tangency is \emph{elliptic} if all the eigenvalues of the Hessian matrix have the 
same sign, and otherwise \emph{hyperbolic}, see \cite{KNS10} for the case of $d=3$. 
\medskip

From now on, we suppose that $M$ is a closed $C^{2}$ manifold  
with Riemannian metric and $\Diff^{2}(M)$ is 
the space of all $C^{2}$ diffeomorphisms on $M$ 
endowed with the $C^{2}$ topology. 
For a given saddle periodic point $P$ and 
a basic set $\Lambda$ of $f\in \Diff^2(M)$, 
$\mathrm{per}(P)$ denotes  the minimum period of $P$, 
and 
$\Lambda_{g}$ denotes the continuation of $\Lambda$ for any $g$ sufficiently close to $f$ in $\Diff^2(M)$.

\begin{maintheorem}\label{theoremA}
Let $f$ be an element in $\Diff^{2}(M)$ with $\dim M=d\geq 3$ which has nontrivial basic sets $\Lambda_0$, $\Lambda_1$ 
and saddle periodic points $Q\in \Lambda_0$,  $P\in \Lambda_1$ 
such that 
\begin{enumerate}[\rm (1)]
\item 
$Df^{\mathrm{per}(P)}(P)$ has eigenvalues satisfying 
$$
|\alpha_{1}|\leq  \cdots  \leq |\alpha_{d-2}| < |\alpha_{c}|<1<|\alpha_{u}|, \  |\alpha_{c}\alpha_{u}|<1,
$$
 $Df^{\mathrm{per}(Q)}(Q)$ has eigenvalues satisfying 
$$
|\beta_{s}|<1< |\beta_{c}| <|\beta_{1}| \leq  \cdots  \leq |\beta_{d-2}|.
$$
\item $W^{u}(Q)\cap W^{s}(P)$  contains a heterodimensional tangency of elliptic type 
and $W^{u}(P)\cap W^{s}(Q)$ contains a nontransverse intersection.
\end{enumerate}
Then, there is a nonempty open set $\mathcal{O}\subset \Diff^{2}(M)$  
whose closure contains $f$ and that satisfies following condition: 
for every  $g\in \mathcal{O}$, there exists a  nontrivial basic set $\Lambda_{2,g}$ of index one 
such that 
$g$ has a heterodimensional tangency of elliptic type    
associated with $\Lambda_{2,g}$ and the continuation $\Lambda_{0,g}$ of $\Lambda_0$. 
\end{maintheorem}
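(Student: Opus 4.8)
The plan is to convert the fragile heterodimensional tangency between the periodic points $Q$ and $P$ into a $C^{2}$-robust one by replacing one of the periodic points with a suitable nontrivial basic set — namely a blender-horseshoe — and then propagate the construction through a full $C^{2}$ neighborhood. First I would localize: pick a transition chart near the heterodimensional tangency point $Y \in W^{u}(Q)\cap W^{s}(P)$ of elliptic type, and, using hypothesis (1) on the eigenvalues of $Df^{\mathrm{per}(P)}(P)$ (in particular the weak contraction $|\alpha_{c}|$ with $|\alpha_{c}\alpha_{u}|<1$ dominating the strong directions), perform a $C^{2}$ renormalization of the return map in a neighborhood of the tangency along the lines of \cite{PV94, DKS-pre}. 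The eigenvalue conditions are tailored so that the renormalized return maps converge to a H\'enon-like family with a one-dimensional center-unstable direction; inside such a family one finds, for an open set of parameters, a \emph{blender-horseshoe} $\Lambda_{2}$ of index one whose stable set has the superposition (covering) property with respect to a family of vertical-like disks.

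The second step is to use the covering property of the blender $\Lambda_{2,g}$ to produce the tangency robustly. Because $\index(\Lambda_{0})=d-1$ is the large index, the unstable manifolds $W^{u}(\tilde q)$, $\tilde q\in\Lambda_{0,g}$, sweep out a $(d-1)$-parameter family of large disks; the blender property guarantees that some member of this family is contained in $W^{s}(\Lambda_{2,g})$, and one arranges the geometry of the chart so that the contact is \emph{tangential} rather than transverse — here the ellipticity of the original Hessian $Hu(\mathbf 0)$ is what forces the quadratic contact to persist with a definite sign and hence be non-removable. This is the standard mechanism by which Newhouse-type thickness/blender arguments yield $C^{2}$-robust tangencies: the tangency is not destroyed by perturbation because it is forced by a topological covering relation together with a second-order nondegeneracy that survives $C^{2}$-small perturbations. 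I would verify that the resulting tangency is again of elliptic type by checking that the dominant term of the Hessian of the perturbed unstable graph is inherited from the H\'enon-like normal form.

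The third step is persistence over a neighborhood. For every $g$ in a small $C^{2}$ ball $\mathcal{O}$ around the constructed diffeomorphism, the continuations $\Lambda_{0,g}$ and $\Lambda_{2,g}$ exist (hyperbolic basic sets have well-defined continuations), the blender-horseshoe property is itself $C^{1}$-open \cite{DKS-pre}, and the renormalization estimates that yield the H\'enon-like limit are $C^{2}$-stable; one then repeats the covering argument uniformly in $\mathcal{O}$ to obtain, for each $g$, a point $\tilde q\in\Lambda_{0,g}$ and a point in $\Lambda_{2,g}$ whose invariant manifolds have an elliptic heterodimensional tangency. Finally, $f$ lies in $\overline{\mathcal{O}}$ because the whole construction is obtained from $f$ by a $C^{2}$-small perturbation unfolding the given tangency.

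The main obstacle I expect is the second step: controlling the \emph{geometry} of the intersection carefully enough to guarantee tangential rather than transverse contact, robustly. The blender readily gives \emph{intersections} of stable and unstable sets, but making those intersections non-transverse for \emph{all} nearby diffeomorphisms requires that the family of unstable disks coming from $\Lambda_{0,g}$ meets the blender's stable lamination along a fold whose position is pinned by the center-unstable H\'enon dynamics; reconciling the $(d-1)$-dimensional "large index" side with the one-dimensional center-unstable direction of the blender, and checking that the quadratic contact genuinely has all eigenvalues of the same sign after renormalization, is the delicate part. The hypothesis in (2) that $W^{u}(P)\cap W^{s}(Q)$ already contains a nontransverse intersection is what I would exploit to set up the blender's stable side in the right relative position, so that the cycle — and hence the "robust cycle" refinement — comes essentially for free once the tangency side is under control.
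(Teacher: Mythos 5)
Your proposal correctly identifies several ingredients the paper uses — renormalizing at the tangency using the sectionally dissipative eigenvalue condition (1), producing a new nontrivial index-one basic set $\Lambda_{2}$ near the orbit of the tangency, and exploiting the ellipticity of the Hessian to control the geometry — but the central mechanism you invoke for robustness is the wrong one, and this is a genuine gap, not just a rough edge. You propose that $\Lambda_{2}$ should be a blender-horseshoe and that its superposition (covering) property forces the tangency to persist. A blender's covering property guarantees that a $C^{1}$-open family of disks meets $W^{s}_{\loc}(\Lambda_{2,g})$, i.e.\ it gives \emph{robust nonempty intersection}; it gives no control whatsoever on whether that intersection is transverse or tangential. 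You flag this yourself as ``the main obstacle,'' but then write that one ``arranges the geometry of the chart so that the contact is tangential,'' without any argument — and there is no such arrangement: a single quadratic contact, elliptic or not, is destroyed by an arbitrarily small normal translation, and the covering property cannot pin a fold in place. In fact, in this paper blenders appear only in Theorem~B, precisely to supply the robust \emph{intersection} $W^{s}(\Lambda_{0,g})\cap W^{u}(\Lambda_{2,g})\neq\emptyset$ that closes the cycle; they play no role in making the tangency robust.

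What the paper actually uses for the robust tangency is the Newhouse thickness mechanism, and the key geometric step you are missing is Proposition~\ref{prop5.1}: given one non-degenerate strict tangency between a leaf of the unstable foliation of $\Lambda_{0}$ and a leaf of the stable foliation of $\Lambda_{2}$, there is a $C^{1}$ regular curve $\gamma_{g}$, varying continuously with $g$, along which \emph{every} point is a non-degenerate strict tangency between a leaf of one foliation and a leaf of the other. Robustness then reduces to a one-dimensional Cantor-set intersection problem on this curve: $\Lambda_{0}$ traces out a Cantor set $K_{0,g}$ and $\Lambda_{2}$ a Cantor set $K_{2,g}$ on $\gamma_{g}$, and the Palis--Viana renormalization (Proposition~\ref{prop4.1}) is used not to produce a blender but to make $\tau^{s}(\Lambda_{2})$ so large that $\tau^{u}(\Lambda_{0})\tau^{s}(\Lambda_{2})>1$ (see (\ref{newhouse condition})). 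Linkedness is arranged via the two-sidedness of the relevant leaves (Corollary~\ref{cor4.4}), and then the Gap Lemma forces $K_{0,g}\cap K_{2,g}\neq\emptyset$ for all $g$ in a neighborhood, each intersection point being an elliptic heterodimensional tangency. Also note one preliminary step you did not supply: Proposition~\ref{prop2.1} first unfolds the original elliptic tangency $Y$ into a $(d-2)$-sphere of transverse intersections \emph{and simultaneously} creates a homoclinic quadratic tangency of $P$ (this uses both parts of hypothesis (2), the nontransverse intersection $X\in W^{u}(P)\cap W^{s}(Q)$ being perturbed first so that $W^{u}(P)$ returns near $Y$); the homoclinic tangency is what feeds Palis--Viana, and the sphere, propagated as foliated unstable cylinders in Lemma~\ref{lem4.2}, is what lets one re-create a heterodimensional tangency now between $\Lambda_{0}$ and $\Lambda_{2}$ in Proposition~\ref{prop4.3}. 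Without the arc-of-tangencies plus Gap Lemma argument, your proposal does not actually establish robustness of the tangency.
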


The saddle periodic point $P$ in (1) is called \emph{sectionally dissipative} in \cite{PV94}, and  
$\alpha_{c}$ and $\beta_{c}$ are called  the \emph{real central contracting} and 
\emph{real central expanding} eigenvalues, respectively, on the cycle.  
Though the sectionally dissipative condition in (1) might not be indispensable 
if one would use \cite{R94} instead of \cite{PV94}, it is sufficient to prove Theorems \ref{theoremB} and 
\ref{theoremC}.
So we will work under the condition to avoid miscellaneous difficulties.
Note that the condition (1) implies that 
the heterodimensional tangency in the condition (2) is of strict type.

Theorem \ref{theoremA} says nothing about  
whether or not the $C^{2}$ robust heterodimensional tangency is in a heterodimensional cycle
associated with $\Lambda_{0,g}$ and $\Lambda_{2,g}$. 
However, one can obtain an affirmative answer to Question \ref{q_1} for $r\geq 2$ by supposing that the diffeomorphism in Theorem \ref{theoremA} has a certain basic set 
called a \emph{$cu$-blender} \cite[Definition 3.1]{BD12, BDK12}  which will be given in Section \ref{S7}.  

\begin{maintheorem}\label{theoremB}
There exists a
nonempty open set in $\Diff^{2}(M)$  each element of which 
has a heterodimensional tangency  of elliptic type on a heterodimensional cycle. 
\end{maintheorem}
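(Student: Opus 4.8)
The plan is to derive Theorem~\ref{theoremB} from a strengthened form of Theorem~\ref{theoremA} together with one explicit example; the open set asserted in Theorem~\ref{theoremB} will then be the set $\mathcal{O}$ that Theorem~\ref{theoremA} attaches to that example. The strengthening, which involves the $cu$-blender notion introduced in Section~\ref{S7}, should read: if, besides hypotheses (1) and (2) of Theorem~\ref{theoremA}, the diffeomorphism $f$ carries a $cu$-blender $\Gamma$ which is homoclinically related to $P$ and is positioned so that a branch of $W^{s}(Q)$ crosses its superposition region, then $\mathcal{O}$ can be chosen so that, for every $g\in\mathcal{O}$, the elliptic heterodimensional tangency associated with $\Lambda_{0,g}$ and $\Lambda_{2,g}$ lies in a $C^{2}$ robust heterodimensional cycle associated with the same pair of basic sets. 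The tangency already supplies a (non-transverse) point of $W^{u}(\Lambda_{0,g})\cap W^{s}(\Lambda_{2,g})$; for the remaining leg $W^{u}(\Lambda_{2,g})\cap W^{s}(\Lambda_{0,g})$, whose dimension count prevents it from being made robust by transversality alone, one uses the blender mechanism of \cite{BD12,BDK12} recalled in Section~\ref{S7}, together with the $\lambda$-lemma and the fact---visible from the construction underlying Theorem~\ref{theoremA}---that $\Lambda_{2,g}$ may be taken inside the homoclinic class of $P_{g}$, hence related to $\Gamma_{g}$. So it remains to construct a single $f_{0}\in\Diff^{2}(M)$ realizing all of these requirements.

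For this I would take $M$ a closed $3$-manifold, say $S^{3}$, and build $f_{0}$ inside a chart identified with a cube in $\mathbb{R}^{3}$, with coordinates $(x_{1},x_{2},x_{3})$, out of affine horseshoe pieces. Near a fixed point $P$, declare $f_{0}$ linear with eigenvalues $|\alpha_{1}|<|\alpha_{c}|<1<|\alpha_{u}|$, $|\alpha_{c}\alpha_{u}|<1$, the contracting directions being the $x_{1}$- and $x_{2}$-axes and the expanding one the $x_{3}$-axis, and complete this local picture to a horseshoe; this produces a nontrivial basic set $\Lambda_{1}\ni P$ of index one. Near a fixed point $Q$, declare $f_{0}$ linear with eigenvalues $|\beta_{s}|<1<|\beta_{c}|<|\beta_{1}|$, the contracting direction being the $x_{3}$-axis and the expanding plane the $x_{1}x_{2}$-plane, and complete it to a horseshoe; this produces a nontrivial basic set $\Lambda_{0}\ni Q$ of index two. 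Both periodic points then satisfy the eigenvalue inequalities in condition (1) of Theorem~\ref{theoremA} (here $\alpha_{c}$, $\beta_{c}$ are the central eigenvalues and $d-2=1$).

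Next I arrange the transitions between the pieces. Choosing the charts suitably, some iterate of $f_{0}$ carries a branch of $W^{u}(Q)$ to a surface $\{x_{3}=\varphi(x_{1},x_{2})\}$ near $P$, with $\varphi$ vanishing together with its first derivatives at the origin and with positive definite Hessian there, tangent at the origin to $W^{s}_{\loc}(P)=\{x_{3}=0\}$; this produces a heterodimensional tangency in $W^{u}(Q)\cap W^{s}(P)$, elliptic because the Hessian is definite. A second branch of $W^{u}(P)$---a curve---is steered so as to meet a point of the curve $W^{s}(Q)$; since $1+1<3$, this point is automatically a non-transverse intersection, so $f_{0}$ satisfies condition (2), and then, as noted after Theorem~\ref{theoremA}, the tangency is strict. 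Finally, in a further box inside the cube I install a $cu$-blender $\Gamma$---a blender-horseshoe of index one as in \cite{BD12,BDK12}---placed so that it is homoclinically related to $P$ (a robust situation, since the relevant heteroclinic intersections are transverse and $1+2=3=\dim M$) and so that a returning branch of $W^{s}(Q)$ crosses its superposition region. All of these local pieces can be arranged to occupy pairwise disjoint parts of the cube away from the finitely many heteroclinic orbits used, so they do not interfere; extending $f_{0}$ to a diffeomorphism of $M$ in the usual way gives an $f_{0}$ that satisfies conditions (1) and (2) of Theorem~\ref{theoremA} and the $cu$-blender hypothesis above, so the strengthened theorem applies and yields the desired open set.

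The step I expect to be the main obstacle is precisely this construction of $f_{0}$. One must simultaneously honour the eigenvalue inequalities of condition (1) at both periodic points, realize the \emph{elliptic} tangency---a definite quadratic fold---as the image under the dynamics of flat local invariant manifolds while controlling how the fold is distorted along the transition, and plant a correctly \emph{linked} $cu$-blender (homoclinically related to $P$ and meeting $W^{s}(Q)$ through its superposition region) without disturbing the two heteroclinic connections of condition (2). Checking that the blender machinery indeed upgrades the cycle to a $C^{2}$ robust one \emph{while keeping the elliptic heterodimensional tangency on it}---rather than merely producing a nearby robust cycle---is the delicate point, and it is exactly what the blender formalism recalled in Section~\ref{S7} is designed to handle.
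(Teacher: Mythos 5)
Your strategy of coupling Theorem~\ref{theoremA} with a blender to supply the second, fragile leg $W^u(\Lambda_{2,g})\cap W^s(\Lambda_{0,g})$ of the cycle is the right idea, but you place the blender on the wrong side, and the resulting transfer argument has a gap. The paper does not introduce an auxiliary blender $\Gamma$ near $P$: it takes $\Lambda_0$ \emph{itself} to be a blender-horseshoe of index $d-1$, requires a segment $L^u_f$ of $W^u(P,f)$ to be the core of a family of vertical disks $D_\delta(L^u_f)$ in the superposition region $\mathcal{D}$, and then (using $\Lambda_{2,g}\sim\Lambda_{1,g}$, Remark~\ref{remark6.1}) replaces $L^u_g$ by a $C^1$-close segment $\tilde L^u_g\subset W^u(\tilde P,g)$ with $\tilde P\in\Lambda_{2,g}$. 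The blender property gives $W^s_{\loc}(\Lambda_{0,g})\cap D_\delta(\tilde L^u_g)\neq\emptyset$ for all small $\delta$, and since $W^s_{\loc}(\Lambda_{0,g})$ is closed, letting $\delta\to 0$ yields $W^s_{\loc}(\Lambda_{0,g})\cap\tilde L^u_g\neq\emptyset$, i.e.\ the needed intersection $W^s(\Lambda_{0,g})\cap W^u(\Lambda_{2,g})\neq\emptyset$.

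Your version has two gaps. First, an index clash: in the terminology of Section~\ref{S7} a $cu$-blender has unstable index $d-1$, whereas a set $\Gamma$ homoclinically related to $P$ must have index one, so what you would need is a dual, $cs$-type blender activated through $W^u_{\loc}(\Gamma)$. Second, and substantively, the chain ``$W^s(Q)$ crosses the superposition region of $\Gamma$; $\Gamma\sim\Lambda_{2,g}$; $\lambda$-lemma; hence $W^u(\Lambda_{2,g})\cap W^s(\Lambda_{0,g})\neq\emptyset$'' does not hold as stated. The blender only gives $W^u_{\loc}(\Gamma_g)\cap W^s(Q_g)\neq\emptyset$, and the $\lambda$-lemma makes pieces of $W^u(\Lambda_{2,g})$ merely $C^1$-close to $W^u_{\loc}(\Gamma_g)$; $C^1$-proximity of one-dimensional curves does \emph{not} force them to meet the one-dimensional $W^s(Q_g)$ --- this is exactly the dimension defect that makes heterodimensional intersections non-generic in the first place. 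One could repair it by passing to a larger index-one basic set $\Lambda'_g$ containing $\Gamma_g$ and homoclinically related to $\Lambda_{2,g}$, and stating the robust cycle between $\Lambda_{0,g}$ and $\Lambda'_g$, but that step is absent from your sketch, and the paper's choice of siting the blender at $\Lambda_0$ (so that the blender itself absorbs $W^u(\Lambda_2)$ directly) avoids it entirely. Finally, the explicit model $f_0$ that you regard as the main obstacle is not part of the paper's proof of Theorem~\ref{theoremB}: the paper simply appends the blender-horseshoe and in-superposition hypotheses on $f$, with concrete realizations from partially hyperbolic saddle-node unfoldings deferred to Section~\ref{S8}.
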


Now we consider the relation between 
heterodimensional tangencies and the absence of dominating splittings.  
Trivially, if  a heterodimensional cycle contains a heterodimensional tangency, then  
there is no dominating splitting on the cycle.    
As mentioned above, in the $C^{1}$ topology, 
the existence of homoclinic tangencies and the absence of dominating splittings are 
synonymous in the sense of Wen \cite{W02}.  
In the $C^{2}$  topology, the result corresponding to that in \cite{W02} is not known yet.
Thus, the following question makes sense.

\begin{questioni}\label{q_2}
Can a diffeomorphism with a heterodimensional cycle 
which does not admit any  dominating splitting  be $C^{2}$-approximated by a diffeomorphism 
having  a heterodimensional tangency on a cycle?
\end{questioni}

In the following theorem, we will present open conditions on $C^{2}$ diffeomorphisms  
under which 
Question \ref{q_2} is solved affirmatively.

Let $\Lambda_{0}$ and $\Lambda_{1}$ be nontrivial basic sets
for $f\in \Diff^{2}(M)$ with $\index(\Lambda_{0})=d-1$ and $\index(\Lambda_{1})=1$, where 
$d=\dim(M)\geq 3$ . 
We suppose furthermore that 
$\Lambda_{0}$  is a blender-horseshoe
and 
$\Lambda_{1}$ is a \emph{sectionally dissipative} basic set 
\emph{with a real central contracting direction},
that is, 
every periodic point in $\Lambda_{1}$
satisfies the same condition as in (1) of Theorem \ref{theoremA}.

\begin{maintheorem}\label{theoremC}
Let $f$ be a diffeomorphism satisfying the above conditions.
If $f$ has a spherical heterodimensional intersection on the heterodimensional cycle
associated with $\Lambda_{0}$ and $\Lambda_{1}$ 
and satisfies 
the $C^2$ open conditions given in Section \ref{S8}, then  $f$ is $C^{2}$-approximated by a diffeomorphism having 
a $C^{2}$-robust cycle with a robust heterodimensional tangency.
\end{maintheorem}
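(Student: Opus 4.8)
The strategy is to reduce Theorem \ref{theoremC} to the setting already handled by Theorem \ref{theoremA}, using the additional structure coming from the blender-horseshoe $\Lambda_{0}$. First I would exploit the spherical heterodimensional intersection on the cycle associated with $\Lambda_{0}$ and $\Lambda_{1}$ to produce, after an arbitrarily $C^{2}$-small perturbation, a genuine \emph{heterodimensional tangency of elliptic type}. The point here is that a ``spherical'' intersection between $W^{u}(\Lambda_{0})$ and $W^{s}(\Lambda_{1})$ means the relevant local invariant manifolds meet along (a piece of) a sphere rather than transversally; shrinking or bending one of them by a controlled perturbation localizes the contact to a single point, and by generic choice of the perturbation one can arrange the Hessian of the contact function to be negative (or positive) definite, i.e.\ elliptic. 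The sectionally dissipative plus real-central-contracting hypothesis on every periodic point of $\Lambda_{1}$ guarantees that such a tangency, once created, is of strict type, matching the eigenvalue hypotheses (1) of Theorem \ref{theoremA}. Thus after this first perturbation we land at a diffeomorphism $f_{1}$, arbitrarily $C^{2}$-close to $f$, with saddle periodic points $Q\in\Lambda_{0}$, $P\in\Lambda_{1}$ satisfying (1) and (2) of Theorem \ref{theoremA}.

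Next I would invoke Theorem \ref{theoremA} directly at $f_{1}$: it yields a nonempty open set $\mathcal{O}\subset\Diff^{2}(M)$ with $f_{1}\in\overline{\mathcal{O}}$ such that every $g\in\mathcal{O}$ has a nontrivial index-one basic set $\Lambda_{2,g}$ with a $C^{2}$-robust elliptic heterodimensional tangency between $W^{s}(\Lambda_{2,g})$ and $W^{u}(\Lambda_{0,g})$. Since $f_{1}$ was taken $C^{2}$-close to $f$ and $\overline{\mathcal{O}}\ni f_{1}$, this already gives a diffeomorphism $C^{2}$-close to $f$ with a $C^{2}$-robust heterodimensional tangency. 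What remains is to upgrade ``robust tangency'' to ``robust tangency inside a robust cycle'', and this is exactly where the blender-horseshoe hypothesis on $\Lambda_{0}$ is essential, precisely as in the passage from Theorem \ref{theoremA} to Theorem \ref{theoremB}. The defining property of a $cu$-blender is that a whole $C^{1}$-open family of disks of the appropriate dimension is forced to intersect its local stable set; I would use this to show that $W^{u}(\Lambda_{2,g})$ — which, being one-dimensional, can be made to ``fall into'' the superposition region of the blender after a further perturbation if necessary, or does so automatically under the open conditions of Section \ref{S8} — robustly meets $W^{s}(\Lambda_{0,g})$. Combined with the robust tangency, which already supplies the robust intersection of $W^{u}(\Lambda_{0,g})$ with $W^{s}(\Lambda_{2,g})$, this closes a $C^{2}$-robust heterodimensional cycle between $\Lambda_{0,g}$ and $\Lambda_{2,g}$.

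The main obstacle I anticipate is the bookkeeping in the first step: one must create the elliptic tangency from the spherical intersection \emph{without destroying} the blender-horseshoe structure of $\Lambda_{0}$, the sectional dissipativity of $\Lambda_{1}$, or the other branch $W^{u}(\Lambda_{0})\cap W^{s}(\Lambda_{1})$ of the cycle that will ultimately be needed to engage the blender. This is a matter of choosing the support of the perturbation to be a small neighborhood of the spherical intersection point, disjoint from $\Lambda_{0}$, $\Lambda_{1}$, and from the relevant pieces of the other heteroclinic connection — feasible because basic sets and their local manifolds are compact and the intersection point lies in the ``wandering'' part away from them. The $C^{2}$ open conditions quoted from Section \ref{S8} are presumably designed to make exactly this localization, together with the eventual positioning of $W^{u}(\Lambda_{2,g})$ relative to the blender, a robust feature of the construction, so modulo those conditions the argument is a concatenation of the spherical-to-elliptic perturbation, Theorem \ref{theoremA}, and the blender mechanism; the content is in verifying that these three steps can be carried out compatibly and that the resulting open set has $f$ in its closure.
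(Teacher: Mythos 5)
Your high-level strategy — reduce to Theorem~\ref{theoremA}, then engage the blender as in Theorem~\ref{theoremB} — is the right one and matches the paper's plan. But the first step rests on a misreading of the hypothesis, and the proposal skips over the part of the reduction that contains nearly all the actual work.

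First, the definition. A \emph{spherical heterodimensional intersection} is a \emph{transverse} intersection: the paper defines it as $W^{u}(q_{1})\pitchfork W^{s}(p_{1})$ containing a $(d-2)$-sphere (this is consistent with dimensions since both manifolds are codimension one). It is not a degenerate contact along a sphere, so you cannot ``shrink the sphere to a point'' by a localized perturbation near it — a transverse sphere persists under small perturbation. The tangency in the paper's proof is not produced at the sphere at all; it is produced near the quasi-transverse intersection $\tilde X$ on the \emph{other} branch of the cycle (in $W^{u}(\Lambda_{1})\cap W^{s}(\Lambda_{0})$), by iterating foliated unstable cylinders $A_{n}^{u}$ extracted from the sphere forward along $W^{u}(\Lambda_{1})$, iterating stable cylinders $A_{n}^{s}$ backward along $W^{s}(\Lambda_{0})$, and then making a local perturbation near $\tilde X$ exactly as in Lemma~\ref{lem4.2} and Proposition~\ref{prop4.3}. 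So the mechanism is genuinely global along the cycle, not a local surgery at the sphere.

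Second, to invoke Theorem~\ref{theoremA} you need \emph{both} halves of condition (2): an elliptic heterodimensional tangency between $W^{u}(Q)$ and $W^{s}(P)$, \emph{and} a nontransverse intersection between $W^{u}(P)$ and $W^{s}(Q)$ with the \emph{same} periodic pair $P,Q$. Your proposal only addresses the tangency. The paper needs three separate perturbations precisely to coordinate this: (I) replace the generic points $q_{2},p_{2}$ by periodic ones while keeping the quasi-transverse intersection; (II) create the elliptic tangency $r$ between $W^{u}(\tilde q_{1})$ and $W^{s}(\tilde p_{1})$ using the cylinder scheme above; (III) create a quasi-transverse point $Y\in W^{u}(\tilde p_{1})\cap W^{s}(\tilde q_{1})$ by pushing a second segment $\hat L_{\tilde g}^{u}$ through the superposition region of the blender — and this last perturbation can avoid destroying the tangency $r$ only because of condition (iv) of Section~\ref{S8}, namely that the orbits of $\ell_{f}^{u}$ and $L_{f}^{u}$ are disjoint. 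That disjointness is not a side remark; it is the open condition that makes the whole scheme consistent. Your proposal attributes the Section~\ref{S8} conditions to ``localization away from $\Lambda_0,\Lambda_1$,'' but the real role of (iv) is to keep the third perturbation from undoing the second. Without identifying these points, the ``concatenation'' you describe does not close up into a proof.
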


\noindent
Here we say that $f$ has a \emph{spherical heterodimensional intersection} 
associated with $\Lambda_{0}$ and $\Lambda_{1}$ if there are $q_{1}\in \Lambda_{0}$ 
and $p_{1}\in \Lambda_{1}$ such that $W^{u}(q_{1})\pitchfork W^{s}(p_{1})$ contains  
a $(d-2)$-dimensional sphere.

Note that 
the cycle considered in Theorem \ref{theoremC} is 
critical in the sense of \cite{DR92}. 
Moreover,  in the case of $d=3$, D\'iaz, Nogueira and Pujals proved that 
the homoclinic classes of $\Lambda_{0}$ and $\Lambda_{1}$ in Theorem \ref{theoremC} do not admit any dominated splitting \cite[\S 2.3]{DNP06}.


We comment here briefly on problems related to our theorems.  
In the present paper, we have obtained an affirmative answer to Question \ref{q_1} 
about the existence of robust heterodimensional tangencies
in the $C^{r}$ topology for any $r\geq 2$.
Thus, it is natural to ask whether there exist robust heterodimensional tangencies in the $C^{1}$ topology.
An example is already known, 
which exhibits a $C^{1}$-robust \emph{homoclinic} tangency  in the dimension at least $3$, see \cite{As08}.   
However,  Moreira \cite{Mo11} showed that 
any two regular Cantor sets $K_1$ and $K_2$ have continuations $\tilde K_1$, $\tilde K_2$ 
which are arbitrarily $C^{1}$ close to the originals and disjoint to each other.
This implies that $C^{1}$-robust homoclinic tangencies
can not be possible for $2$-dimensional generic diffeomorphisms. 
Similarly, we think that 
\emph{$C^{1}$-robust heterodimensional tangencies 
could not be possible for generic diffeomorphisms in any dimension greater than two}.

\subsection*{Outline of Proof of Theorem \ref{theoremA}}
At the end of this section,  we will outline the proof of the main theorem.
The diffeomorphism $f$ in Theorem \ref{theoremA} has basic sets $\Lambda_0$, $\Lambda_1$ with 
$\mathrm{index}(\Lambda_0)=d-1$, $\mathrm{index}(\Lambda_1)=1$ and 
periodic points $Q\in \Lambda_0$ and $P\in \Lambda_1$ such that $W^u(Q,f)$ and $W^s(P,f)$ have a 
heterodimensional tangency $Y$ of elliptic type and $W^u(P,f)$ and $W^s(Q,f)$ have a nontransverse 
intersection $X$, see Figure \ref{fig1}.
First we perturb $f$ near $X$ slightly so that $W^u(P,f)$ returns near $Y$ via a neighborhood $U_Q$ of $Q$.
Perturb $f$ again in a small neighborhood $U_Y$ of $Y$ and get a diffeomorphism $g_1$ such that $W^u(P,g_1)$ and $W^s(P,g_1)$ have a 
homoclinic tangency in $U_Y$.
Moreover, the heterodimensional tangency $Y\in W^u(Q,f)\cap W^s(P,f)$ is broken and becomes a $(d-2)$-sphere $S_0^{d-2}$ in $W^u(Q,g_1)\pitchfork W^s(P,g_1)$ (Proposition \ref{prop2.1}).
By applying Palis-Viana's result in \cite{PV94} which is based on Palis-Takens \cite{PT93}, we have 
a diffeomorphism $g_2$ arbitrarily close to $g_1$ which has a new basic set $\Lambda_2$ in $U_Y$ homoclinically related to $\Lambda_1$ and such that its stable thickness $\tau^s(\Lambda_2)$ 
is very large (Proposition \ref{prop4.1}).
Let $A_0^u$ be a small tubular neighborhood of $S_0^{d-2}$ in $W^u(Q,g_2)$.
The iterated forward images of $A_0^u$ by $g_2$ converge to $W^u(P,g_2)$ and hence return to $U_Y$, see 
Figures \ref{fig41} and \ref{fig4.2}.
Then we perturb $g_2$ in $U_Y$ and get a diffeomorphism $g_3$ such that $W^u(q,g_3)$ and 
$W^s(p,g_3)$ have a heterodimensional tangency $r$ for some $q\in \Lambda_0$ and $p\in \Lambda_2$ 
(Proposition \ref{prop4.3}), see Figure \ref{fig4.2} again.
By Gap Lemma \cite{N79,PT93} together with the largeness of $\tau^s(\Lambda_2)$, one can show that,  
for any diffeomorphism $g$ sufficiently near $g_3$, $W^u(\Lambda_0,g)$ and $W^s(\Lambda_2,g)$ have a 
heterodimensional tangency in $U_Y$.
Theorem \ref{theoremA} follows directly from this fact.

Note that we start with a heterodimensional tangency associated with $Q\in \Lambda_0$ 
and $P\in \Lambda_1$.
However, our robust heterodimensional tangencies are associated with $\Lambda_{0,g}$, $\Lambda_{2,g}$ but not with $\Lambda_{0,g}$, $\Lambda_{1,g}$.

\section{Coexistence of spherical intersections and tangencies}\label{sec2}
Suppose that $M$ is a $C^2$ manifold of dimension $d\geq 3$.
The purpose of this section is to find an element of $\Diff^2(M)$ arbitrarily $C^{2}$ close to 
the diffeomorphism given in  Theorem \ref{theoremA} which has simultaneously a spherical heterodimensional  intersection and a homoclinic tangency.

Let $f$ be an element of $\Diff^2(M)$ with periodic points $P$ and $Q$ satisfying (1) and (2) in Theorem \ref{theoremA}.
For simplicity, we may suppose that 
$\mathrm{per}(P)=\mathrm{per}(Q)=1$
if necessary replacing $f$ by $f^n$ for some $n\in \mathbb{N}$
 and 
$f$ is $C^{2}$ linearizable in small neighborhoods $U_{P}$ of $P$ and $U_{Q}$ of $Q$. 
Then, by the condition (1), $f$ can be written  in $U_{P}$ as 
\begin{equation}\label{linearmap_P}
f(\boldsymbol{x},y, z)=(A_{s}\boldsymbol{x}, \alpha_{c}y,  \alpha_{u}z)
\end{equation}
where 
$\boldsymbol{x}=(x_{1},\cdots,x_{d-2})\in \mathbb{R}^{d-2}$,  
$y, z\in \mathbb{R}$ and 
$A_{s}$ is a regular $(d-2)$-matrix with eigenvalues $\alpha_{1}, \dots, \alpha_{d-2}$ 
satisfying
\begin{equation}\label{P-eigenvalues}
|\alpha_{1}|\leq  \cdots  \leq |\alpha_{d-2}| < |\alpha_{c}|<1<|\alpha_{u}|,\quad |\alpha_{c}\alpha_{u}|<1.
\end{equation}
On the other hand in $U_{Q}$, 
$f$ can be written as 
\begin{equation}\label{linearmap_Q}
f(x,y, \boldsymbol{z}) =(\beta_{s}x,\beta_{c}y, B_{u}\boldsymbol{z}), 
\end{equation}
where  
$x, y\in \mathbb{R}$,  
$\boldsymbol{z}=(z_{1},\cdots,z_{d-2})\in \mathbb{R}^{d-2}$ and 
$B_{u}$ is a regular $(d-2)$-matrix with eigenvalues $\beta_{1},\dots, \beta_{d-2}$ 
satisfying 
\begin{equation}\label{Q-eigenvalues}
|\beta_{s}|<1< |\beta_{c}| <|\beta_{1}| \leq  \cdots  \leq |\beta_{d-2}|.
\end{equation}

We say that a nontransverse intersection $X\in W^{u}(P)\cap W^{s}(Q)$  is \emph{quasi-transverse} if 
it satisfies 
$$
T_{X}W^{s}(Q)+T_{X}W^{u}(P)=T_{X}W^{s}(Q)\oplus T_{X}W^{u}(P),
$$
see \cite{DR01}. 
For the nontransverse intersection $X \in W^{u}(P)\cap W^{s}(Q)$ and the heterodimensional tangency $Y\in W^{s}(P)\cap W^{u}(Q)$ of elliptic type given in (2) of Theorem \ref{theoremA},
we may furthermore assume that  $f$ satisfies the following situations without loss of generality.
See Figure \ref{fig1}.

\begin{remark}[on linearizing coordinates]\label{rmk1}
\begin{enumerate}[(i)]
\item 
$X$ is located at 
$(1, 0,\mathbf{0})$ with respect to the linearizing coordinate on $U_{Q}$, 
where $\mathbf{0}=(0,\ldots,0)\in \mathbb{R}^{n-2}$.
Moreover, $T_{X} W^{u}(P)$ and the eigenspace associated with $\beta_{c}$ are linearly independent. 
\item 
$Y$ is located at  $(\boldsymbol{1}, 1,  0)$ 
with respect to the linearizing coordinate on $U_{P}$ where $\boldsymbol{1}=(1,\ldots,1)\in \mathbb{R}^{d-2}$.
\item 
There exist integers $N_{1}, N_{2}>0$ such that 
$\tilde{X}=f^{-N_{1}}(X)\in W^{s}_{\loc}(P)=( \mathbf{0}, 0, 1)$ and 
$\tilde{Y}=f^{-N_{2}}(Y)\in W^{u}_{\loc}(Q)=(0, 1, \boldsymbol{1})$.
\end{enumerate}
\end{remark}

\begin{figure}[hbt]
\centering
\scalebox{0.53}{\includegraphics[clip]{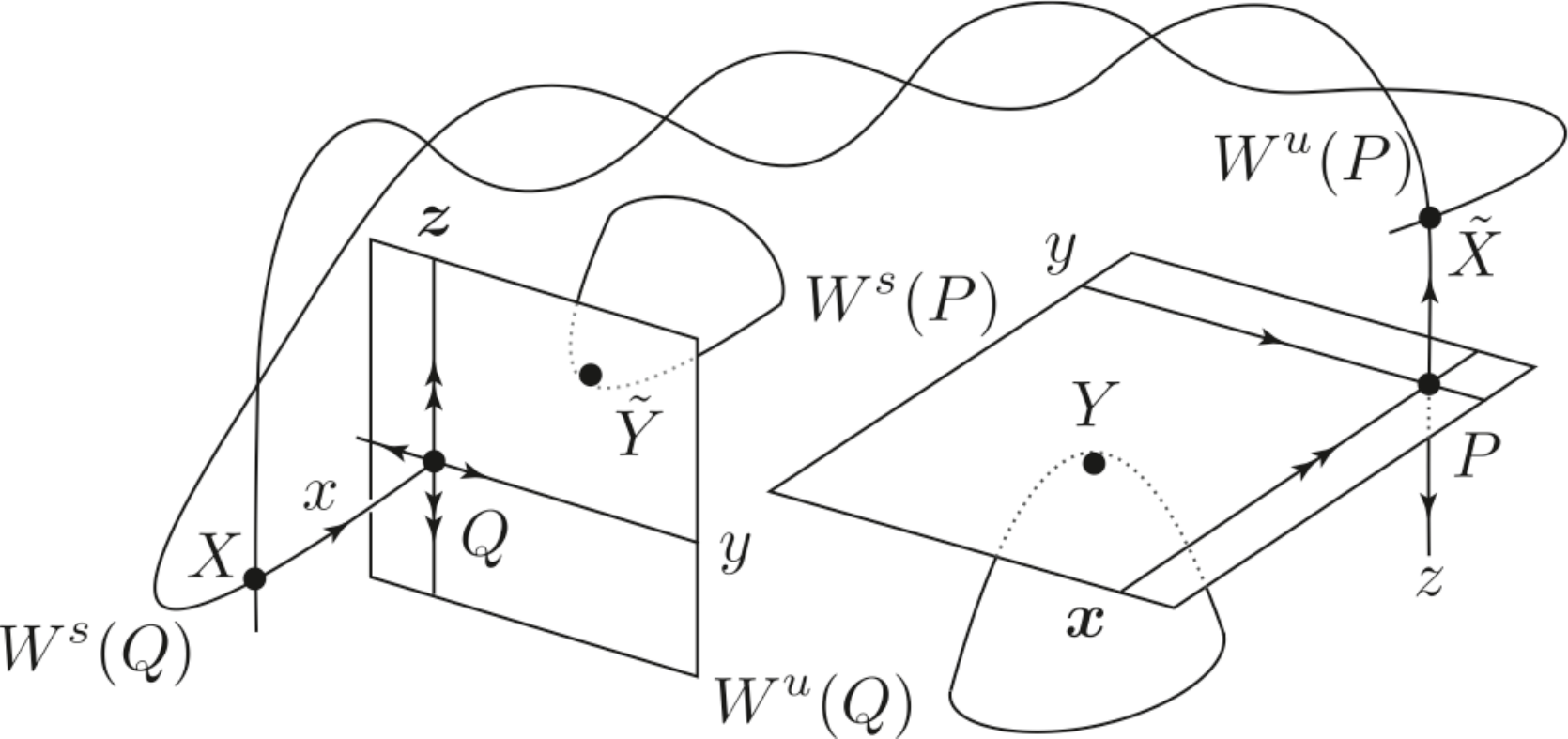}}
\caption{Heterodimensional tangency of elliptic type on a cycle.
}
\label{fig1}
\end{figure}

We now prove the following proposition under the linearizing coordinates 
of Remark \ref{rmk1}. 
Note that, even if $Y\in W^{ss}(P)$ and $\tilde Y\in W^{uu}(Q)$,  
one can show similarly the following proposition 
by using the linearizing  coordinate  
with $Y=(\boldsymbol{1}, 0,  0)$ and 
$\tilde{Y}=(0, 0, \boldsymbol{1})$ respectively.

\begin{proposition}\label{prop2.1}
There exists a $g_1\in \Diff^{2}(M)$ arbitrarily $C^{2}$ close to the above $f\in \Diff^{2}(M)$ such that 
$g_1$ has simultaneously the transverse intersection 
$W^{s}(P, {g_1})\pitchfork W^{u}(Q, {g_1})$ containing  a $(d-2)$-sphere in $W^{u}(Q, {g_1})$
and a quadratic homoclinic tangency associated with $P$.
\end{proposition}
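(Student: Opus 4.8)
The plan is to construct $g_1=\psi_1\circ\psi_0\circ f$ as the composition of $f$ with two $C^2$-small perturbations: $\psi_0$, supported in a small ball $B_X$ about $X$, which re-routes the branch of $W^u(P)$ through $X$ so that it returns into a small neighbourhood $U_Y$ of $Y$ along $U_Q$; and $\psi_1$, supported in $U_Y$, which at once unfolds the elliptic tangency $Y$ into a transverse $(d-2)$-sphere and bends the returning branch into a quadratic tangency with $W^s(P)$. I keep small neighbourhoods of $P$ and of $Q$ untouched by $\psi_0$ and $\psi_1$, take $B_X$ and $U_Y$ so small that they and the relevant finite orbit segments are in general position and $f(U_Y)\cap U_Y=\emptyset$, and I work in the linearizing coordinates of Remark~\ref{rmk1}: in $U_Q$, $f(x,y,\boldsymbol z)=(\beta_s x,\beta_c y,B_u\boldsymbol z)$ with $W^s_{\loc}(Q)$ the $x$-axis and $W^u_{\loc}(Q)=\{x=0\}$; near $Y$, coordinates $(x_1,\dots,x_d)$ with $W^s(P)=\{x_d=0\}$ and $W^u(Q)=\{x_d=u(x_1,\dots,x_{d-1})\}$, where $u(\mathbf 0)=0$, $\nabla u(\mathbf 0)=0$ (strict type) and, after possibly reversing the $x_d$-axis, $Hu(\mathbf 0)$ is positive definite.

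For the first perturbation, fix a large integer $N$ and put $w_N=(1,\beta_c^{-N},B_u^{-N}\boldsymbol 1)$ in $U_Q$-coordinates. Then $w_N$ lies at distance $O(|\beta_c|^{-N})$ from $X=(1,0,\mathbf 0)$, whereas $f^N(w_N)=(\beta_s^N,1,\boldsymbol 1)$ is $O(|\beta_s|^N)$-close to $\tilde Y=(0,1,\boldsymbol 1)\in W^u_{\loc}(Q)$. Letting $\Gamma_0$ denote the branch of $W^u(P)$ through $X$, a perturbation of $C^0$-size $O(|\beta_c|^{-N})$ inside $B_X$ suffices to push $\psi_0(\Gamma_0)$ through $w_N$ with a prescribed tangent direction, and it can be made $C^2$-small by letting the radius of $B_X$ shrink slowly enough as $N\to\infty$. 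Since $g_1=f$ along the forward orbit $w_N,\dots,f^{N+N_2-1}(w_N)$, the set $A_0:=f^{N+N_2}(\sigma)$, where $\sigma$ is a short sub-arc of $\psi_0(\Gamma_0)$ about $w_N$, is an arc of $W^u(P,\psi_0\circ f)$ lying in $U_Y$ near $Y$. Using the $C^2$-linearizations near $P$ and $Q$, the contraction $|\beta_s|<1$ and the eigenvalue gaps $|\beta_c|<|\beta_i|$, $A_0$ is $C^2$-close, within $O(|\beta_s|^N)$, to the piece of $W^u(Q)$ near $Y$, and its projection $\gamma(\cdot)$ to $\{x_d=0\}$ is a nearly straight arc crossing an $O(|\beta_s|^N)$-neighbourhood of $\mathbf 0$ with $|\dot\gamma|$ bounded away from $0$ (here $T_YW^u(Q)=\{x_d=0\}$, i.e.\ strictness of $Y$, is used). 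After an additional $O(|\beta_s|^N)$-adjustment of $\psi_0$ if necessary, I may assume $\varepsilon:=\min_{A_0}x_d>0$.

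For the second perturbation, since $f(U_Y)\cap U_Y=\emptyset$ and $U_Y$ misses the relevant orbit segments, a direct inspection of the orbits gives, near $Y$, $W^s(P,g_1)=\{x_d=0\}$, $W^u(Q,g_1)=\psi_1(W^u(Q))$ and $W^u(P,g_1)\supset\psi_1(A_0)$. I take $\psi_1$ to be a downward translation of amplitude $\varepsilon$ in the $x_d$-direction near $Y$, cut off inside $U_Y$; it is $C^2$-small because $\varepsilon=O(|\beta_s|^N)$. Then $W^u(Q,g_1)=\{x_d=u(x_1,\dots,x_{d-1})-\varepsilon\}$ near $Y$ meets $W^s(P,g_1)=\{x_d=0\}$ along $\{u=\varepsilon\}$, which for small $\varepsilon$ is a $(d-2)$-sphere contained in $W^u(Q,g_1)$ by positive-definiteness of $Hu(\mathbf 0)$, the intersection being transverse (hence $C^1$-robust) since $\nabla u\neq\mathbf 0$ there. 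Simultaneously, along $\psi_1(A_0)$ one has $x_d=u(\gamma(t))-\varepsilon+O(|\beta_s|^N)$ in the $C^2$ sense, so the $x_d$-coordinate attains its minimum $0$ at the point $r$ over (approximately) $\mathbf 0$, where $\frac{d^2}{dt^2}x_d=\dot\gamma^{\top}Hu(\mathbf 0)\dot\gamma+O(|\beta_s|^N)\neq 0$ for $N$ large; hence $\psi_1(A_0)$ is quadratically tangent to $W^s(P,g_1)$ at $r$. Thus $g_1$ carries simultaneously a $(d-2)$-sphere in $W^s(P,g_1)\pitchfork W^u(Q,g_1)$ lying in $W^u(Q,g_1)$ and a quadratic homoclinic tangency associated with $P$, as required.

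The step I expect to be the main obstacle is the $C^2$-control in the second paragraph: one must verify that the returning arc $A_0$ is $C^2$-close (and not only $C^0$-close, with the stated $O(|\beta_s|^N)$ bound) to $W^u(Q)$ near $Y$, so that the curvature term $\nabla u(\gamma)\cdot\ddot\gamma$ and the shadowing error are genuinely dominated by $\dot\gamma^{\top}Hu(\mathbf 0)\dot\gamma$, and that $\gamma$ really sweeps across $\mathbf 0$ rather than merely passing near $W^u(Q)$ somewhere else. This rests on the eigenvalue inequalities in (1) of Theorem~\ref{theoremA} (in particular the central-contraction conditions governing the passages near $P$ and $Q$) and on the $C^2$ estimates underlying \cite{PT93,PV94}; and it is precisely the \emph{elliptic} type of $Y$ that forces $\dot\gamma^{\top}Hu(\mathbf 0)\dot\gamma\neq 0$ for every admissible $\dot\gamma\neq\mathbf 0$, which is what makes the resulting homoclinic tangency non-degenerate.
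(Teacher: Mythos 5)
Your proof follows essentially the same two-stage strategy as the paper's: first a small perturbation $\psi_0$ near $X$ (the paper's $\nu$-parameter) that produces a branch of $W^{u}(P)$ re-entering a small neighbourhood of $Y$ by shadowing $W^{u}_{\loc}(Q)$, and then a small perturbation $\psi_1$ near $Y$ in the transverse $z$-direction (the paper's $\mu$-parameter) chosen so as to simultaneously unfold the elliptic tangency at $Y$ into a $(d-2)$-sphere in $W^{u}(Q)\pitchfork W^{s}(P)$ and to produce a quadratic tangency between the returning arc and $W^{s}(P)$. The key observation — that, since the returning arc $A_0$ is exponentially close to $W^{u}(Q)$, a single vertical translation near $Y$ can be tuned to accomplish both — is exactly the IVT argument of the paper.

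Where you diverge is in the handling of the alignment case. The paper works with a rigid two-parameter family $\{f_{\mu,\nu}\}$, fixes $\nu_n=|\beta_c|^{-n}$, and then splits into two cases according to whether the returning arc $\hat\ell_{\nu_n}$ is disjoint from $W^{s}_{\loc}(P)$ or already crosses it; in the latter case the quadratic tangency is created against a different leaf $H_{\mu,\nu}$ of $W^{s}(P)$ lying slightly above or below $W^{s}_{\loc}(P)$ (which exists because $\Lambda_1$ is nontrivial and $P$ is a limit of intersections of $W^s(P)$ with $W^u_{\loc}(P)$). You instead try to avoid the second case entirely, by re-tuning $\psi_0$ so that $\varepsilon=\min_{A_0}x_d>0$. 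The idea is sound: one can shift the projection $\gamma$ tangentially so that it misses the sublevel set $\{u\le C|\beta_s|^{N}\}$, which by ellipticity forces $\min_t u(\gamma(t))>C|\beta_s|^{N}$ and hence $\varepsilon>0$. But as stated the quantitative claim is off. The sublevel set is an ellipsoid of radius $\sim|\beta_s|^{N/2}$ in $\{x_d=0\}$, so the required tangential shift of $\gamma$ is of order $|\beta_s|^{N/2}$, not $|\beta_s|^{N}$; correspondingly the $\psi_0$-adjustment near $X$ has size $\sim|\beta_s|^{N/2}|\beta_c|^{-N}$, which is indeed tiny (and tiny relative to the $O(|\beta_c|^{-N})$ size of $\psi_0$ itself), but it is neither ``$O(|\beta_s|^N)$'' nor obvious without the argument above. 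You should also note explicitly that such an adjustment shifts $\gamma$ only tangentially to $\{x_d=0\}$ (because the $y$- and $\boldsymbol{z}$-directions at $\tilde Y$ map under $f^{N_2}$ into $T_Y W^u(Q)=\{x_d=0\}$), so it does not spoil the first-order strictness but precisely controls the sign of $\varepsilon$ through the second-order term. With that amendment your proof is correct, and is a mild streamlining of the paper's argument in that it never invokes the auxiliary stable leaves $H_{\mu,\nu}$ and hence does not use the nontriviality of $\Lambda_1$ at this stage.
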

Here a $(d-2)$-\emph{sphere} in $W^{u}(Q, {g_1})$ 
means the  
boundary of a $C^2$ embedded $(d-1)$-disk in $W^u(Q,g_1)$.

For the proof of the proposition, let us 
prepare a suitable parameterized family in $\Diff^{2}(M)$ containing $f$. 
For a sufficiently small $\delta>0$,
let $U_{X}$ and $U_{Y}$ 
be  the $2\delta$-neighborhoods 
of $X=(1,0,\mathbf{0})$ and $Y=(\mathbf{1},1,0)$ respectively.
To define local perturbations of $g_1$ in  
$U_{X}$  and $U_{Y}$, 
consider the functions on $U_X$, $U_Y$ defined as  
\begin{equation*}
H_{X}(x,y,\boldsymbol{z}) = h(x-1)  h(y) \prod_{i=3}^{n}h(z_{i}) ,\
H_{Y}(\boldsymbol{x},y,z) = \prod_{i=1}^{n-2}h(x_{i}-1) h(y-1) h(z),
\end{equation*}
where $h$ is a $C^{2}$ bump function  on $\mathbb{R}$ 
satisfying 
$$
\begin{cases}
h(t)= 0\ & \mbox{if}\  2\delta \leq \vert t\vert; \\
0< h(t) <1\ & \mbox{if}\  \delta<|t|<2\delta;\\
h(t)= 1\  &  \mbox{if}\  \vert t\vert  \leq \delta.
\end{cases} 
$$
Fix $\delta_0>0$ which is sufficiently smaller than $\delta$, e.g.\ $\delta_0=\delta/100$.
Let $\{\varphi_{\mu,\nu}\}$ $(-\delta_0<\mu,\nu<\delta_0)$ be the family  of perturbations in $\Diff^{2}(M)$ given by 
\begin{equation}\label{perturb1}
\begin{cases}
\varphi_{\mu,\nu}(x,y,\boldsymbol{z})=(x, y+\nu H_{X}(x,y,\boldsymbol{z}), \boldsymbol{z} ) & \text{if $(x,y,\boldsymbol{z})\in U_{X}$}\\
\varphi_{\mu,\nu}(\boldsymbol{x},y,z)= (\boldsymbol{x}, y, z+\mu H_{Y}(\boldsymbol{x},y,z)) & \text{if  $(\boldsymbol{x},y,z)\in U_{Y}$}\\
\varphi_{\mu,\nu}=\mathrm{id}_{M\setminus U_X\cup U_Y} & \text{otherwise.}
\end{cases} 
\end{equation}
Using the $\{\varphi_{\mu,\nu}\}$, we define the $2$-parameter family $\{f_{\mu,\nu}\}$ by 
\begin{equation}\label{eqn5}
f_{\mu,\nu}=\varphi_{\mu,\nu}\circ f.
\end{equation}
For the definition, it is clear that $f_{\mu,\nu}\to f$ in the $C^{2}$ topology as $\mu,\nu\to 0$. 

\begin{remark}[about notations]
\begin{itemize}
\item Since $P\not\in U_{X}$ and $Q\not\in U_{Y}$, 
the continuations of these points 
satisfy $P_{\mu, \nu}=P$ and $Q_{\mu, \nu}=Q$ for every $\mu,\nu$ with $-\delta_0<\mu, \nu<\delta_0$.
\item The (global) unstable and stable manifolds for $f_{\mu,\nu}$ of these saddle points are 
denoted by $W^{u}(P, f_{\mu,\nu})$, $W^{s}(P, f_{\mu,\nu})$ and so on. 
However, since the local unstable and stable manifolds in $U_{P}$ and $U_{Q}$ 
do not affected by the perturbations, these manifolds may be denoted by the notations same as the originals, e.g.,  
$W^{u}_{\loc}(P)$, $W^{s}_{\loc}(P)$ and so on.
\item For a small $\eta>0$, the 
$\eta$-neighborhoods of $Y$ in $W^u(Q)$ and $\tilde Y$ in $W^s(P)$ are denoted by $D_Y^u$ and 
$D_{\tilde Y}^s$ respectively.
\item
Since continuations of $D^{u}_{Y}$ in $W^u(Q,f_{\mu,\nu})$ vary with respect to $\mu$, $\nu$, 
they should be denoted by $D^{u}_{Y}(f_{\mu,\nu})$. 
\end{itemize}
\end{remark}

\begin{proof}[Proof of Proposition \ref{prop2.1}]
Let $\{f_{\mu,\nu}\}$ be the above $2$-parameter family with $f_{0,0}=f$.
The component $L$ of $W^{u}(P,f)\cap U_{X}$ containing $X$ is a segment not parallel to 
the $y$-axis of the linearizing coordinate of $U_Q$, see in Remark \ref{rmk1}-(i).
We move $L$ by the perturbation (\ref{perturb1}) with respect to $\nu$ and 
have a continuation $L_\nu$ of $L$ such that $L_{\nu}\subset W^{u}(P, f_{0,\nu})$
and  $L_{\nu}\to L$ as $\nu\to 0$. 
Here $L_{\nu}\to L$ means that $L_{\nu}$ $C^{2}$-converges to $L$.

For an arbitrarily small $\varepsilon>0$, take an integer $n_{0}>0$ 
satisfying  
$$
  |\beta_{c}|^{-n_{0}}<\varepsilon/2\quad\mbox{and}\quad K|\beta_{s}|^{n_{0}}<\varepsilon/2,
$$
where
$K=\|Df^{N_{2}}(\tilde{Y})\|$. 
Moreover, one can take the $n_{0}$ so that, 
for any $n>n_{0}$ and $\nu_{n}:=|\beta_{c}|^{-n}$, 
there exists a segment $\ell_{\nu_{n}}\subset L_{\nu_{n}}$ 
such that $f_{0,\nu_{n}}^{n}(\ell_{\nu_{n}})$ is a component of $f_{0,\nu_{n}}^{n}(L_{\nu_{n}})\cap U_Q$ 
satisfying
$$
\mathrm{dist}(f_{0,\nu_{n}}^{n}(\ell_{\nu_{n}}), \tilde{Y})<\varepsilon\quad\mbox{and}\quad  
\mathrm{angle}(
T f_{0,\nu_{n}}^{n}(\ell_{\nu_{n}}), 
T W_{\loc}^{u}(Q))<\varepsilon,
$$
where the distance and angle are defined with respect to the Euclidean metric on $U_{Q}$  
induced by the linearizing coordinate, see Figure \ref{fig2}.
We may assume that $D_Y^u(f_{0,\nu_n})\setminus \{Y\}$ is contained in the region of $U_P\setminus W_{\loc}^s(P)$ with $z< 0$.
\begin{figure}[hbt]
\centering
\scalebox{0.59}{\includegraphics[clip]{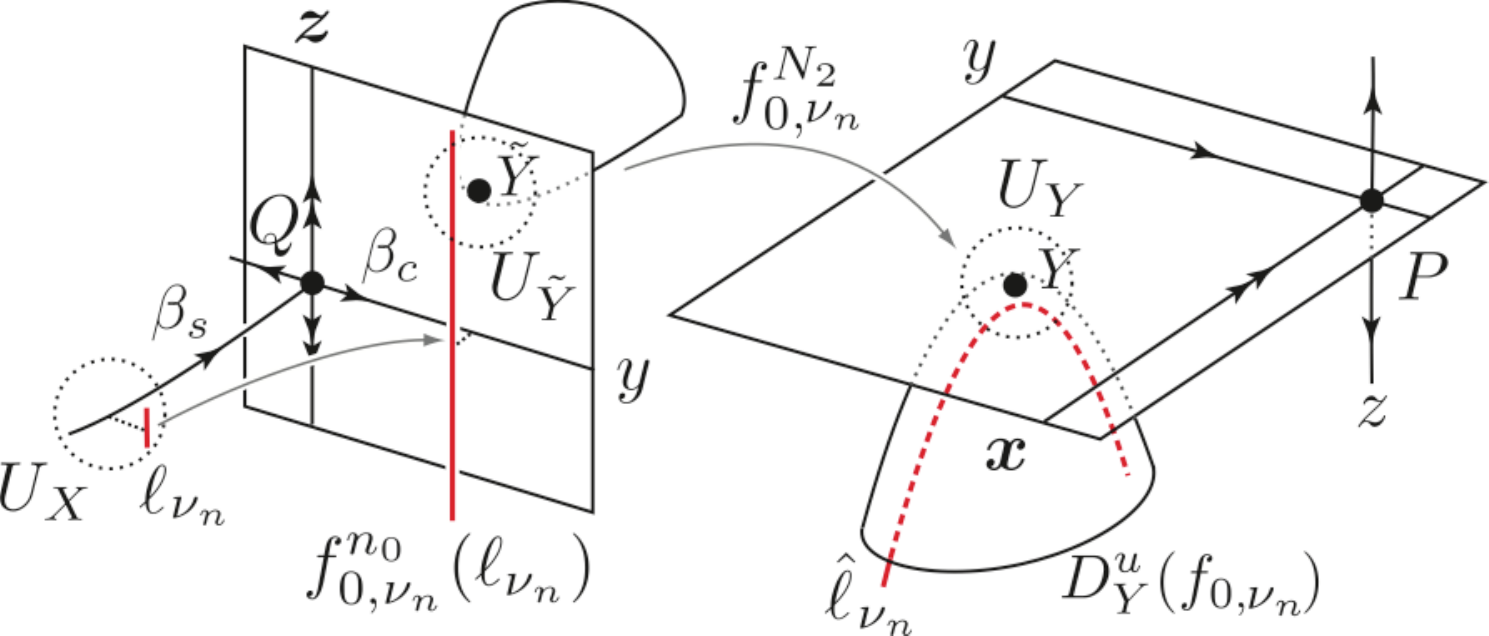}}
\caption{Transitions 
from $U_{X}$ to $U_{\tilde{Y}}$ and  $U_{\tilde{Y}}$ to $U_{Y}$.
}
\label{fig2}
\end{figure}

We set $\hat{\ell}_{\nu_{n}}
:=f_{0,\nu_{n}}^{N_{2}}\circ f_{0,\nu_{n}}^{n}(\ell_{\nu_{n}})$ 
and 
first consider the case that $\hat\ell_{\nu_{n}}$ is disjoint from $W_{\loc}^s(P)$ 
 as in Figure \ref{fig2}.
Then, we will adjust a value of the other parameter $\mu$ as follows so that the claim of this proposition holds.
Let $\hat{\ell}_{\mu,\nu_{n}}$ be 
a continuation of $\hat{\ell}_{\nu_{n}}$ such that $\hat{\ell}_{\mu,\nu_{n}}\subset W^{u}(P, f_{\mu,\nu_{n}})$ and  
$\hat{\ell}_{\mu,\nu_{n}}\to \hat{\ell}_{\nu_{n}}$ as $\mu\to 0$.
By the perturbation (\ref{perturb1}) with respect to $\mu$, any point in $U_{Y}$ close to $Y$ 
is just moved in parallel with the $z$-axis of $U_{P}$.
Thus, 
by the Intermediate Value Theorem,  
there exists a $\mu_{n}$ such that 
\begin{itemize}
\item
$0<|\mu_{n}|<  2K |\beta_{s}|^{n}$;
\item $\hat{\ell}_{\mu_{n},\nu_{n}}$ and  $W^{s}_{\loc}(P)$ have a quadratic tangency;
\item
$D^{u}_{Y}(f_{\mu_{n},\nu_{n}})$ meets $W^{s}_{\loc}(P)$ nontrivially and transversely. 
\end{itemize}
Moreover, since $Y$ is a tangency of elliptic type (Remark \ref{rmk1}-(ii)), 
$D^{u}_{Y}(f_{\mu_{n},\nu_{n}})\pitchfork W^{s}_{\loc}(P)$
is 
a  $(d-2)$-sphere in $W^{u}(Q, f_{\mu_{n},\nu_{n}})\cap W^{s}_{\loc}(P)$.

Next we consider the case that $\hat \ell_{\nu_n}$ intersects $W_{\loc}^s(P)$.
For $-1\leq a<b\leq 1$, let $A_{a,b}$ be the open subset of $U_P$ defined 
as $A_{a,b}=\{(\boldsymbol{x},y,z)\in U_P\,;\, a<z<b\}$.
Since $P$ is contained in the basic set $\Lambda_1$, for any $0<t<1$, there exists a continuation 
of subsurfaces $H_{\mu,\nu}$ of $W^s(P,f_{\mu,\nu})\cap U_P$ which are almost horizontal and contained in 
either $A_{0,t}$ or $A_{-t,0}$ for any $(\mu,\nu)$ sufficiently near $(0,0)$.
Since $\hat \ell_{\nu_n}$ converges to an arc in $D_Y^u$ as $n\to \infty$, 
$\hat \ell_{\nu_n}$ is contained in $A_{-1,t}$ and moreover disjoint from 
$H_{0,\nu_n}$ in the case of $H_{0,\nu_n}\subset A_{0,t}$ for all sufficiently large $n$.
When $H_{0,\nu_n}\subset A_{0,t}$, there exists $\mu_n$ with $0<\mu_n<t$ such that $\hat \ell_{\mu_n\nu_n}$ and $H_{\mu_n,\nu_n}$ have a quadratic tangency and $D^{u}_{Y}(f_{\mu_{n},\nu_{n}})
\pitchfork W_{\loc}^s(P)$ is a $(d-2)$-sphere, see Figure \ref{fig2_0} (a).
\begin{figure}[hbt]
\centering
\scalebox{0.65}{\includegraphics[clip]{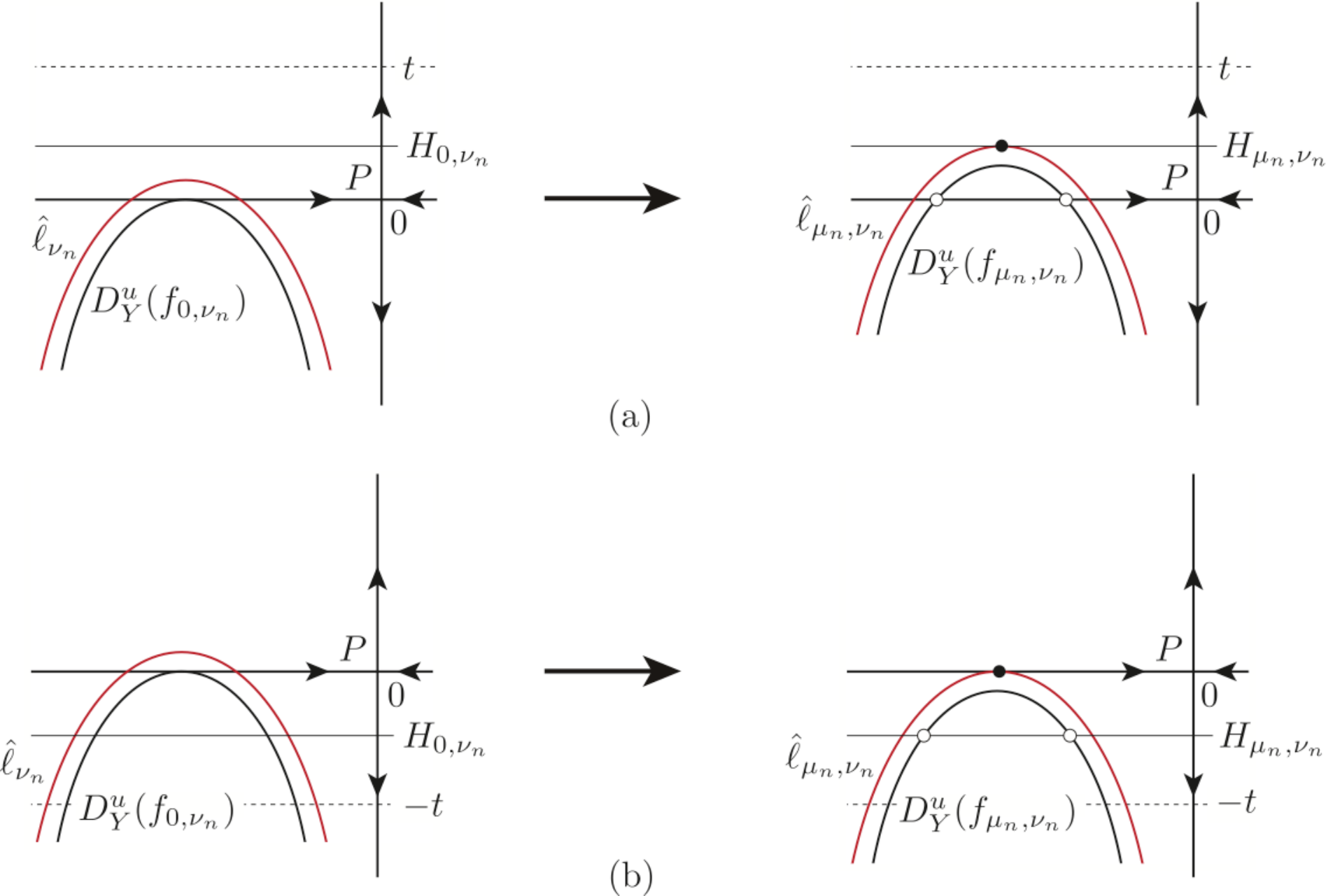}}
\caption{The black dots represent homoclinic tangencies and the pairs of white dots 
represent $(d-2)$-spheres.
}
\label{fig2_0}
\end{figure}
When $H_{0,\nu_n}\subset A_{-t,0}$, one can choose $\mu_n$ with $-t<\mu_n<0$ so that $\hat \ell_{\mu_n\nu_n}$ and $W_{\loc}^p(P)$ have 
a quadratic tangency and $D^{u}_{Y}(f_{\mu_{n},\nu_{n}})\pitchfork H_{\mu_n,\nu_n}$ 
is a $(d-2)$-sphere, see Figure \ref{fig2_0} (b). 
Since $t$ can be taken arbitrarily small, we may choose $\mu_n$ so that  
$\lim_{n\to \infty}\mu_n=0$.

Thus, in ether case, $g_1:=f_{\mu_{n},\nu_{n}}$ satisfies our desired conditions.
This completes the proof.
\end{proof}

\section{Stable and unstable thicknesses}\label{S3}
In this section, we will recall the definition of the thickness given in Newhouse \cite{N79} for a 
Cantor set $K$ in $\mathbb{R}$.
Let $I$ be the minimal interval containing $K$.
A \textit{gap} of $K$ is a connected component of $I\setminus K$.
An ordering $\mathcal{G}=\{G_n\}$ of the gaps is called a \emph{presentation} of $K$.
For any $x\in \partial G_n$, the $\mathcal{G}$-component of $K$ at $x$ is the connected component $C$ of 
$I\setminus (G_1\cup\cdots\cup G_n)$ containing $x$.
For each such $x$, set $\tau(K,\mathcal{G},x)={\mathrm{Length}(C)}/{\mathrm{Length}(G_n)}$.
Then the \emph{thickness} of $K$ is given by
$$\tau(K)=\sup_{\mathcal{G}}\inf_x\tau(K,\mathcal{G},x),$$
where the infimum is taken over all boundary points of gaps of $K$.
The \emph{local thickness} of $K$ at $x\in K$ is defined as
$$\tau(K,x)=\limsup_{\varepsilon\to 0}\bigl\{\tau(L);\,L\subset K\cap [x-\varepsilon,x+\varepsilon] \text{ a Cantor set}\bigr\}.$$

The notion of thickness can be extended to that of a nontrivial basic set $\Lambda=\Lambda_{\varphi}$ of index $1$ as follows.
Let $z$ be a point of $W_{\loc}^{s}(\Lambda)$ and 
$\pi:I\to M$ a $C^{1}$ embedding transverse to $W_{\loc}^{s}(\Lambda)$ at $z =\pi(0)$, where $I$ is a closed interval containing $0$ as an interior point. 
Actually, $C^{1}$ projections along leaves of the stable $C^{1}$ foliation of $\Lambda$ 
can be used to define  $\pi$. 
The \emph{local stable thickness} of $\Lambda$ at $z$ is 
$\tau^{s}(\Lambda,z):=\tau(\pi^{-1}(W_{\loc}^{s}(\Lambda)), 0)$. 
Note that 
$\tau^{s}(\Lambda,z)$ is independent of the choice of $\pi$ 
and 
has the identical value for every $z\in W_{\loc}^{s}(\Lambda)$ which is a strictly positive finite number.
Thus we may denote it simply by $\tau^{s}(\Lambda)$
and call the (\emph{local}) \emph{stable thickness} of $\Lambda$.
On the other hand, it depends continuously on the diffeomorphism, that is, 
for any $\varphi_{1}$ sufficiently $C^{2}$-close to $\varphi_{2}$,  $|\tau^{s}(\Lambda_{\varphi_{1}})-\tau^{s}(\Lambda_{\varphi_{2}})|$ is smaller than a given positive constant,
see \cite[\S 4.3]{PT93}.
The \emph{local unstable thicknesses}  $\tau^{u}(\Lambda')$ of a basic set $\Lambda'$ with 
$\mathrm{index}(\Lambda')=d-1$ is defined similarly.

Let $\Lambda$, $\Gamma$ be nontrivial basic sets  with 
\begin{equation}\label{index-condition}
\mathrm{index}(\Lambda)=d-1,\quad \mathrm{index}(\Gamma)=1, 
\end{equation}
and let $\gamma:I\to M$ be a $C^1$ embedding transverse to $W^u(p)$ and $W^s(q)$ for some points $p\in \Lambda$ and $q\in \Gamma$.
Since $\Lambda$ is a basic set, there exists a Cantor set $K^u$ in some open subinterval $J$ of $I$ and an open segment $\alpha$ in $W^s(p,\Lambda)$ such that $(\gamma(J),\gamma(K^u))$ is $C^1$ diffeomorphic to 
$(\alpha,\Lambda\cap \alpha)$ along an unstable foliation associated with $W^u(\Lambda)$.
A Cantor set $K^s$ in $I$ is defined similarly from $\Gamma$.
We say that $K^u$ and $K^s$ are \emph{linked} if 
$K^s$ is not contained in a single component of $\mathbb{R}\setminus K^u$ and $K^u$ is not contained in a 
single component of $\mathbb{R}\setminus K^s$. 
By the Gap Lemma (\cite{N79,PT93}), if $\tau^u(\Lambda)\tau^s(\Gamma)>1$ and $K^u$ and $K^s$ are linked, then $K^u\cap K^s\neq \emptyset$.

Note that the above explanation could not be applied directly to 
 the basic sets without the condition (\ref{index-condition}). 
But, under the (codimension-one) sectionally dissipative condition,
one can define the stable thickness of $\Lambda$ with the same property as above
by using intrinsically $C^{1}$ projections along the leaves of an intrinsically $C^{1}$ 
foliation of $\Lambda$, see  \cite[\S 2-4]{PV94}  for details.

\section{Palis-Viana's setting and heterodimensional tangencies}\label{sec4}
In what follows, to simplify notations,
we denote continuations of the saddles $P$, $Q$ and the basic sets $\Lambda_0$, $\Lambda_1$ of every perturbed diffeomorphism  near  $f$ by the same notations as the original ones if it does not cause 
any confusion.

Let us start with the $C^{2}$ diffeomorphism $g_1$ which is arbitrarily $C^{2}$ close to
$f$ given in Proposition \ref{prop2.1} and satisfies the setting in Palis-Viana \cite{PV94}.  
In fact, 
$g_1$ has a quadratic homoclinic tangency in $U_{Y}$ 
associated with the saddle point $P$ satisfying the   
sectionally dissipative condition (\ref{P-eigenvalues}).  
By Palis-Viana \cite{PV94}, we have the following result.

\begin{proposition}[Steps 1-3 in {\cite[\S 7]{PV94}}]\label{prop4.1}
There exists a 
$g_{2}\in \Diff^{2}(M)$ arbitrarily $C^{2}$ close to $g_1$ which has  
a basic set $\Lambda_{2}$ of index $1$ other than $\Lambda_1$ such that 
\begin{enumerate}[\rm (1)]
\item 
$\tau^{u}(\Lambda_{1})\tau^{s}(\Lambda_{2})>1$;
\item $\Lambda_{1}$ and $\Lambda_{2}$ are homoclinically related;
\item
There are periodic points $P_{1}\in \Lambda_{1}$, $P_{2}\in \Lambda_{2}$ such that 
$W^{u}(P_{1},g_{2})$ and $W^{s}(P_{2},g_{2})$ have a quadratic tangency.\qed
\end{enumerate}
\end{proposition}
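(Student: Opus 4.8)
The plan is to reconstruct Steps 1--3 of \cite[\S 7]{PV94}, i.e.\ the Newhouse--Palis--Takens creation of a thick horseshoe near a homoclinic tangency, in the form adapted by Palis--Viana to the sectionally dissipative setting. Recall that $g_1$ has a quadratic homoclinic tangency $Z\in W^u(P)\cap W^s(P)$ inside $U_Y$, and that $P$ satisfies (\ref{P-eigenvalues}); in particular the strong stable eigenvalues are dominated by $\alpha_c$ and $|\alpha_c\alpha_u|<1$. First I would fix $C^2$ linearizing coordinates near $P$ and a transition map describing the excursion from $W^u_{\loc}(P)$ back to a box about $Z$, and then carry out the \emph{renormalization}: for each large integer $n$, write the return map $R_n$ (the $n$-th iterate of $g_1$ near $P$ composed with the transition) on a box around $Z$, with an appropriate rescaling (of order $|\alpha_u|^{n}$ in the unfolding direction). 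Because every area element containing the unstable direction is contracted, after this rescaling the maps $R_n$ converge, in the $C^2$ sense used in \cite{PT93, PV94}, to a one-dimensional quadratic endomorphism $\psi_{\mu}\colon t\mapsto \mu-t^{2}$ of an interval, the parameter $\mu$ being an affine function of the unfolding of the tangency and of $n$.

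Granting this, the second step is to \emph{choose the perturbation}. There is a value $\mu^{\ast}$ at which a horseshoe of $\psi_{\mu}$ is first created, and for $\mu$ slightly above $\mu^{\ast}$ the nonescaping set $K_{\mu}$ of $\psi_{\mu}$ is a hyperbolic Cantor set which nearly fills its interval, so $\tau(K_{\mu})\to\infty$ as $\mu\downarrow\mu^{\ast}$; I would fix $\mu_{0}$ with $\tau(K_{\mu_{0}})$ larger than, say, $2/\tau^{u}(\Lambda_{1})$. Since the renormalized parameter $\mu$ is amplified by the factor $|\alpha_u|^{n}$, an arbitrarily $C^2$-small perturbation of $g_1$ supported near $Z$, for a suitably large $n$, gives a diffeomorphism $g_{2}$ whose return map $R_n$ is $C^2$-close to $\psi_{\mu_{0}}$. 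By persistence of hyperbolic sets, the $g_{2}$-saturation of the corresponding hyperbolic set for $R_n$ is a nontrivial basic set $\Lambda_{2}$ of index $1$, contained in a small box about $Z$ and hence disjoint from $\Lambda_{1}$, with $\tau^{s}(\Lambda_{2})$ as close as we wish to $\tau(K_{\mu_{0}})$; by the continuity of the stable thickness recalled in Section~\ref{S3} this yields $\tau^{u}(\Lambda_{1})\tau^{s}(\Lambda_{2})>1$, which is (1).

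For (2), the invariant manifolds of the periodic points of $\Lambda_{2}$ shadow $W^u_{\loc}(P)$ and $W^s_{\loc}(P)$: unstable manifolds of points of $\Lambda_{2}$ become $C^1$-close to $W^u_{\loc}(P)$ under forward iteration, and dually for stable manifolds. Since $P\in\Lambda_{1}$ and $Z\in W^u(P)\cap W^s(P)$, the inclination lemma together with the transversality that holds off the folding locus produces points of $W^u(\Lambda_{1})\pitchfork W^s(\Lambda_{2})$ and of $W^u(\Lambda_{2})\pitchfork W^s(\Lambda_{1})$, so $\Lambda_{1}$ and $\Lambda_{2}$ are homoclinically related. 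For (3), the set of parameters realizing a thick $\Lambda_{2}$ is open, so inside it one may make a further $C^2$-small adjustment to a value at which the folded piece of $W^u(P_{1})$ becomes exactly quadratically tangent to $W^s(P_{2})$ for suitable periodic points $P_{1}\in\Lambda_{1}$ and $P_{2}\in\Lambda_{2}$. I expect the renormalization step to be the main obstacle: the whole scheme rests on extracting from the sectional dissipativity enough $C^2$ control of the distortion in the central and strong stable directions to make the return maps genuinely converge to the one-dimensional family, so that hyperbolicity and the thickness estimate can be transported back to the $d$-dimensional map. This is exactly the content of Steps 1--3 in \cite[\S 7]{PV94}, to which I refer for the details.
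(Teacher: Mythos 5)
Your proposal is correct and follows essentially the same route as the paper, which proves this proposition by direct citation to Steps 1--3 of \cite[\S 7]{PV94} and merely appends a one-sentence explanation that $\Lambda_2$ arises from a renormalization whose return maps converge to H\'enon-like endomorphisms with arbitrarily thick expanding Cantor set. Your reconstruction of that renormalization (including the degeneration to the one-dimensional quadratic family under sectional dissipativity, the choice of a parameter just past horseshoe creation to make $\tau(K_{\mu_0})$ large, the inclination-lemma argument for homoclinic relatedness, and the final $C^2$-small adjustment to realize the quadratic tangency) matches what the cited steps in \cite{PV94} actually do.
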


Actually, the above basic set $\Lambda_{2}$ 
is obtained by a renormalization 
of return maps defined on $U_{Y}$ which converge to H\'enon-like endomorphisms   
whose stable thickness of the invariant expanding Cantor set is arbitrarily large  
if $g_2$ is sufficiently close to $g_1=f_{\mu_n,\nu_n}$, see \cite[\S 6]{PV94}. 
Hence, one can suppose that
\begin{equation}\label{newhouse condition}
\tau^{u}(\Lambda_{0})\tau^{s}(\Lambda_{2})>1
\end{equation}
holds, where $\Lambda_{0}=\Lambda_{0,g_2}$ is the continuation of the original basic set $\Lambda_0$ in Theorem \ref{theoremA}.

Now we will show that there exists a diffeomorphism $g_3$ arbitrarily $C^2$ close to $g_2$ and such that 
 $W^{u}(\Lambda_{0}, g_{3})\cap W^{s}(\Lambda_{2}, g_{3})$ 
contains a heterodimensional tangency of elliptic type.

For a given $q\in \Lambda_{0}$, 
a compact subset $A$ of $W^u(q,g_2)$ is called an \emph{unstable cylinder} with foliation $\mathcal{F}$ 
if there exists a $C^1$ diffeomorphism $h:[0,1]\times S^{d-2}\to A$ with $\mathcal{F}=\{h(\{t\}\times S^{d-2})\,;\,
0\leq t\leq 1\}$.
For $p\in \Lambda_{1}$, we say that a sequence of unstable cylinders  $A_{n}$ in $W^{u}(q,g_{2})$ with foliations $\mathcal{F}_n$
$C^1$ \emph{converges} to an arc $\alpha$ in $W^{u}(p,g_{2})$  
if it satisfies the following conditions.
\begin{itemize}
\item
The diameter of each 
leaf of $\mathcal{F}_n$ is less than a constant $\varepsilon_n>0$ with $\lim_{n\to \infty}\varepsilon_n=0$.
\item
There exist $C^1$ sections $\sigma_n:[0,1]\to A_n\subset M$ meeting all leaves of $\mathcal{F}_n$ 
transversely and $C^1$ converging to 
an embedding $\sigma_\infty :[0,1]\to M$ with $\sigma_\infty([0,1])=\alpha$.
\end{itemize}

\begin{lemma}\label{lem4.2}
Let $g_{2}$ be the diffeomorphism in Proposition \ref{prop4.1}. 
For any periodic points $q\in \Lambda_{0}$, $p\in \Lambda_{1}$, 
there exists a sequence of foliated unstable cylinders $A^{u}_{n}$ in $W^{u}(q,g_{2})$
$C^1$ converging to an arc in $W^{u}_{\loc}(p, g_{2})$ containing $p$ as an interior point.
\end{lemma}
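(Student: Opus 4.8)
\textbf{Proof plan for Lemma \ref{lem4.2}.}
The plan is to exploit the fact that $\Lambda_0$ and $\Lambda_1$ are joined by a heterodimensional cycle via the intersection point $X\in W^u(P)\cap W^s(Q)$, together with the inclination/$\lambda$-lemma, to push a small piece of $W^u(q)$ forward until it accumulates on $W^u(p)$. First I would reduce to a convenient pair of periodic points: since $\Lambda_0$ is transitive, $W^u(q)$ is dense in $W^u(\Lambda_0)$ for every $q\in\Lambda_0$, and likewise $W^u_{\loc}(p)$ accumulates on $W^u_{\loc}(P)$ inside $W^u(\Lambda_1)$; so it suffices to establish the conclusion for a single well-chosen pair and then transport it along the homoclinic/heteroclinic relations within the basic sets. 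Concretely, I would use the original nontransverse (quasi-transverse) intersection point $X\in W^u(P)\cap W^s(Q)$ from Remark \ref{rmk1}: near $X$, a $(d-1)$-disk of $W^u(Q)$ that meets $W^s_{\loc}(Q)$ transversally in a neighborhood of $\tilde X\in W^s_{\loc}(Q)$.

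The core step is the construction of the cylinders. Take a small $(d-1)$-disk $D^u\subset W^u(q)$ which, after finitely many forward iterates, has a component $D'$ lying in $U_Q$ close to $W^s_{\loc}(Q)$ and crossing it; parametrize $D'$ so that the fibers of a foliation $\mathcal F'$ are $(d-2)$-spheres ``parallel'' to $W^s_{\loc}(Q)\cap U_Q$ (i.e. level sets of the $x$- and $y$-coordinates) and a transverse section $\sigma'$ runs in the strong-unstable direction $B_u$. Applying the linearized map (\ref{linearmap_Q}) repeatedly, the $B_u$-direction is expanded while the $(d-2)$-sphere fibers are carried toward $W^u_{\loc}(Q)$; by the inclination lemma the iterates $f^k(D')$, suitably restricted, $C^1$-converge to $W^u_{\loc}(Q)$, and the induced foliations have leaf-diameter going to $0$ because the transverse ($x,y$) directions are contracted under $f$ in $U_Q$ while we renormalize the expanded $B_u$-length back to unit size. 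This produces foliated unstable cylinders accumulating on $W^u_{\loc}(Q)$; following them through the transition near $X$ (using that $X\in W^u(P)$, so $W^u_{\loc}(P)$ passes through $X$) and then iterating forward near $P$ via (\ref{linearmap_P}), the same inclination-lemma argument in $U_P$ makes these cylinders $C^1$-converge to $W^u_{\loc}(P)$, and an arc $\alpha\subset W^u_{\loc}(P)$ containing $P$ in its interior is realized as the limit of the sections $\sigma_n$. Finally, to get the conclusion for an arbitrary prescribed pair $q\in\Lambda_0$, $p\in\Lambda_1$, I would compose: from $q$ reach a neighborhood of $Q$ using transitivity of $\Lambda_0$ and density of $W^u(q)$ in $W^u(\Lambda_0)$; and from a neighborhood of $P$ reach an arc through $p$ using that $\Lambda_1$ is transitive and that $W^u(P)$ accumulates on $W^u_{\loc}(p)$, again invoking the inclination lemma at the end.

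The main obstacle I anticipate is bookkeeping the two competing effects in the definition of $C^1$-convergence of cylinders: one needs the transverse-section convergence (a genuine $C^1$ statement about graphs over $\alpha$, requiring control of tangent planes, i.e. angle estimates supplied by domination/hyperbolicity of $B_u$ and $A_u=\alpha_u$) simultaneously with leaf-diameters shrinking to $0$, and these are governed by the ratio between the weakest expanding rate in the strong-unstable direction and the contracting rates transverse to it. In $U_Q$ this ratio is controlled by $|\beta_1|>|\beta_c|>1$ versus $|\beta_s|<1$ and $|\beta_c|$, and in $U_P$ by $|\alpha_u|>1$ versus $|\alpha_1|,\dots,|\alpha_{d-2}|,|\alpha_c|<1$; the sectionally dissipative hypothesis (\ref{P-eigenvalues}) is exactly what guarantees the transverse directions near $P$ are contracted relative to the expansion, so the leaves genuinely collapse. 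Outside the linearizing charts one replaces the explicit rates by the dominated splitting / hyperbolic-continuation estimates, which only costs uniform constants. Once these estimates are assembled, the convergence statement follows from the standard $C^1$ $\lambda$-lemma applied to the disk and to a transverse section, and the lemma is proved.
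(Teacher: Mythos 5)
There is a genuine gap: your construction never actually produces the $(d-2)$-spheres that foliate an unstable cylinder, and the route you take through $X$ cannot produce them. In the paper's proof the cylinders originate from the transverse $(d-2)$-sphere $S_0^{d-2}\subset W^{u}(Q,g_2)\pitchfork W^{s}_{\loc}(P,g_2)$ that Proposition~\ref{prop2.1} manufactures by unfolding the \emph{elliptic} heterodimensional tangency at $Y$; one takes a tubular neighborhood $A_0^u$ of $S_0^{d-2}$ inside $W^u(Q,g_2)$, foliated by the level surfaces $z=\mathrm{const}$ of $U_P$, and then iterates \emph{forward in $U_P$} by the sectionally dissipative linear map (\ref{linearmap_P}); the transverse ($\boldsymbol{x},y$) directions contract while $z$ expands, so sub-cylinders $A_n^u\subset g_2^n(A_0^u)$ $C^1$-converge to the arc $(\mathbf{0},0)\times[-1,1]\subset W^u_{\loc}(P)$. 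Your plan instead routes a piece of $W^u(q)$ through $U_Q$ and then through the quasi-transverse point $X\in W^u(P)\cap W^s(Q)$. But $X$ is the intersection of two curves of complementary codimension and carries no spherical structure, and in the linearizing coordinates on $U_Q$ the ``level sets of the $x$- and $y$-coordinates'' are $(d-2)$-\emph{planes}, not spheres; a $(d-1)$-disk of $W^u(q)$ pushed by the inclination lemma simply accumulates on the flat disk $W^u_{\loc}(Q)$, and after transiting near $X$ it accumulates on the flat disk $W^u_{\loc}(P)$. Nowhere in that route does a cylinder $[0,1]\times S^{d-2}$ appear, so the key object of the lemma is never built.

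A secondary sign of the same confusion: you write ``$\tilde X\in W^s_{\loc}(Q)$'', but by Remark~\ref{rmk1}(iii) one has $\tilde X=f^{-N_1}(X)\in W^s_{\loc}(P)$, not $W^s_{\loc}(Q)$; and more importantly the point that actually matters for this lemma is $Y$ (the elliptic tangency), not $X$. Your final reduction step (passing from the special pair $Q,P$ to arbitrary periodic $q\in\Lambda_0$, $p\in\Lambda_1$ via the inclination lemma and homoclinic relations inside each basic set) does match the paper's argument and is correct, and your remarks about the rate bookkeeping near $P$ are sound; but they sit on top of a construction that does not yield a foliated cylinder. The fix is to abandon the $X$/$U_Q$ route, start from the sphere $S_0^{d-2}$ supplied by Proposition~\ref{prop2.1}, and carry out the inclination-lemma argument entirely in $U_P$.
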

\begin{proof}
To make the proof clear, we first show the lemma for the fixed points
$Q\in \Lambda_{0}$, $P\in \Lambda_{1}$. 
Since $g_{2}$ is sufficiently $C^{2}$ close to $g_1$, 
Proposition \ref{prop2.1} implies that 
$W^{s}_{\loc}(P, g_{2})\pitchfork W^{u}(Q, g_{2})$ contains a $(d-2)$-sphere $S_0^{d-2}$.
Let $A_{0}^{u}$ be a closed tubular neighborhood of $S_0^{d-2}$ in $W^{u}(Q,g_2)$ with foliation $\mathcal{F}_0$ 
each leaf of which is the intersection of $A_0^u$ and the level surface $z=t$ with respect to the linearizing coordinate on $U_P$ for some $t$ with $|t|\leq \varepsilon$, where $\varepsilon$ is a small positive number.
See Figure \ref{fig41}.
\begin{figure}[hbt]
\centering
\scalebox{0.75}{\includegraphics[clip]{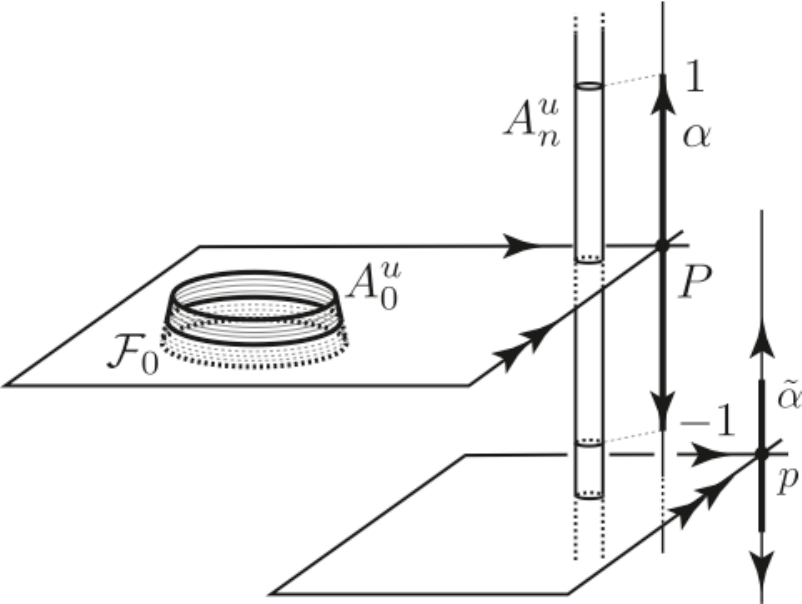}}
\caption{Foliated unstable cylinders.
}
\label{fig41}
\end{figure}
For any sufficiently large $n$, there exists a sub-cylinder $A^u_n$ of the unstable cylinder 
$g_2^n(A_0^u)$ with $A_n^u\supset g_2^n(S_0^{d-2})$  
such that the boundary $\partial A_n^u$ is contained in the 
union of the level surfaces $z=\pm 1$.
The foliation $\mathcal{F}_n$ on $A_n^u$ is defined similarly as $\mathcal{F}_0$.
By (\ref{linearmap_P}), 
we conclude that the sequence of $A_{n}^{u}$ $C^1$ converges to the arc $\alpha=(\mathbf{0},0)\times [-1,1]$ in $W^{u}_{\loc}(P, g_{2})$.
Note that $P=(\mathbf{0},0)\times\{0\}$ is an interior point of $\alpha$.

Next, we observe that the assertion of  the lemma holds  for any periodic points $q\in \Lambda_{0}$ and $p\in \Lambda_{1}$. 
Since $W^{u}(q,g_{2})\pitchfork W^{s}(Q,g_{2})\neq \emptyset$ 
and
$W^{s}(p,g_{2})\pitchfork W^{u}(P,g_{2})\neq \emptyset$, 
by Inclination Lemma,  there exists 
a  $(d-1)$-dimensional disk in $W^{u}(q,g_{2})$ (resp.\ $W^{s}(p,g_{2})$)
which is arbitrarily $C^{1}$-close to $W^{u}_{\loc}(Q,g_{2})$ (resp.\ $W^{s}_{\loc}(P,g_{2})$).
Thus, $W^{s}(p, g_{2})\pitchfork W^{u}(q, g_{2})$ contains a $(d-2)$-sphere $\tilde{S}_0^{d-2}$ 
arbitrarily $C^{1}$-close to $S_0^{d-2}$. 
Moreover, one has a closed tubular neighborhood $\tilde{A}^{u}_{0}$ of $\tilde{S}_0^{d-2}$ 
in $W^{u}(q, g_{2})$ 
which is  arbitrarily $C^{1}$-close to $A^{u}_{0}$.
Since $W^{s}(p,g_{2})\pitchfork W^{u}(P,g_{2})\neq \emptyset$, 
the unstable cylinder $g_{2}^{m}(\tilde{A}^{u}_{0})$   
for sufficiently large $m$ and $W^{s}_{\loc}(p, g_{2})$ has 
a transverse $(d-2)$-sphere intersection. 
Thus, by a similar argument to the first case, one can obtain sub-cylinders 
$\tilde{A}^{u}_{n}$ of $g_{2}^{m+n\mathrm{per}(p)}(\tilde{A}^{u}_{0})$ which 
$C^{1}$-converges to the arc $\tilde\alpha$ containing $p$ in $W^{u}_{\loc}(p, g_{2})$. 
\end{proof}

\begin{proposition}\label{prop4.3}
There exists a 
$g_{3}\in \Diff^{2}(M)$ arbitrarily $C^{2}$ close to $g_{2}$ of Proposition \ref{prop4.1} with the 
saddle periodic points $P$, $Q$ and the 
continuations of the basic sets $\Lambda_{0}$, $\Lambda_{1}$, $\Lambda_{2}$  
such that 
\begin{enumerate}[\rm (1)]
\item
$\Lambda_{1}$ and $\Lambda_{2}$ are homoclinically related:
\item $Q\in \Lambda_{0}$, $P\in \Lambda_{1}$;
\item $W^{u}(\Lambda_{0}, g_{3})\cap W^{s}(\Lambda_{2}, g_{3})$ 
contains a heterodimensional tangency $r$ of elliptic type.
\end{enumerate}
\end{proposition}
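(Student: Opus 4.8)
The plan is to produce the heterodimensional tangency by \emph{thickening} the one-dimensional quadratic tangency supplied by Proposition~\ref{prop4.1}, transporting to its contact region a thin foliated unstable cylinder from Lemma~\ref{lem4.2}, and then sliding that cylinder by a one-parameter perturbation supported in $U_{Y}$ until it becomes tangent to the stable manifold.

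First I would fix the periodic points $P_{1}\in\Lambda_{1}$ and $P_{2}\in\Lambda_{2}$ of Proposition~\ref{prop4.1}(3), so that $W^{u}(P_{1},g_{2})$, which is one-dimensional since $\index(\Lambda_{1})=1$, and $W^{s}(P_{2},g_{2})$, of dimension $d-1$, have a quadratic tangency at a point $t_{0}$; since $\Lambda_{2}$ arises from the Palis--Viana renormalization carried out near the homoclinic tangency in $U_{Y}$, one may assume $t_{0}\in U_{Y}$. Applying Lemma~\ref{lem4.2} with $q=Q\in\Lambda_{0}$ and $p=P_{1}$ produces foliated unstable cylinders $A_{n}^{u}\subset W^{u}(Q,g_{2})$ that $C^{1}$-converge to an arc $\alpha\subset W^{u}_{\loc}(P_{1},g_{2})$ having $P_{1}$ as an interior point. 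Since $t_{0}\in W^{u}(P_{1},g_{2})$, for a fixed multiple $m$ of $\mathrm{per}(P_{1})$ the point $g_{2}^{-m}(t_{0})$ lies in the interior of $\alpha$, hence in $A_{n}^{u}$ for all large $n$; consequently $g_{2}^{m}(A_{n}^{u})\subset W^{u}(Q,g_{2})$ is, near $t_{0}$, a thin tube of width $\asymp\rho_{n}\to 0$ around the sub-arc $g_{2}^{m}(\alpha)\ni t_{0}$ of $W^{u}(P_{1},g_{2})$.

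The heart of the matter is the local model near $t_{0}$. Choose $C^{2}$ coordinates $(s,x',z)$ centered at $t_{0}$, with $s\in\mathbb{R}$, $x'\in\mathbb{R}^{d-2}$ and $z\in\mathbb{R}$, in which $W^{s}(P_{2},g_{2})=\{z=0\}$ and $g_{2}^{m}(\alpha)$ is the curve $\{x'=0,\ z=\kappa s^{2}+O(s^{3})\}$ with $\kappa\neq 0$ (the tangency being quadratic) and with $s$-axis tangent to it at $t_{0}$. The fibers of the foliation of $g_{2}^{m}(A_{n}^{u})$ near $t_{0}$ are $(d-2)$-dimensional, of diameter $\asymp\rho_{n}$, and lie in the $(d-1)$-planes transverse to $g_{2}^{m}(\alpha)$; their affine hulls converge, as $n\to\infty$, to a $(d-1)$-plane $V$ transverse to $g_{2}^{m}(\alpha)$ at $t_{0}$. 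Because the tangency is genuine, $T_{t_{0}}g_{2}^{m}(\alpha)\subset T_{t_{0}}W^{s}(P_{2},g_{2})=\{z=0\}$, so transversality of $V$ to the curve forces $V\neq\{z=0\}$; hence the $z$-coordinate restricted to the fiber of $g_{2}^{m}(A_{n}^{u})$ surrounding $t_{0}$ has a non-degenerate maximum and a non-degenerate minimum, the extremal values being $\asymp\rho_{n}$ and of opposite signs. A direct computation then shows that, near $t_{0}$, $g_{2}^{m}(A_{n}^{u})$ is the graph of a $C^{2}$ function $z=U_{n}(s,x')$ having a non-degenerate critical point near the origin, whose critical value is of order $\rho_{n}$ and whose Hessian is approximately $\mathrm{diag}(2\kappa,\ \varepsilon\,\rho_{n}^{-1}B)$, where $B$ is a fixed positive-definite $(d-2)\times(d-2)$ matrix and the sign $\varepsilon\in\{+1,-1\}$ is obtained by working, respectively, at the minimum-pole or the maximum-pole of that fiber. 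Selecting the pole so that $\varepsilon=\mathrm{sign}(\kappa)$ makes this Hessian definite, i.e.\ of elliptic type, at the cost that its critical value is then nonzero, of order $\rho_{n}$.

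Finally, by a $C^{2}$-small perturbation of $g_{2}$ supported in $U_{Y}$, of the kind used in Section~\ref{sec2} (translating an appropriate iterate of the cylinder relative to $W^{s}(P_{2},g_{2})$ by the amount, of order $\rho_{n}$, equal to the critical value above), one obtains $g_{3}$ arbitrarily $C^{2}$-close to $g_{2}$ for which the continuation of $g_{2}^{m}(A_{n}^{u})$ is tangent to $W^{s}(P_{2},g_{3})$ at a point $r$, the associated graph function still having vanishing gradient and definite Hessian there. Then $r\in W^{u}(Q,g_{3})\cap W^{s}(P_{2},g_{3})\subset W^{u}(\Lambda_{0},g_{3})\cap W^{s}(\Lambda_{2},g_{3})$ is a heterodimensional tangency of elliptic type, while properties~(1) and~(2) persist because nontrivial basic sets, their periodic points, and homoclinic relations are all preserved under small $C^{2}$-perturbations. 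I expect the principal obstacle to be the local model above: verifying that the thin tube around a quadratically tangent curve is indeed a graph with a non-degenerate, correctly signed Hessian and with critical value of exactly the order $\rho_{n}$, so that an $o(1)$ perturbation makes it tangent; the convergence of the fiber planes to $V$ and the precise choice of the region in $U_{Y}$ where the perturbation is carried out are routine but require care.
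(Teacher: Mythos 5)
Your outline follows the same skeleton as the paper: take the one-dimensional quadratic tangency from Proposition~\ref{prop4.1}(3), transport foliated unstable cylinders from Lemma~\ref{lem4.2} to it, and then perturb the cylinder into tangency with $W^{s}(P_{2})$. The two proofs diverge in how they execute the last two steps, and your version has two genuine soft spots.

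\textbf{(a) How the sign of the Hessian is controlled.} The paper first shrinks $L^{u}$ so that it lies entirely in one component of $U_{R}\setminus W^{s}_{\loc}(P_{2},g_{2})$; then, as the renormalization parameter is moved, the Intermediate Value Theorem gives a ``first touch'' for which $A^{u}\setminus\{r\}$ stays in that component. This one-sided condition forces the Hessian to be negative (or positive) \emph{semi}-definite for free, and a final arbitrarily small $C^{2}$ perturbation makes it strictly definite. You instead compute the Hessian of the tube explicitly and claim it is approximately $\mathrm{diag}(2\kappa,\,\varepsilon\rho_{n}^{-1}B)$. As stated this is not right: the mixed $\partial_{s}\partial_{x'}$-terms are of the same magnitude $\rho_{n}^{-1}$ as the $x'$-block, because they are governed by the sliding of the fiber pole along $L^{u}$, so the Hessian is far from block-diagonal. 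The conclusion (definiteness) is still salvageable via the Schur complement --- if $U_{n}(s,x')=V\!\bigl(x'-p(s)\bigr)+\kappa s^{2}+\cdots$, the $2\kappa V''$ term survives after cancellation --- but this is precisely the computation you describe as ``routine but requires care,'' and it is exactly what the paper's one-sided argument is engineered to avoid. Note also that the $x'$-block is not uniformly $\asymp\rho_{n}^{-1}$: the fibers are pushed forward by $\operatorname{diag}(A_{s}^{n},\alpha_{c}^{n})$, whose eigenvalues $\alpha_{1},\dots,\alpha_{d-2},\alpha_{c}$ have distinct moduli by hypothesis (1), so the $(d-2)$-spheres degenerate into increasingly anisotropic ellipsoids whose principal curvatures spread over a range $(|\alpha_{c}|/|\alpha_{1}|)^{n}$; the matrix $B$ cannot be taken fixed.

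\textbf{(b) The perturbation mechanism.} The paper achieves the tangency by moving the renormalization parameter of the H\'enon-like family, which changes the relative position of $W^{u}(Q)$ and $W^{s}(\Lambda_{2})$ in a controlled, globally coherent way. You instead propose a bump-function translation supported in $U_{Y}$. But $\Lambda_{2}$, $P_{2}$ and the relevant piece of $W^{s}(P_{2})$ were created by the Palis--Viana renormalization precisely inside $U_{Y}$, so a bump supported there will in general move $W^{s}(P_{2})$ together with the cylinder; you cannot translate one ``relative to'' the other without saying more precisely where along the orbit of the cylinder you place the support (e.g.\ along a backward iterate away from $\Lambda_{2}$), and why this leaves the Hessian analysis of part (a) intact. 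This is a real gap, not just a stylistic difference.

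So the structure you propose is the correct one, and items (1)--(2) of the proposition are handled appropriately. But the local Hessian claim as written is incorrect, and the choice of perturbation is not safe without extra care; the paper's route (one-sided $L^{u}$ + first-touch IVT on the renormalization parameter + a final generic perturbation for strict definiteness) is specifically designed to dodge both of these difficulties.
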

\begin{proof}
The first and second claims are obtained immediately from (2) of Proposition \ref{prop4.1}.
By (3) of Proposition \ref{prop4.1}, there exist periodic points $P_{1}\in \Lambda_{1}$, $P_{2}\in \Lambda_{2}$ and an arc $L^{u}$ in $W^{u}(P_{1},g_{2})$ which has a quadratic tangency $R$ with $W^{s}_{\loc}(P_{2},g_{2})$.
If necessary replacing $L^u$ by a shorter arc, 
we may assume that $L^u$ is contained entirely
 in one component of $U_{R}\setminus W^{s}_{\loc}(P_{2},g_{2})$, 
 say in the lower component, where $U_{R}$ is the $\eta$-neighborhood  of $R$ in $M$ 
 with small $\eta>0$. 
By Lemma \ref{lem4.2}, there exists a sequence of unstable cylinders $A_n^u$ in $W^u(Q,g_2)$ $C^1$ converging to an arc $\alpha$ in $W_{\loc}^u(P_1,g_2)$ containing $P_1$ as an interior point.
Then we have a subarc $\alpha'$ of $\alpha$ and $m\in \mathbb{N}$ such that $g_2^m(\alpha')=L^u$.
Let ${A_n^u}'$ be sub-cylinders of $A_n^u$ $C^1$ converging to $\alpha'$.
Then ${A_n^u}''=g_2^m({A_n^u}')$ are unstable cylinders in $W^u(Q,g_2)$ $C^1$ converging to $L^u$, see Figure \ref{fig4.2}.
\begin{figure}[hbt]
\centering
\scalebox{0.78}{\includegraphics[clip]{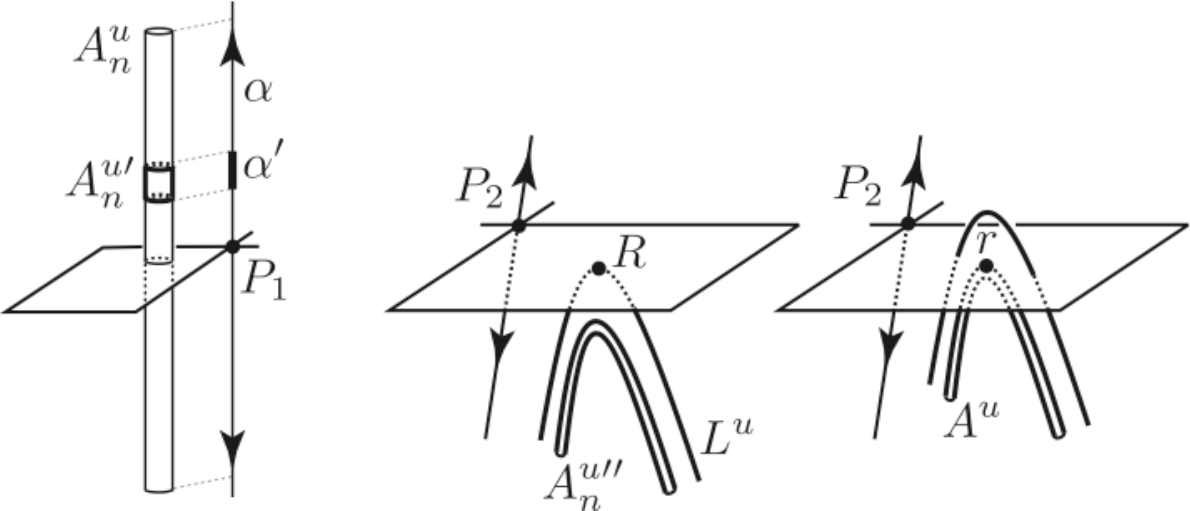}}
\caption{Creation of heterodimensional tangency.
}
\label{fig4.2}
\end{figure}

Note that the tangency between $L^{u}$ and $W^{s}(P_{2},g_{2})$
 unfolds generically with respect to the parameter of the H\'enon-like family 
given by 
the renormalization in Proposition \ref{prop4.1} as in \cite[\S6 and Step 3 of \S 7]{PV94}.
Thus, by controlling the parameter and applying 
the Intermediate Value Theorem, one can get a 
$g_{3}\in \Diff^{2}(M)$ arbitrarily  close to $g_{2}$
such that 
the continuation $A^u$ of ${A_n^{u}}''$ in $W^{u}(Q,g_{3})$ 
has a tangency $r$ with $W^{s}(P_{2},g_{3})$ and $A^u\setminus \{r\}$ lies in the lower component of $U_R\setminus W_{\loc}^s(P_2,g_3)$ if we take $n$ sufficiently large.
In particular, all the eigenvalues of the Hessian matrix of $A^u$ at $r$ relative to $W^s(P_2,g_3)$ are non-positive.
Sightly modifying $g_3$ by $C^2$ perturbation if necessary, we may suppose that all these eigenvalues are 
strictly negative.
It follows that the tangency $r$ is of elliptic type.
Thus the proof is complete.
\end{proof}

The assertion of Proposition \ref{prop4.3} (1) will be used to prove Theorem \ref{theoremB}, see 
Remark \ref{remark6.1}.

Consider the unstable manifold $W^u(q,g_3)$ associated with $q\in \Lambda_0$.
Let $\gamma:I=(-\varepsilon,\varepsilon)\to M$ be a short $C^1$ regular curve meeting $W^u(q,g_3)$ 
transversely at $\gamma(0)$.
Since $\Lambda_0$ is a nontrivial basic set of index $d-1$, 
there exists a Cantor set $K_{0,g_3}$ in $I$ with $K_{0,g_3}\ni 0$ which is defined as $K^u$ in Section \ref{S3}.
We say that $W^u(q,g_3)$ is \emph{two-sided} if $(-\eta,0)\cap K_{0,g_3}\neq \emptyset$ and $(0,\eta)\cap K_{0,g_3}\neq \emptyset$ for any $0<\eta<\varepsilon$.
The two-sided stable manifold $W^s(p,g_3)$ with $p\in \Lambda_2$ is defined similarly.
Since a Cantor set is perfect by definition, there exists $t\in K_{0,g_3}$ arbitrarily close to $0$ (possibly $t=0$) such that the unstable manifold $W^u(q',g_3)$ for a $q'\in \Lambda_0$ with 
$W^u(q',g_3)\ni \gamma(t)$ is two-sided.

If necessary modifying the diffeomorphism $g_3$ given in Proposition \ref{prop4.3} slightly, one can suppose that 
$g_3$ satisfies the following.

\begin{corollary}\label{cor4.4}
The unstable manifolds $W^u(q,g_3)$ in $W^u(\Lambda_0,g_3)$ and the stable manifold $W^s(p,g_3)$ in $W^s(\Lambda_2,g_3)$ containing the heterodimensional tangency $r$  
are two-sided.
\end{corollary}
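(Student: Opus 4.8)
The goal of Corollary~\ref{cor4.4} is to upgrade the ``one-sided'' heterodimensional tangency $r$ produced in Proposition~\ref{prop4.3} to one that is simultaneously two-sided on both the unstable side (in $W^u(\Lambda_0,g_3)$) and the stable side (in $W^s(\Lambda_2,g_3)$). The plan is to exploit the density of the local invariant manifolds of the periodic orbits in $\Lambda_0$ and $\Lambda_2$, together with the perfectness of the associated Cantor sets $K_{0,g_3}$ and $K^s_{2,g_3}$, which was already observed in the paragraph preceding the statement. Since a Cantor set has no isolated points, every point of $K_{0,g_3}$ is accumulated from both sides by points of $K_{0,g_3}$; the subtlety, however, is that the specific leaf $W^u(q,g_3)$ through the tangency point $r$ need not itself be two-sided — only that \emph{some} nearby leaf is. So the argument must move the tangency slightly, within a single unstable leaf that is two-sided, without destroying it.

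First I would work on the unstable side. Take the curve $\gamma:I\to M$ transverse to $W^u(q,g_3)$ at $\gamma(0)=r$, and the Cantor set $K_{0,g_3}\subset I$ with $0\in K_{0,g_3}$. As noted, pick $t\in K_{0,g_3}$ arbitrarily close to $0$ with the property that the leaf $W^u(q',g_3)\ni\gamma(t)$ is two-sided; since the set of parameters whose leaf is two-sided is itself dense in $K_{0,g_3}$ (again by perfectness applied locally), such $t$ exists arbitrarily close to $0$. Now $W^u(q',g_3)$ and $W^u(q,g_3)$ are $C^1$-close $(d-1)$-disks, so $W^u(q',g_3)$ still meets the small neighborhood $U_R$ of $r$ and lies in the same (lower) component of $U_R\setminus W^s_{\loc}(P_2,g_3)$ away from a tangency. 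By a further arbitrarily small $C^2$ perturbation of $g_3$ supported near $r$ — say a reparametrization of the Hénon-like family's parameter, exactly as in Proposition~\ref{prop4.3} — one slides the tangency from the leaf $W^u(q,g_3)$ onto the nearby two-sided leaf $W^u(q',g_3)$. Concretely, one applies the Intermediate Value Theorem to the signed gap between $W^u(q',g_3)$ and $W^s(P_2,g_3)$ as the parameter varies, obtaining a parameter value at which they are tangent; the ellipticity of the tangency is an open condition and survives. Rename $q'$ as $q$.

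Next I would repeat the symmetric argument on the stable side, this time perturbing within $\Lambda_2$: the local stable manifolds $W^s_{\loc}(p,g_3)$ of periodic orbits $p\in\Lambda_2$ are dense in $W^s(\Lambda_2,g_3)$, and the Cantor set $K^s_{2,g_3}$ cut out on a transversal is perfect, so there is a two-sided stable leaf $W^s(p',g_3)$ arbitrarily $C^1$-close to the current one. A small $C^2$ perturbation slides the tangency onto this two-sided stable leaf without affecting the unstable-side two-sidedness established in the previous step — here one must be a little careful that the two perturbations are carried out in disjoint regions, or in the correct order, so that the first property is not disturbed; this is easily arranged since the perturbation supports can be localized near $r$ and the ``two-sided'' property of the unstable leaf is, once achieved, $C^1$-open in the leaf and hence stable under the second small perturbation. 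After renaming $p'$ as $p$, the resulting diffeomorphism — still called $g_3$ and still arbitrarily $C^2$-close to the $g_3$ of Proposition~\ref{prop4.3} — has a heterodimensional tangency $r\in W^u(q,g_3)\cap W^s(p,g_3)$ of elliptic type with both leaves two-sided, which is the claim.

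The main obstacle is the bookkeeping in the last step: ensuring that the second perturbation, which fixes two-sidedness on the stable side, does not reintroduce one-sidedness on the unstable side, and that neither perturbation destroys the ellipticity or the inclusion of the cylinder $A^u\setminus\{r\}$ in a single component of $U_R\setminus W^s_{\loc}(p,g_3)$. This is handled by observing that all three properties in play — ellipticity of the tangency, two-sidedness of a given leaf (a statement about a fixed Cantor set on a transversal), and the one-sided location of the cylinder — are $C^1$-open, so once all are arranged they persist under any sufficiently small further perturbation; hence one simply performs the unstable-side adjustment first, then the stable-side adjustment with perturbation size small enough to preserve everything already gained.
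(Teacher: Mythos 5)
Your proposal is essentially the same approach as the paper's, relying on the perfectness of the transversal Cantor sets and a local perturbation (via the Intermediate Value Theorem on the H\'enon-like parameter, as in Proposition~\ref{prop4.3}) to slide the tangency onto two-sided leaves, with ellipticity surviving as an open condition. The one difference is organization: the paper notes that, because $T_rW^u(q,g_3)=T_rW^s(p,g_3)$ at the tangency, a \emph{single} transversal curve at $r$ is simultaneously transverse to both $W^u(q,g_3)$ and $W^s(p,g_3)$; it then finds $q'\in\Lambda_0$ and $p'\in\Lambda_2$ with both leaves two-sided and close to $r$ \emph{at once}, and performs a single perturbation to a tangency between $W^u(q',\cdot)$ and $W^s(p',\cdot)$. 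You instead perform two sequential perturbations, one per foliation, and must then argue the second does not undo the first. That concern is real, and your justification (``two-sidedness of the unstable leaf is $C^1$-open in the leaf'') is a bit loose as stated: two-sidedness of a generic leaf through a moving tangency point is not obviously an open condition. What actually makes your argument go through is that you slide the stable side to $W^s(p',\cdot)$ while keeping the tangency with the \emph{fixed} leaf $W^u(q',\cdot)$, and the two-sidedness of $W^u(q',g)$ for a fixed symbolic point $q'$ is preserved under small perturbation via the structural-stability conjugacy of the basic set, since accumulation from both sides of the corresponding parameter in the transversal Cantor set is a topological invariant. The paper's simultaneous choice of $q'$ and $p'$ followed by a single perturbation sidesteps this bookkeeping entirely and is the cleaner route, but your sequential version can be made rigorous with the fix just described.
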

\begin{proof}
Any $C^1$ regular curve in $M$ meeting $W^u(q,g_3)$ with $q\in \Lambda_{0}$ transversely at $r$ meets also $W^s(p,g_3)$ with $p\in \Lambda_{2}$
transversely at $r$.
By using the curve, one can show that there exist $q'\in \Lambda_0$ and $p'\in \Lambda_2$ such that both $W^u(q',g_3)$ and $W^s(p',g_3)$ are 
two-sided and intersect arbitrarily small neighborhood $U$ of $r$ in $M$.
Thus, we have a $g_3'\in \Diff^2(M)$ arbitrarily $C^2$ close to $g_3$ such that $W^u(q',g_3')$ and $W^s(p',g_3')$ have a heterodimensional 
tangency $r'$ of elliptic type in $U$.
The proof is completed by setting $p'=p$, $q'=q$, $g_3'=g_3$, $r'=r$ again.
\end{proof}

\section{Arc of tangencies of stable and unstable foliations}
Recall that $M$ is a closed $C^2$ manifold with $\dim M=d\geq 3$.
We denote the diffeomorphism $g_{3}$ obtained in Proposition \ref{prop4.3} by $g$ for simplicity.
Thus, $g\in \Diff^{2}(M)$ has nontrivial basic sets $\Lambda_0$ and $\Lambda_2$ with
$\index(\Lambda_0)=d-1$ and $\index(\Lambda_2)=1$.
Moreover, 
$W^{u}(\Lambda_{0}, g)$ and $W^{s}(\Lambda_{2},g)$ have a heterodimensional tangency.
Let ${\mathcal F}$ and $\widetilde{\mathcal F}$ be stable and unstable foliations associated with 
$\Lambda_0$ and $\Lambda_2$. 
Note that $W_{\mathrm{loc}}^u(\Lambda_0)$ and $W_{\mathrm{loc}}^s(\Lambda_2)$ are considered to be sublaminations of ${\mathcal F}$ and $\widetilde{\mathcal F}$ respectively.
Both ${\mathcal F}$, $\widetilde{\mathcal F}$ are $C^1$ foliations on $M$, but each leaf of ${\mathcal F}$ or $\widetilde{\mathcal F}$ is a codimension-one $C^2$ submanifold of $M$.  
The aim of this section is to show that, under suitable conditions, there exists a regular $C^1$ curve $\gamma:(-\varepsilon,\varepsilon)\to M$ which meets leaves of both ${\mathcal F}$ and $\widetilde{\mathcal F}$ transversely and such that, for any $t\in (-\varepsilon,\varepsilon)$, $\gamma(t)$ is a non-degenerate heterodimensional 
tangency between leaves of  ${\mathcal F}$ and  $\widetilde{\mathcal F}$.

Suppose that there exist leaves $\lambda_0$ of ${\mathcal F}$ and $\tilde \lambda_0$ of $\widetilde{\mathcal F}$ which have a strict tangency at a point $p$ in $M$.
A small open neighborhood $U$ of $p$ in $M$ has a $C^2$ coordinate $(\boldsymbol{x},z)$ with $p=(\mathbf{0},0)$  
such that the leaf $\lambda_0$ is contained in the level surface $z=0$, where 
$\boldsymbol{x}=(x_1,\dots,x_{d-1})$, $\mathbf{0}=(0,\dots,0)\in \mathbb{R}^{d-1}$. 
This gives the identification of $U$ with an open neighborhood of $(\mathbf{0},0)$ in $\mathbb{R}^d$.

Since ${\mathcal F}$ is a $C^1$ foliation on $M$ with $C^2$ leaves, for a sufficiently small $\delta>0$, there exists a $C^1$ diffeomorphism
\begin{equation}\label{eqn_varphi}
\varphi_g:(-\delta,\delta)^d=(-\delta,\delta)^{d-1}\times (-\delta,\delta)\to U
\end{equation}
with $\varphi_g(\boldsymbol{x},z)=(\boldsymbol{x},\alpha(\boldsymbol{x},z))$ and satisfying the following conditions, where we set $\varphi_g=\varphi$ for short.
\begin{itemize}
\item
$\alpha(\boldsymbol{x},z)$ is a $C^2$ map on $\boldsymbol{x}$ and $C^1$ on $z$.
\item
$\varphi(\boldsymbol{x},0)=(\boldsymbol{x},0)$ for any $\boldsymbol{x}\in (-\delta,\delta)^{d-1}$.
\item
For any $z\in (-\delta,\delta)$, $\varphi((-\delta,\delta)^{d-1}\times \{z\})$ is contained in a 
leaf of $\mathcal F$.
\end{itemize}

For any $(\boldsymbol{x},z)\in (-\delta,\delta)^d$, let 
$$\widetilde{\boldsymbol{N}}_{(\boldsymbol{x},z)}=\bigl(\boldsymbol{\mu}(\boldsymbol{x},z),\nu(\boldsymbol{x},z)\bigr)=\bigl(\mu_1(\boldsymbol{x},z),\dots,\mu_{d-1}(\boldsymbol{x},z),\nu(\boldsymbol{x},z)\bigr)$$
be the unit tangent vector of 
$T_{\varphi(\boldsymbol{x},z)}(U)$ orthogonal to the leaf of $\widetilde{\mathcal{F}}$ containing $\varphi(\boldsymbol{x},z)$ and such that the $d$-th entry $\nu(\boldsymbol{x},z)$ of $\widetilde{\boldsymbol{N}}_{(\boldsymbol{x},z)}$ is positive, see Figure \ref{fig5.1}.
\begin{figure}[hbt]
\centering
\scalebox{0.75}{\includegraphics[clip]{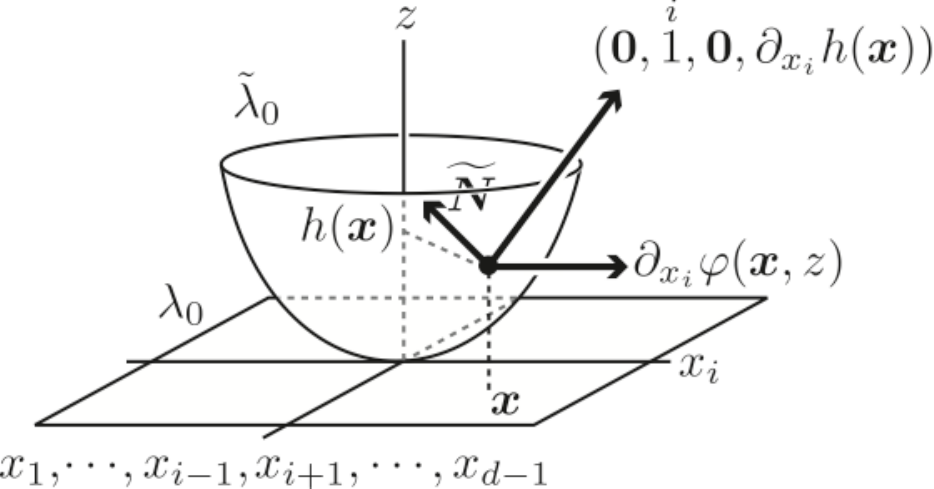}}
\caption{$h(\boldsymbol{x})=\alpha(\boldsymbol{x},z)$ if $\varphi(\boldsymbol{x},z)\in \tilde \lambda_0$.
}
\label{fig5.1}
\end{figure}
Consider the $C^1$ map $\psi_g=\psi:(-\delta,\delta)^d\to \mathbb{R}^{d-1}$ defined as 
$$\psi(\boldsymbol{x},z)=(\widetilde{\boldsymbol{N}}_{(\boldsymbol{x},z)}\cdot \partial_{x_1}\varphi(\boldsymbol{x},z),\dots,
\widetilde{\boldsymbol{N}}_{(\boldsymbol{x},z)}\cdot \partial_{x_{d-1}}\varphi(\boldsymbol{x},z))$$
Then the Jacobian matrix of $\psi(\boldsymbol{x},z)$ is 
\begin{equation*}
J\psi(\boldsymbol{x},z)=
\begin{pmatrix}
a_{1,1}(\boldsymbol{x},z)&\cdots
&a_{1,d-1}(\boldsymbol{x},z)
&b_1(\boldsymbol{x},z)\\
\vdots&\ddots&\vdots&\vdots\\
a_{d-1,1}(\boldsymbol{x},z)&\cdots
&a_{d-1,d-1}(\boldsymbol{x},z)
&b_{d-1}(\boldsymbol{x},z)
\end{pmatrix},
\end{equation*}
where
\begin{align*}
a_{i,j}(\boldsymbol{x},z)&=\partial_{x_j}\widetilde{\boldsymbol{N}}_{(\boldsymbol{x},z)}\cdot\partial_{x_i} \varphi(\boldsymbol{x},z)+\widetilde{\boldsymbol{N}}_{(\boldsymbol{x},z)}\cdot
\partial_{x_ix_j}\varphi(\boldsymbol{x},z),\\
b_i(\boldsymbol{x},z)&=
\partial_z\widetilde{\boldsymbol{N}}_{(\boldsymbol{x},z)}\cdot\partial_{x_i} \varphi(\boldsymbol{x},z)+\widetilde{\boldsymbol{N}}_{(\boldsymbol{x},z)}\cdot
\partial_{x_iz}\varphi(\boldsymbol{x},z).
\end{align*}
Since 
$\widetilde{\boldsymbol{N}}(\mathbf{0},0)=(\mathbf{0},1)$,
$
\partial_{x_i}\varphi(\mathbf{0},0)=(0,\dots,0,\overset{i}{1},0,\dots,0)$, 
$
\partial_{x_ix_j}\varphi(\mathbf{0},0)=(\mathbf{0},0)$ and 
$
\partial_{x_iz}\varphi(\mathbf{0},0)= (\mathbf{0},   \partial_{x_iz}\alpha(\mathbf{0}^{d}) )
$, 
we have
{\small \begin{equation}\label{eqn_0}
J\psi(\mathbf{0}^d)=
\begin{pmatrix}
\partial_{x_1}\mu_1(\mathbf{0}^d)&\cdots
&\partial_{x_{d-1}}\mu_1(\mathbf{0}^d)
&\partial_z\mu_1(\mathbf{0}^d)+\partial_{x_1z}\alpha(\mathbf{0}^d)\\
\vdots&\ddots&\vdots&\vdots\\
\partial_{x_1}\mu_{d-1}(\mathbf{0}^d)&\cdots
&\partial_{x_{d-1}}\mu_{d-1}(\mathbf{0}^d)
&\partial_y\mu_{d-1}(\mathbf{0}^d)+\partial_{x_{d-1}z}\alpha(\mathbf{0}^d)
\end{pmatrix},
\end{equation}
}
where $\mathbf{0}^d=(\mathbf{0},0)\in \mathbb{R}^d$.

Let $h:(-\delta,\delta)^{d-1}\to \mathbb{R}$ be a $C^2$ function such that the graph 
$\{(\boldsymbol{x},h(\boldsymbol{x}))\,;\, \boldsymbol{x}\in (-\delta,\delta)^{d-1}\}$ of $h$ is contained in the leaf $\tilde\lambda_0$ of 
$\widetilde{\mathcal F}$.
For any $\boldsymbol{x}\in (-\delta,\delta)^{d-1}$ and $i=1,\dots,d-1$, the vector $(0,\dots,0,\overset{i}{1},0,\dots,0,\partial_{x_i} h(\boldsymbol{x},0))$ is tangent to 
$\tilde \lambda_0$ at $(\boldsymbol{x},h(\boldsymbol{x}))$, see Figure \ref{fig5.1}.
It follows that
$$
\widetilde{\boldsymbol{N}}_{(\boldsymbol{x},h(\boldsymbol{x}))}\cdot (0,\dots,0,\overset{i}{1},0,\dots,0,\partial_{x_i} h(\boldsymbol{x}))=\mu_i(\boldsymbol{x},h(\boldsymbol{x}))+\nu(\boldsymbol{x},h(\boldsymbol{x}))\partial_{x_i} h(\boldsymbol{x})=0.
$$
Differentiating the latter equation by $x_j$ $(j=1,\dots,d-1)$, we have
\begin{multline*}
\partial_{x_j}\mu_i(\boldsymbol{x},h(\boldsymbol{x}))+\partial_z\mu_i(\boldsymbol{x},h(\boldsymbol{x}))\partial_{x_j} h(\boldsymbol{x})\\
\begin{split}
=-\bigl(\partial_{x_j}\nu(\boldsymbol{x},h(\boldsymbol{x}))&+\partial_z\nu(\boldsymbol{x},h(\boldsymbol{x}))\partial_{x_j} h(\boldsymbol{x})\bigr)\partial_{x_j} h(\boldsymbol{x})\\
&-\nu(\boldsymbol{x},h(\boldsymbol{x}))\partial_{x_jx_i} h(\boldsymbol{x}).
\end{split}
\end{multline*}
Since $h(\mathbf{0})=0$, $\partial_{x_j} h(\mathbf{0})=0$ and $\nu(\mathbf{0},0)=1$,
\begin{equation}\label{eqn_1}
\partial_{x_j}\mu_i(\mathbf{0},0)=-\partial_{x_jx_i}h(\mathbf{0}).
\end{equation}

\begin{proposition}\label{prop5.1}
With the notation as above, the following two conditions are equivalent.
\begin{enumerate}[\rm (1)]
\item
The strict tangency of $\lambda_0$ and $\tilde\lambda_0$ at $p=(\mathbf{0},0)$ in $U$ is non-degenerate.
\item
There exists a $C^1$ regular curve $\gamma_g=\gamma:(-\varepsilon,\varepsilon)\to U$ for a 
sufficiently small $\varepsilon>0$ with $\gamma(0)=p$ and such that, for each $t\in (-\varepsilon,\varepsilon)$, the curve $\gamma(-\varepsilon,\varepsilon)$ meets leaves $\lambda_t$ of $\mathcal F$ and $\tilde\lambda_t$ of $\widetilde{\mathcal F}$ at $\gamma(t)$ transversely.
Moreover, $\lambda_t$ and $\tilde\lambda_t$ have a non-degenerate strict tangency at $\gamma(t)$.
\end{enumerate}
\end{proposition}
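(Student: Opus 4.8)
The plan is to show the equivalence by relating the non-degeneracy of the tangency to the invertibility of the Jacobian matrix $J\psi(\mathbf{0}^d)$, and then using the implicit function theorem to produce the curve of tangencies. The map $\psi$ has been set up precisely so that $\psi(\boldsymbol{x},z)=\mathbf{0}$ exactly when the leaf of $\mathcal F$ through $\varphi(\boldsymbol{x},z)$ is tangent to the leaf of $\widetilde{\mathcal F}$ through $\varphi(\boldsymbol{x},z)$: indeed $\psi(\boldsymbol{x},z)$ records the pairing of the normal $\widetilde{\boldsymbol N}$ to the $\widetilde{\mathcal F}$-leaf with the spanning vectors $\partial_{x_i}\varphi$ of the $\mathcal F$-leaf, so $\psi=\mathbf{0}$ means $\widetilde{\boldsymbol N}$ is normal to both leaves, i.e.\ the two codimension-one leaves share their tangent hyperplane — a strict tangency.

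\textbf{Step 1: Identify the relevant square submatrix.} First I would observe that the solution set $\{\psi=\mathbf 0\}$ near $\mathbf{0}^d$ is, generically, a curve in $(-\delta,\delta)^d$, since $\psi$ maps a $d$-dimensional domain to $\mathbb{R}^{d-1}$. To apply the implicit function theorem I need $J\psi(\mathbf{0}^d)$ to have rank $d-1$. Using $(\ref{eqn_0})$ and $(\ref{eqn_1})$, the first $d-1$ columns of $J\psi(\mathbf{0}^d)$ form the matrix $\bigl(\partial_{x_j}\mu_i(\mathbf{0}^d)\bigr)_{i,j} = -\bigl(\partial_{x_ix_j}h(\mathbf{0})\bigr)_{i,j} = -Hh(\mathbf{0})$, the negative Hessian of $h$ at $\mathbf{0}$. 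Since the leaf $\tilde\lambda_0$ is the graph of $h$ with $\partial_{x_i}h(\mathbf{0})=0$ (the tangency condition), the tangency at $p$ is non-degenerate by definition precisely when $Hh(\mathbf{0})$ is regular, i.e.\ when this $(d-1)\times(d-1)$ block of $J\psi(\mathbf{0}^d)$ is invertible.

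\textbf{Step 2: From (1) to (2) via the implicit function theorem.} Assuming (1), the submatrix $\partial\psi/\partial\boldsymbol{x}(\mathbf{0}^d)$ is invertible, so by the $C^1$ implicit function theorem applied to $\psi$ with $z$ as the free parameter, there is a $C^1$ map $z\mapsto \boldsymbol{x}(z)$ on a small interval $(-\varepsilon,\varepsilon)$ with $\boldsymbol{x}(0)=\mathbf{0}$ and $\psi(\boldsymbol{x}(z),z)=\mathbf{0}$. Setting $\gamma(z)=\varphi(\boldsymbol{x}(z),z)$ gives a $C^1$ curve through $p$; it is regular because $\partial_z\gamma(0)$ has nonzero last component (as $\varphi$ is a diffeomorphism and the $z$-direction is transverse to $z=0$), and for the same reason it meets the leaves $z=\mathrm{const}$ of $\mathcal F$ transversely; transversality to $\widetilde{\mathcal F}$ follows because $\gamma$ is not tangent to $\tilde\lambda_t$ (it crosses from one $\mathcal F$-leaf to another while the $\widetilde{\mathcal F}$-leaves are themselves transverse to $\mathcal F$ away from isolated tangency points — this needs a short argument, possibly shrinking $\varepsilon$). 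By construction $\lambda_t$ and $\tilde\lambda_t$ have a strict tangency at $\gamma(t)$; its non-degeneracy for small $t$ follows from openness of the regularity of the Hessian block, using continuity of the relevant second derivatives in $t$.

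\textbf{Step 3: From (2) to (1).} Conversely, if such a curve $\gamma$ exists then in particular $\lambda_0$ and $\tilde\lambda_0$ have a non-degenerate strict tangency at $\gamma(0)=p$, which is (1) — so this direction is essentially immediate from the hypothesis of (2). \textbf{The main obstacle} I anticipate is Step 2's verification that $\gamma$ meets the $\widetilde{\mathcal F}$-leaves \emph{transversely} (not just that it passes through tangency points of the two foliations): one must rule out $\gamma$ being tangent to $\tilde\lambda_t$, which requires understanding how $\widetilde{\boldsymbol N}$ varies, and a careful choice of the curve's direction — the implicit function theorem gives $\partial_z\gamma$, and one checks its pairing with $\widetilde{\boldsymbol N}_{\gamma(t)}$ is nonzero for small $t$ by continuity from $t=0$, where $\partial_z\gamma(0)\cdot\widetilde{\boldsymbol N}(\mathbf{0},0)$ equals the last component of $\partial_z\gamma(0)$, which is nonzero. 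The regularity/non-degeneracy propagation in $t$ is routine once the $t=0$ case and the relevant continuity are in hand.
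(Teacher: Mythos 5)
Your proposal is correct and follows essentially the same route as the paper: both hinge on the computation that the $\boldsymbol{x}$-block of $J\psi(\mathbf{0}^d)$ is $-Hh(\mathbf{0})$, then invert (implicit function theorem on $\psi$ for you, versus inverse function theorem on the augmented map $\varPsi(\boldsymbol{x},z)=(\psi(\boldsymbol{x},z),z)$ in the paper — an immaterial packaging difference) to get the curve, with propagation of non-degeneracy by continuity. One small remark: the ``main obstacle'' you flag — transversality of $\gamma$ to the $\widetilde{\mathcal F}$-leaves — is in fact automatic once you have established that $\lambda_t$ and $\tilde\lambda_t$ are \emph{strictly} tangent at $\gamma(t)$: strict tangency means $T_{\gamma(t)}\lambda_t = T_{\gamma(t)}\tilde\lambda_t$, so transversality of $\gamma$ to $\lambda_t$ at $\gamma(t)$ (which you have from the nonvanishing last component of $\partial_z\gamma$) immediately gives transversality to $\tilde\lambda_t$, with no continuity argument or shrinking of $\varepsilon$ needed; this is exactly how the paper disposes of the point. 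Your continuity argument via $\partial_z\gamma(t)\cdot\widetilde{\boldsymbol N}_{\gamma(t)}\neq 0$ also works, it is just a slightly longer route to the same fact. Your parenthetical about $\widetilde{\mathcal F}$-leaves being ``transverse to $\mathcal F$ away from isolated tangency points'' is a red herring and should be deleted — at $\gamma(t)$ they are tangent, and that is precisely what makes the conclusion immediate.
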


\begin{proof}
Since ``(2) $\Rightarrow$ (1)'' is obvious, we prove ``(1) $\Rightarrow$ (2)''.

Suppose that the tangency of $\lambda_0$ and $\tilde\lambda_0$ at $p$ is non-degenerate, or 
equivalently that
\begin{equation}\label{eqn_3}
\det\left(\partial_{x_ix_j}h(\mathbf{0})\right)\neq 0.
\end{equation}
Let $\varPsi_g=\varPsi:(-\delta,\delta)^d\to \mathbb{R}^d$ be the $C^1$ map defined as
\begin{equation}\label{eqn_4}
\begin{split}
\varPsi(\boldsymbol{x},z)&=(\psi(\boldsymbol{x},z),z)\\
&=\bigl(\widetilde{\boldsymbol{N}}_{(\boldsymbol{x},z)}\cdot \partial_{x_1}\varphi(\boldsymbol{x},z),\dots,
\widetilde{\boldsymbol{N}}_{(\boldsymbol{x},z)}\cdot \partial_{x_{d-1}}\varphi(\boldsymbol{x},z),\, z\bigr).
\end{split}
\end{equation}
By (\ref{eqn_0}) and (\ref{eqn_1}), the Jacobian matrix of $\varPsi$ at $(\mathbf{0},0)$ 
has the form
$$
J\varPsi(\mathbf{0},0)=
\begin{pmatrix}
-\partial_{x_1x_1}h(\mathbf{0})&\cdots
&-\partial_{x_1x_{d-1}}h(\mathbf{0})
&\partial_z\mu_{1}(\mathbf{0},0)+\partial_{x_1z}\alpha(\mathbf{0},0)\\
\vdots&\ddots&\vdots&\vdots\\
-\partial_{x_1x_{d-1}}h(\mathbf{0})&\cdots
&-\partial_{x_{d-1}x_{d-1}}h(\mathbf{0})
&\partial_z\mu_{d-1}(\mathbf{0},0)+\partial_{x_{d-1}z}\alpha(\mathbf{0},0)\\
0&\cdots&0&1
\end{pmatrix}.
$$
Then (\ref{eqn_3}) implies that $J\varPsi(\mathbf{0},0)$ is regular.
By the Inverse Function Theorem, there exists a $C^1$ local inverse  $\varPsi^{-1}:(-\varepsilon,\varepsilon)^d\to (-\delta,\delta)^d$ of $\varPsi$ with $\varPsi^{-1}(\mathbf{0},0)=(\mathbf{0},0)$ for 
a sufficiently small $\varepsilon>0$.
Consider the $C^1$ map $\gamma:(-\varepsilon,\varepsilon)\to U$ defined by $\gamma(t)=\varphi\circ \varPsi^{-1}(\mathbf{0},t)$.
Since the $d$-th entry of $\partial_z\varPsi^{-1}(\mathbf{0},t)$ is one, the map $\varPsi^{-1}(\mathbf{0},t)$ of $t$ defines 
a regular curve in $(-\delta,\delta)^d$ passing through the $z$-constant level surfaces transversely.
This implies that $\gamma(-\varepsilon,\varepsilon)$ is a regular curve in $U$ transverse to leaves of $\mathcal{F}$.
Let $\lambda_t$ (resp.\ $\tilde\lambda_t$) be the leaf of $\mathcal F$ (resp.\ $\widetilde{\mathcal F}$) 
containing $\gamma(t)$.
From the definition (\ref{eqn_4}) of $\varPsi$, we know that the normal vector $\widetilde{\boldsymbol{N}}(\varPsi^{-1}(\mathbf{0},t))$ of  $\tilde\lambda_t$ at $\gamma(t)$ 
is orthogonal to the tangent vectors $\partial_{x_i}\varphi(\varPsi^{-1}(\mathbf{0},t))$ 
$(i=1,\dots,d-1)$ of $\lambda_t$.
This shows that $\lambda_t$ is tangent strictly to $\tilde\lambda_t$ at $\gamma(t)$.
In particular, the curve $\gamma(-\varepsilon,\varepsilon)$ meets $\tilde\lambda_t$ transversely at $\gamma(t)$ as well as it does $\lambda_t$.

From the continuity of the matrix $J\varPsi$, one can assume that the strict tangency of $\lambda_t$ and $\tilde\lambda_t$ is non-degenerate for any $t\in (-\varepsilon,\varepsilon)$ if necessary 
replacing $\varepsilon$ by a smaller positive constant.
This completes the proof. 
\end{proof}

\begin{corollary}\label{cor5.2}
If the condition {\rm (1)} in Proposition \ref{prop5.1} holds, then there is an open neighborhood 
$\mathcal{O}_g$ of $g$ in $\Diff^2(M)$ such that, for any $\tilde g$ in $\mathcal{O}_g$, there exists a 
$C^1$ regular curve $\gamma_{\tilde g}:(-\varepsilon,\varepsilon)\to M$ satisfying the condition 
on $\tilde g$ corresponding to {\rm (2)} of Proposition \ref{prop5.1} and depending on 
$\tilde g\in \mathcal{O}_g$ continuously.
\end{corollary}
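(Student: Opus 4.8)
The plan is to upgrade the single-diffeomorphism conclusion of Proposition \ref{prop5.1} to a statement that is robust and varies continuously, by observing that every ingredient in the construction there is built out of data that depends continuously (in the appropriate $C^1$ sense) on the diffeomorphism. First I would recall that the basic sets $\Lambda_0$ and $\Lambda_2$, being hyperbolic and locally maximal, have continuations $\Lambda_{0,\tilde g}$, $\Lambda_{2,\tilde g}$ for every $\tilde g$ in a $C^2$ neighborhood of $g$, and that the associated stable and unstable foliations $\mathcal F_{\tilde g}$, $\widetilde{\mathcal F}_{\tilde g}$ of these basic sets depend continuously on $\tilde g$ in the $C^1$ topology (with uniformly $C^2$ leaves). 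Consequently the $C^1$ coordinate straightening $\varphi_{\tilde g}$ in (\ref{eqn_varphi}), the unit normal field $\widetilde{\boldsymbol N}_{(\boldsymbol x,z)}$ to the leaves of $\widetilde{\mathcal F}_{\tilde g}$, and hence the $C^1$ map $\psi_{\tilde g}$ and its extension $\varPsi_{\tilde g}$ defined in (\ref{eqn_4}) all depend continuously on $\tilde g$.

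Next I would note that the hypothesis of Proposition \ref{prop5.1}(1) — the existence of leaves $\lambda_0\subset\mathcal F$, $\tilde\lambda_0\subset\widetilde{\mathcal F}$ with a non-degenerate strict tangency at a point $p$ — is itself an open condition once one has a non-degenerate tangency to start with, because this is exactly the regularity of $J\varPsi(\mathbf 0^d)$, which is an open condition on $\varPsi_{\tilde g}$. More precisely: for $g$ there is a point $p_g$ where $\det J\varPsi_g(p_g)\neq 0$, and for $\tilde g$ close to $g$ one still finds such a point $p_{\tilde g}$ depending continuously on $\tilde g$ (after recentering the $C^2$ coordinate so that the relevant leaf of $\mathcal F_{\tilde g}$ becomes the level surface $z=0$). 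Feeding $(\tilde g, p_{\tilde g})$ into the argument of Proposition \ref{prop5.1}, the Inverse Function Theorem produces a $C^1$ local inverse $\varPsi_{\tilde g}^{-1}$ on a fixed-size box $(-\varepsilon,\varepsilon)^d$ — uniform in $\tilde g$ because $J\varPsi_{\tilde g}$ is uniformly bounded away from singular on a neighborhood of $p_{\tilde g}$ — and the curve $\gamma_{\tilde g}(t)=\varphi_{\tilde g}\circ\varPsi_{\tilde g}^{-1}(\mathbf 0,t)$ then satisfies, verbatim by the proof of the proposition, the analogue of condition (2): it is $C^1$ regular, transverse to both foliations, and realizes a non-degenerate strict tangency between leaves of $\mathcal F_{\tilde g}$ and $\widetilde{\mathcal F}_{\tilde g}$ at each of its points. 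Continuous dependence of $\gamma_{\tilde g}$ on $\tilde g$ follows from continuous dependence of $\varphi_{\tilde g}$ and $\varPsi_{\tilde g}^{-1}$, the latter via continuity in the Inverse Function Theorem.

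The main obstacle I anticipate is the continuous (indeed, merely $C^1$) dependence of the invariant foliations $\mathcal F_{\tilde g}$ and $\widetilde{\mathcal F}_{\tilde g}$ on $\tilde g$, together with the fact that the construction of Proposition \ref{prop5.1} uses second derivatives in the $\boldsymbol x$-directions of the leaves (the Hessian $\partial_{x_ix_j}h$) while the foliations are only $C^1$ transversally. One must therefore be careful that the leafwise $C^2$ regularity — and the continuous dependence of the leafwise second-order jets — is preserved under perturbation; this is where one invokes the fact (used already in Section \ref{S3} and in Palis–Takens \cite{PT93}, Palis–Viana \cite{PV94}) that stable and unstable foliations of hyperbolic sets have uniformly $C^2$ leaves varying continuously in the $C^2$ sense along with $\tilde g$. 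Granting this, everything else is a routine openness-plus-continuity packaging of the already-established Proposition \ref{prop5.1}, and I would present it as such, recording $\mathcal O_g$ as the intersection of the $C^2$ neighborhood on which the continuations exist with the open set on which $\det J\varPsi_{\tilde g}(p_{\tilde g})\neq 0$.
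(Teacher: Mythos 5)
Your argument is essentially the paper's: both hinge on continuous dependence of the stable/unstable foliations on $\tilde g$ (the paper cites Pollicott's Propositions 1 and 2 in \cite{Pol03}, you cite the Palis--Takens/Palis--Viana machinery), which then makes $\varphi_{\tilde g}$ and $\varPsi_{\tilde g}$ continuous in $\tilde g$, so the Inverse Function Theorem construction of $\gamma_{\tilde g}(t)=\varphi_{\tilde g}\circ\varPsi_{\tilde g}^{-1}(\mathbf 0,t)$ from Proposition \ref{prop5.1} goes through uniformly over a neighborhood $\mathcal{O}_g$. Your explicit flag of the tension between merely $C^1$ transversal regularity of the foliations and the need for leafwise $C^2$ data (and its continuous dependence) is a legitimate and useful precision that the paper leaves implicit behind the Pollicott citation.
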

\begin{proof}
According to Propositions 1 and 2 in Pollicott \cite{Pol03}, there exists a small open neighborhood $\mathcal{O}_g$ of $g$ in $\Diff^2(M)$ such that, for any $\tilde g\in \mathcal{O}_g$, there are stable and unstable foliations associated respectively with $\Lambda_{0,\tilde g}$ and $\Lambda_{2,\tilde g}$ which $C^0$ vary on $\mathcal{O}_g$.
It follows that the $C^1$ map $\varPsi_{\tilde g}$ defined as (\ref{eqn_4}) 
and its inverse $\varPsi_{\tilde g}^{-1}$ depend on $\tilde g\in \mathcal{O}_g$ continuously.
Thus the $C^1$ regular curve $\gamma_{\tilde g}:(-\varepsilon,\varepsilon)\to U$ defined by 
$\gamma_{\tilde g}(t)=\varphi_{\tilde g}\circ \varPsi_{\tilde g}^{-1}(\mathbf{0},t)$ satisfies 
the condition (2) of Proposition \ref{prop5.1} and depends continuously on $\tilde g\in \mathcal{O}_g$, 
where $\varphi_{\tilde g}:(-\varepsilon,\varepsilon)^d\to U$ is a diffeomorphism defined from $\tilde g$ 
as (\ref{eqn_varphi}).
\end{proof}

\section{Proof of Theorem \ref{theoremA}}\label{S6}

\begin{proof}[Proof of Theorem \ref{theoremA}]
By combining results which have been obtained before this section, 
we have a sequence $\{f_n\}$ in $\Diff^2(M)$ $C^2$ converging to $f$ and satisfying the following 
conditions (i)-(iv)
\begin{enumerate}[(i)]
\item
By Proposition \ref{prop4.3}, each $f_n$ has saddle periodic points $Q_{f_n}$, $P_{f_n}$ and nontrivial 
basic sets $\Lambda_{0,f_n}$, $\Lambda_{1,f_n}$, $\Lambda_{2,f_n}$ such that $Q_{f_n}\in \Lambda_{0,f_n}$, $P_{f_n}\in 
\Lambda_{1,f_n}$, $\mathrm{index}(\Lambda_{0,f_n})=d-1$, $\mathrm{index}(\Lambda_{1,f_n})=\mathrm{index}(\Lambda_{2,f_n})=1$, and $\Lambda_{1,f_n}$ and $\Lambda_{2,f_n}$ are homoclinically related, where $\Lambda_{0,f_n}$ is the continuation of the basic set $\Lambda_{0}=
\Lambda$ in Theorem \ref{theoremA}.
Moreover, $W^u(\Lambda_{0,f_n},f_n)$ and $W^s(\Lambda_{2,f_n},f_n)$ have a heterodimensional tangency $r_n$ of elliptic type.
\item
By Corollary \ref{cor4.4}, 
we may assume that both the leaves of $W^u(\Lambda_{0,f_n},f_n)$ and $W^s(\Lambda_{2,f_n},f_n)$ containing $r_n$ are 
two-sided.
\item
By Proposition \ref{prop5.1} and Corollary \ref{cor5.2}, there is an open neighborhood $\mathcal{O}_n$ of $f_n$ in $\Diff^2(M)$ such that, for any $g\in \mathcal{O}_n$, there exists a $C^1$ regular curve $\gamma_g:(-\varepsilon,\varepsilon)\to M$ which satisfies the condition 
corresponding to (2) of Proposition \ref{prop5.1} and depends continuously on 
$g\in \mathcal{O}_n$.
Furthermore, in the case of $g=f_n$, the curve $\gamma_{f_n}$ satisfies $\gamma_{f_n}(0)=r_n$.
\item
By (\ref{newhouse condition}) together with the local continuity of thickness in the $C^2$ topology, 
$$
\tau^u(\Lambda_{0,g})\tau^s(\Lambda_{2,g})>1
$$
for any $g\in \mathcal{O}_n$, where $\Lambda_{0,g}$, $\Lambda_{2,g}$ are the continuations of 
$\Lambda_{0,f_n}$, $\Lambda_{2,f_n}$ respectively.
\end{enumerate}

\medskip

Let $K_{0,g}$ and $K_{2,g}$ be Cantor sets defined as $K^u$  and $K^s$ in Section \ref{S3}.
By (ii) and (iii), we may assume that $K_{0,g}$ and $K_{2,g}$ are linked for any $g\in \mathcal{O}_n$ if 
necessary replacing $\mathcal{O}_n$ by a smaller open neighborhood of $f_n$.
Since $\tau(K_{0,g})\tau(K_{2,g})>1$ by (iv), the Gap Lemma (\cite{N79,PT93}) implies that 
$K_{0,g}\cap K_{2,g}\neq \emptyset$ for any $g\in \mathcal{O}_n$.
For any $t\in K_{0,g}\cap K_{2,g}$, $\gamma_g(t)$ is a heterodimensional tangency of elliptic type 
associated with $\Lambda_{0,g}$ and $\Lambda_{2,g}$.
Thus the union $\mathcal{O}=\bigcup_{n=1}^\infty \mathcal{O}_n$ is an open set of $\Diff^2(M)$ whose 
closure contains $f$ and such that each $g\in \mathcal{O}$ has a heterodimensional tangency of elliptic type 
associated with the basic sets $\Lambda_{0,g}$ and $\Lambda_{2,g}$.
This completes the proof of Theorem \ref{theoremA}.
\end{proof}

\begin{remark}\label{remark6.1}
By Proposition \ref{prop4.3} (1), one can choose the open set $\mathcal{O}_n$ in the proof of 
Theorem \ref{theoremA} so that, for 
any $g\in \mathcal{O}_n$, $\Lambda_{1,g}$ is homoclinically related to $\Lambda_{2,g}$.
\end{remark}

\section{Proof of Theorem \ref{theoremB}}\label{S7}
To prepare for the proof of Theorem \ref{theoremB}, 
let us  introduce several definitions in the $C^{2}$ topology.
Let $f\in \Diff^{2}(M)$ with $\dim M=d\geq 3$. 
A nontrivial basic set $\Lambda_{0}$ of $\index(\Lambda_0)=d-1$   
is a \emph{cu-blender} for $f$ if 
there exist a neighborhood $\mathcal{U}_{f}\subset \Diff^{2}(M)$ of $f$  and 
a $C^{1}$ open set $\mathcal{D}$ of  
$(d-2)$-dimensional disks $D$ embedded in $M$ 
 such that for every $g\in \mathcal{U}_{f}$ and every $D\in \mathcal{D}$, 
 $$
 W^{s}_{\loc}(\Lambda_{0,g},g)\cap D\neq \emptyset.
 $$
The set $\mathcal{D}$  is called the \emph{superposition region} of 
the cu-blender. See Definition 3.1 in \cite{BD12, BDK12}. 

In fact, the cu-blenders considered in previous works of  Bonatti-D\'iaz 
belong to a special class of blenders, called  \emph{blender-horseshoes}, see
\cite[\S 1]{BD96}, \cite[Definition 3.8]{BD12}.
In these works, the blender-horseshoe $\Lambda_{0}$ 
is the maximal invariant set of $f$ in a $d$-dimensional `cube' $\Delta$ in $M$ 
and has a hyperbolic splitting with three nontrivial bundles 
$T_{\Lambda_{0}}M=E^{s}\oplus E^{cu}\oplus E^{uu}$ 
where $E^{s}$ is the stable bundle with $\dim E^{s}=1$  
and $E^{u}=E^{cu}\oplus E^{uu}$ is  the unstable bundle with  $\dim E^{cu}=1$.
Moreover, there exists an integer  $k>0$ such that 
$f^{k}\vert \Lambda_{0}$ is topologically conjugate to the complete shift of two symbols. 
In practice of \cite[\S 1]{BD96}, 
each of the bundles is obtained as a 
limit of stable, unstable, strong unstable cones 
 $\mathcal{C}^s$, $\mathcal{C}^{uu}$, $\mathcal{C}^u$ defined on $\Delta$,
 respectively. In particular,  
$$E^{s}\subset \mathcal{C}^s,\ 
E^{uu}\subset \mathcal{C}^{uu}\subset \mathcal{C}^u,\  
E^{cu}\oplus E^{uu}\subset \mathcal{C}^{u}.$$
Note that the construction of the blender-horseshoe 
implies that $\Delta\cap f^{k}(\Delta)$  consists of two components 
each of which contains a \emph{distinguished} saddle periodic point in $\Lambda_{0}$.
We may suppose that  
one of them is  
a fixed  point, say $Q$,  
while the other is 
$k$-periodic point,  say $Q^{\prime}$.  See Figure \ref{fig6_1}.

We consider \emph{vertical disks} $D$ through 
the blender-horseshoe, 
that is, each $D$ is a $(d-2)$-dimensional  disk tangent to $\mathcal{C}^{uu}$
and joining the `top' and the `bottom' of the cube $\Delta$.
Then there are two isotopy classes of vertical disks that do not intersect 
$W^{s}_{\loc}(Q, f)$ (respectively $W^{s}_{\loc}(Q^{\prime}, f^{k})$), called 
disks at the right and at the left of $W^{s}_{\loc}(Q, f)$ (resp.\ $W^{s}_{\loc}(Q^{\prime}, f^{k})$). 
For example,  $W^{uu}_{\loc}(Q, f)$ (that is a vertical disk) is at the left of 
$W^{s}_{\loc}(Q^{\prime}, f^{k})$ as in Figure \ref{fig6_1}. Similarly, 
$W^{uu}_{\loc}(Q^{\prime}, f^{k})$  is at the right of 
$W^{s}_{\loc}(Q, f)$. Note that 
the superposition region $\mathcal{D}$ of the  blender-horseshoe 
consists of the vertical disks in between $W^{s}_{\loc}(Q, f)$ and 
$W^{s}_{\loc}(Q^{\prime}, f^{k})$. 
See for more details in \cite{BD96,BDVbook,BD12, BDK12}.

\begin{figure}[hbt]
\centering
\scalebox{0.82}{\includegraphics[clip]{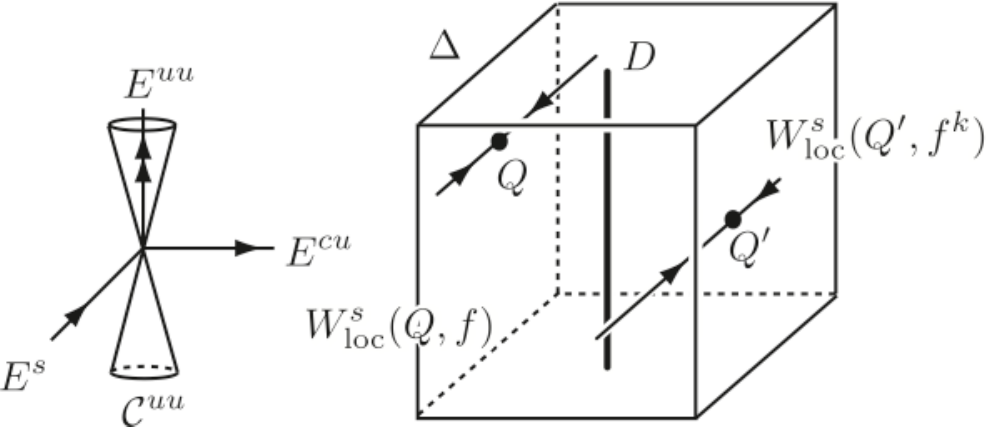}}
\caption{Vertical disks for a blender-horseshoe.
}
\label{fig6_1}
\end{figure}

\begin{proof}[Proof of Theorem \ref{theoremB}]
Let $f$ be a  $d$-dimensional $C^{2}$ diffeomorphism with 
the nontrivial basic sets $\Lambda_0$, $\Lambda_1$
satisfying the same conditions as in (1), (2) of Theorem \ref{theoremA}.
Moreover, we 
suppose that the $\Lambda_0$  is a blender-horseshoe of $\index(\Lambda_0)=d-1$, 
which is the maximal invariant set of a $d$-dimensional cube $\Delta$.
From (1), $P\in \Lambda_1$ is of index one. 
As to the unstable manifold $W^{u}(P,f)$ of $P$, 
we furthermore suppose  that 
there exist 
a segment $L^u_{f}$ of $W^{u}(P,f)$, 
a constant $\delta_{0}>0$ and 
a $C^{2}$ embedding  $D: [-1,1] \times[-\delta_{0}, \delta_{0}]^{d-3} \to \Delta$ 
such that 
\begin{itemize}
\item $D([-1,1] \times \{\boldsymbol{0}^{d-3}\})=L^u_{f}$;
\item for any $0<\delta<\delta_{0}$, 
$D_\delta (L_f^u):=D([-1,1]  \times[-\delta, \delta]^{d-3})$ 
is a vertical disk through the blender-horseshoe $\Lambda_{0}$ 
and contained in the superposition region $\mathcal{D}$.
\end{itemize}

Let $\mathcal{O}$ be the open set obtained in Theorem \ref{theoremA}.
Then the closure of $\mathcal{O}\cap \mathcal{U}_{f}$ contains $f$.
By Theorem \ref{theoremA}, for every $g\in \mathcal{O}\cap \mathcal{U}_{f}$, 
$W^{u}(\Lambda_{0,g},g)$ 
and 
$W^{s}(\Lambda_{2,g},g)$  
have a heterodimensional tangency of elliptic type.
Note that the blender-horseshoe is 
robust for any small $C^r$ perturbation with $r \geq 1$, see \cite[Remark 3.2]{BDK12}.
Hence, one has 
a disk $D_{\delta}(L^u_{g})$ in $\Delta$ with $L^u_{g}\subset W^{u}(P_{g},g)$ such that  
$D_{\delta}(L^u_{g})\to D_{\delta}(L^u_{f})$
as $g\to f$.
Thus, we may assume that $D_{\delta}(L^u_{g})$
still belongs to  $\mathcal{D}$.
Hence, in particular, 
$$
W^{s}_{\loc}(\Lambda_{0,g},g)\cap D_{\delta}(L^u_{g})\neq\emptyset.
$$

By Theorem \ref{theoremA} and Remark \ref{remark6.1}, the basic set $\Lambda_{2,g}$ is nontrivial and homoclinically related to $\Lambda_{1,g}$.
Therefore, there exists a segment $\tilde L_g^u$ in $W^u(\tilde P,g)$ for some $\tilde P\in \Lambda_{2,g}$ 
such that 
$\tilde L_g^u$ is arbitrarily $C^{1}$ close to $L_g^u$ and 
$D_{\delta}(\tilde L_g^u)$  is also contained in  $\mathcal{D}$.
Thus,
$$
W^{s}_{\loc}(\Lambda_{0,g},g)\cap D_{\delta}(\tilde L_g^u)\neq\emptyset.
$$ 
Since $W^{s}_{\loc}(\Lambda_{0,g},g)$ is closed subset of $M$,  
$W^{s}_{\loc}(\Lambda_{0,g},g)$ and $\tilde L_g^u$
have non-empty intersection.
It follows that 
$$W^{s}(\Lambda_{0,g},g)\cap W^{u}(\Lambda_{2,g},g)\neq \emptyset.$$
This completes the proof of Theorem \ref{theoremB}.
\end{proof}

\section{Proof of Theorem \ref{theoremC}}\label{S8}

Finally, let us prove Theorem \ref{theoremC}. For that purpose, 
we first consider a $C^{2}$ diffeomorphism $f$ having nontrivial basic sets 
$\Lambda_{0}$, $\Lambda_{1}$ with the following $C^{2}$ open conditions.
\begin{enumerate}[(i)]
\item Each periodic point in $\Lambda_{0}$, $\Lambda_{1}$ satisfies the same condition as in (1) of Theorem \ref{theoremA}. 
\item
$f$ has a spherical heterodimensional intersection on the heterodimensional cycle
associated with $\Lambda_{0}$ and $\Lambda_{1}$.
\item
$\Lambda_{0}$ is a  blender-horseshoe.
\end{enumerate}
Let $\mathcal{D}$ be the superposition region of $\Lambda_{0}$. 
We denote the union  $\cup_{D\in \mathcal{D}} D$ 
by  $|\mathcal{D}|$.
As was mentioned above, 
$\Delta\cap f^{k}(\Delta)$ consists of two components.
Note that $|\tilde{\mathcal{D}}|:=|\mathcal{D}|\setminus  (\Delta\cap f^{k}(\Delta))$ is disjoint from 
$\Lambda_{0,f}$.
We say that a segment $L$ is \emph{in superposition} in  $|\tilde{\mathcal{D}}|$ if 
\begin{itemize}
\item $L\subset |\tilde{\mathcal{D}}|$ and  $TL\subset \mathcal{C}^{uu}$;
\item there exists a constant $\delta_{0}>0$ and 
a $C^{2}$ embedding  $D: [-1,1] \times[-\delta_{0}, \delta_{0}]^{d-3} \to \Delta$ 
 for any $0<\delta<\delta_{0}$ which satisfies 
 $$
 D([-1,1] \times\{\boldsymbol{0}^{d-3}\})=L,\quad 
D([-1,1] \times[-\delta, \delta]^{d-3})\in \mathcal{D}.
 $$
\end{itemize}
In addition to (i)--(iii), 
we suppose the following condition:
\begin{enumerate}[(iv)]
\item
There exist  a saddle periodic point $P$ in $\Lambda_{1}$ 
such that the unstable manifold $W^{u}(P,f)$ contains  segments 
$\ell_f^u, L_f^u$  in superposition in $|\tilde{\mathcal{D}}|$ 
whose orbits $\mathcal{O}(\ell_f^u,f)=\bigcup_{n\in\mathbb{Z}} f^{n}(\ell_f^u)$
and $\mathcal{O}(L_f^u,f)= \bigcup_{n\in\mathbb{Z}} f^{n}(L_f^u)$ are disjoint.
\end{enumerate}
Note that there exists an open subset   of $\Diff^{2}(M)$ 
whose element satisfies the condition (iv).  
In fact,  
one can construct an open set of examples 
from a certain diffeomorphism $f_{0}\in \Diff^{2}(M)$ with
 a \emph{partially hyperbolic saddle-node} periodic point $S$
with at least tree disjoint orbits of  \emph{strong homoclinic intersections}, 
i.e., 
\begin{itemize}
\item $Df_{0}^{\mathrm{per}(S)}(S)$ has eigenvalues 
$\alpha$, $\gamma_{1},\ldots,\gamma_{d-2}$, $\beta$ with 
$|\alpha|<1<|\beta|$, $|\gamma_{1}|=\cdots=|\gamma_{d-2}|=1$ and such that 
$\alpha$, $\beta$ are real and at least one of $\{\gamma_{i}\}$ is $1$.
\item 
Consider the strong unstable and stable invariant directions $E^{ss}$, $E^{uu}$ respectively corresponding to the eigenvalues $\alpha$, $\beta$  of $Df_{0}^{\mathrm{per}(S)}(S)$. 
The strong unstable manifold 
$W^{uu}(S,f_{0})$ of $S$ is the unique 
$f_{0}$-invariant manifold tangent to $E^{uu}$ of the same dimension as $E^{uu}$. 
The strong stable manifold $W^{ss}(S,f_{0})$ of $S$ is defined similarly by using $E^{ss}$ 
instead of $E^{uu}$.
 \item 
 $W^{ss}(S,f_{0})\cap W^{uu}(S,f_{0})$ contains at least three different orbits  which do not belong to the orbit of $S$. See Figure \ref{fig6_2}-(a).
\end{itemize}
Let us explain how such examples are constructed.
For simplicity,  we assume that $d=3$, 
$S$ is the saddle-node fixed point  and 
$W^{ss}(S,f_{0})\cap W^{uu}(S,f_{0})\setminus\{S\}$ 
contains three points
$X$, $Y$, $Z$ any one of which is not contained in the orbit of the other ones.
Suppose moreover that these points are quasi-transverse intersections associated with 
 $W^{ss}(S, f_{0})$ and  $W^{uu}(S, f_{0})$. 
After a small $C^{2}$ perturbation of $f_{0}$, 
we can have a diffeomorphism $f$ such that the  saddle-node fixed point $S$ splits into two hyperbolic fixed points 
$Q$ (expanding in the central direction) and $P$ (contracting in the central direction).
The saddle points $Q$ and $P$  have different indices and $W^{s}(P)$ and $W^{u}(Q)$ have a transverse intersection that contains the interior of a `central' curve jointing $Q$ and $P$. 
Moreover, from  \cite[\S 3.3]{BDK12}, 
$f$ has the following properties.
\begin{itemize}
\item There exists a $3$-dimensional cube $\Delta$ and an integer $k>0$ such that 
$\Delta \cap f^{k}(\Delta)$ consists of two disjoint components 
which respectively contain  $Q$ and 
a point $Z_{f}\in W^{u}(P,f)$ converging to $Z$ as $f\to f_{0}$. 
\item The maximal invariant set in $\Delta$ is 
a blender-horseshoe $\Lambda_{f}$ with distinguished fixed point $Q$ 
and $k$-periodic point $Q^{\prime}$. 
\end{itemize} 
Note that 
it has the superposition region $\mathcal{D}$ 
between $W^{s}_{\loc}(Q, f)$ and $W^{s}_{\loc}(Q^{\prime}, f^{k})$. 
\begin{figure}[hbt]
\centering
\scalebox{0.82}{\includegraphics[clip]{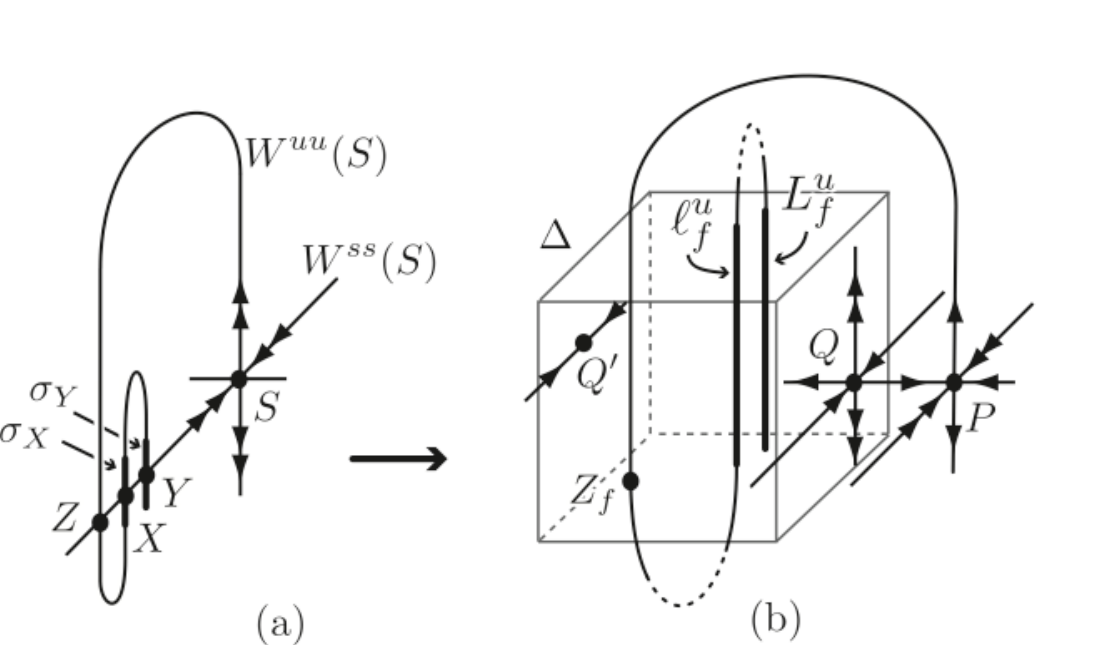}}
\caption{(a) 
Strong homoclinic intersections associated with $S$.
 (b) $\ell^{u}_{f}$ and  $L^{u}_{f}$ 
in $\tilde{\mathcal{D}}$.
}
\label{fig6_2}
\end{figure}

Since  the orbits $\mathcal{O}(X, f_{0})$ of $X$ and  $\mathcal{O}(Y, f_{0})$ of $Y$ are disjoint, 
one has small segments 
$\sigma_{X}, \sigma_{Y} \subset W^{uu}(S, f_{0})$  
with $X\in \mathrm{Int}\sigma_{X}$, $Y\in  \mathrm{Int}\sigma_{Y}$ such that 
$$\mathcal{O}(\sigma_{X}, f_{0})\cap  \mathcal{O}(\sigma_{Y},f_{0})=\emptyset.$$
Thus, for any $f$ sufficiently $C^{2}$-close to $f_{0}$, 
\begin{equation}\label{eqn_1}
\mathcal{O}(\sigma_{X_{f}}, f)\cap  \mathcal{O}(\sigma_{Y_{f}},f)=\emptyset
\end{equation}
where $\sigma_{X_f}, \sigma_{Y_f} \subset W^{uu}(S, f)$ are segments such that $\sigma_{X_{f}}\to \sigma_{X}$ and  $\sigma_{Y_{f}}\to \sigma_{Y}$ as $f\to f_{0}$. 
We may assume that 
$T_{X}\sigma_{X}$ and $T_{Y}\sigma_{Y}$ are not equal to the central direction of $S$ if 
necessary slightly $C^2$-perturbing $f$.
Hence, 
for sufficiently large integer $m>0$, we obtain
segments  $\ell_f^u\subset f^{m}(\sigma_{X_{f}})$  
$L_f^u\subset f^{m}(\sigma_{Y_{f}})$ which are  in superposition in $|\tilde{\mathcal{D}}|$.
Moreover, by (\ref{eqn_1}), we have $\mathcal{O}(\ell_f^u, f)\cap  \mathcal{O}(L_f^u,f)=\emptyset$.
Observe that 
the above property is open and corresponds to  (iv).

\begin{proof}[Proof of Theorem \ref{theoremC}] 
To prove this theorem, 
we have only to show that an arbitrarily small $C^{2}$ neighborhood of the above $f$ with (i)--(iv)   
contains a diffeomorphism $g$ having a heterodimensional tangency on a heterodimensional cycle associated with 
saddle periodic points which satisfy (1) and (2) of Theorem \ref{theoremA}.

By the conditions (i)--(ii), we have points  
$q_{1}\in \Lambda_{0,f}$  and $p_{1}\in \Lambda_{1, f}$ such that 
$W^{u}(q_{1},f)\pitchfork W^{s}(p_{1},f)$ contains  
a $(d-2)$-dimensional sphere $S^{d-1}_{f}$. See Figure \ref{fig6_3}. 
Moreover, by (iii)-(iv), 
there exist points $q_2\in \Lambda_{0,f}$, $p_{2}\in \Lambda_{1, f}$ and disjoint segments
 $\ell_f^u, L_{f}^{u}\in W^{u}(p_2,f)$ such that $\ell_f^u, L_{f}^{u}$ are  in 
 superposition $|\tilde{\mathcal{D}}|$   and 
$W_{\loc}^s(q_2,f)\cap \ell_f^u\neq \emptyset$.
Note that, in general,
$q_{1}, p_{1}, q_{2}$ are not periodic points, or 
$W_{\loc}^s(q_2,f)$ does not intersect  with $L_{f}^{u}$.
Since $T\ell_{f}^{u}\subset \mathcal{C}^{uu}$ and $TW^{s}_{\loc}(q_{2},f)\subset  \mathcal{C}^{s}$,
$W_{\loc}^s(q_2,f)\cap \ell_f^u$ consists of a 
single quasi-transverse intersection $X$, i.e., 
$T_{X}\ell_{f}^{u}+T_{X}W^{s}_{\loc}(q_{2},f)=T_{X}\ell_{f}^{u}\oplus T_{X}W^{s}_{\loc}(q_{2},f)$.

\begin{figure}[hbt]
\centering
\scalebox{0.8}{\includegraphics[clip]{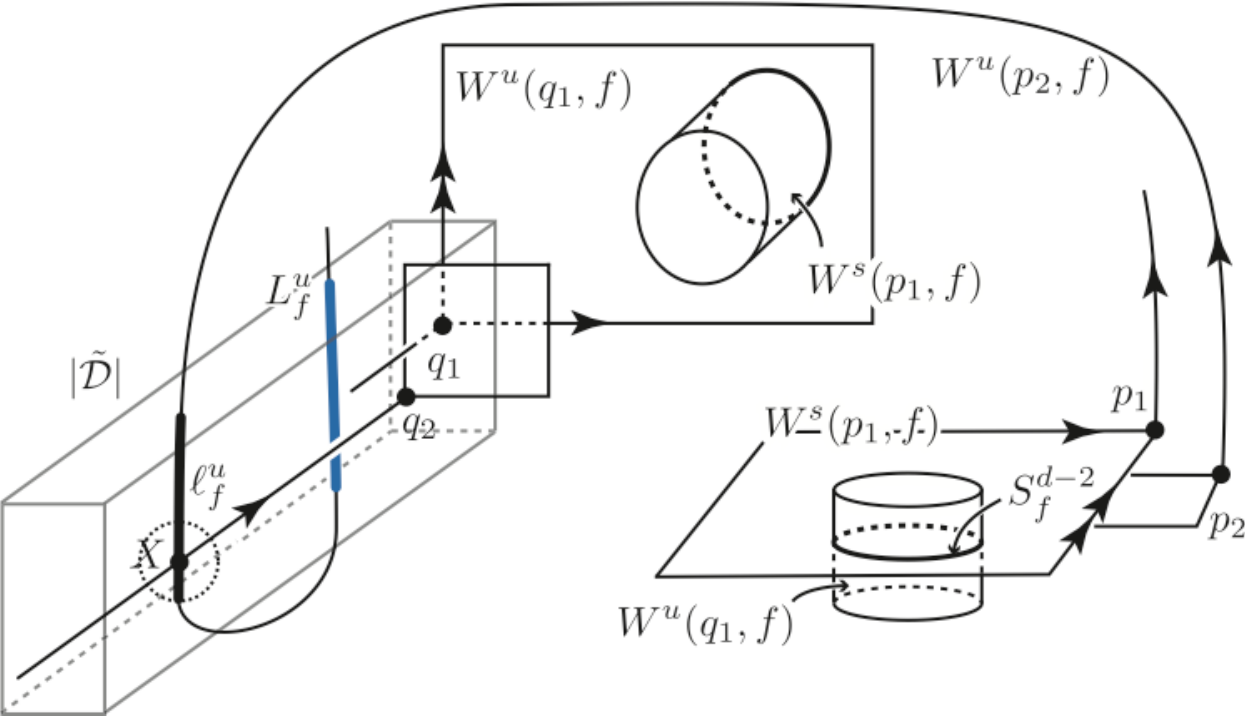}}
\caption{Configuration of primary items of $f$.
}
\label{fig6_3}
\end{figure}

To obtain our desired diffeomorphism $g$, we will add $C^{2}$ local perturbations to $f$ three times.
\smallskip

\noindent
\textbf{(I) The 1st perturbation from $f$ to $\tilde f$.}
We here consider a local $C^{2}$ perturbation of $f$ in a small neighborhood of $X$ 
obtained by a similar method as in Section \ref{sec2} as below.
For any arbitrarily small $\varepsilon>0$, 
let $(x_{1},\ldots, x_{d})$ be a local coordinate on the $\varepsilon$-neighborhood $U_{\varepsilon}$ 
of $X$ 
such that the $(x_{1},x_{2})$-plane corresponds to $T_{X}\ell_{f}^{u}+T_{X}W^{s}_{\loc}(q_{2},f)$ 
and the $(x_{3},\ldots, x_{d})$-space  is the orthogonal complement.
Consider a $C^{2}$ bump function 
$
B(x_{1},\ldots, x_{d})=\prod_{i=1}^{d} b(x_{i})
$
where $b$ is a  $C^{2}$ function  on $\mathbb{R}$ 
satisfying 
$$
\begin{cases}
b(x)= 0\ & \mbox{if}\  \varepsilon \leq \vert x\vert; \\
0< b(x) <1\ & \mbox{if}\  \varepsilon/2<|x|<\varepsilon;\\
b(x)= 1\  &  \mbox{if}\  \vert x\vert  \leq  \varepsilon/2.
\end{cases} 
$$
Fix $\varepsilon_0>0$ sufficiently smaller than $\varepsilon$.
For any $\mu\in (-\varepsilon_0, \varepsilon_0)$,  let $\varphi_{\mu}$ be a family of perturbations in $\Diff^{2}(M)$ such that 
if $(x_{1},\ldots, x_{d})\in U_{\varepsilon}$, 
$$ 
\varphi_{\mu}(x_{1},\ldots, x_{d})=(x_{1}, x_{2}, 0,\ldots,0)+
\mu B(x_{1},\ldots, x_{d})(0,0,1,\ldots,1),
$$
otherwise, $\varphi_{\mu}$ is the identity. 

Let us define $f_{\mu}:=\varphi_{\mu}\circ f$.
Since $U_{\varepsilon}$ is contained   in  $\tilde{|\mathcal{D}|}$, 
the above perturbation will not affect $\Lambda_{0}$, that is, $\Lambda_{0,f_{\mu}}=\Lambda_{0}$ for every $\mu$. 
Moreover, observe that $W^{s}_{\loc}(\Lambda_{0}, f_{\mu})=W^{s}_{\loc}(\Lambda_{0}, f)$ for every 
$\mu\in (-\varepsilon_0, \varepsilon_0)$, 
while $\ell_{f_{\mu}}^{u}$ and $W^{s}_{\loc}(q_{2},  f_{\mu})$  no longer meet in $U_{\varepsilon}$ 
if $\mu\neq 0$.
However, since periodic points are dense in the basic set $\Lambda_{0}$, 
by the Intermediate Value Theorem, 
one can obtain $\tilde\mu \in (-\varepsilon_0, \varepsilon_0)\setminus\{0\}$
such that 
the diffeomorphism $\tilde f:=f_{\tilde\mu}$  
satisfies the following conditions.
\begin{itemize}
\item There are  a periodic point
$\tilde{q}_{2}\in  \Lambda_{0}$
arbitrarily near $q_{2}$, a periodic point
$\tilde{p}_{2}\in \Lambda_{1}$  
arbitrarily near $p_{2}$ 
and segments $\ell_{\tilde{f}}^{u}$, $L_{\tilde{f}}^{u}$ of
$W^{u}(\tilde{p}_{2}, \tilde{f})$ with $\ell_{\tilde{f}}^{u}\to \ell_{f}^{u}$, $L^u_{\tilde{f}}\to L^{u}_{f}$ as $\tilde{f}\to f$ such that 
$\ell_{\tilde{f}}^{u}$, $L_{\tilde{f}}^{u}$ are  in $|\tilde{\mathcal{D}}|$, 
$\ell_{\tilde{f}}^{u}$ and  $W_{\loc}^{s}(\tilde{q}_{2}, \tilde{f})$ have a quasi-transverse intersection $\tilde X$. See Figure \ref{fig6_4}.
\end{itemize}
\smallskip

\begin{figure}[hbt]
\centering
\scalebox{0.77}{\includegraphics[clip]{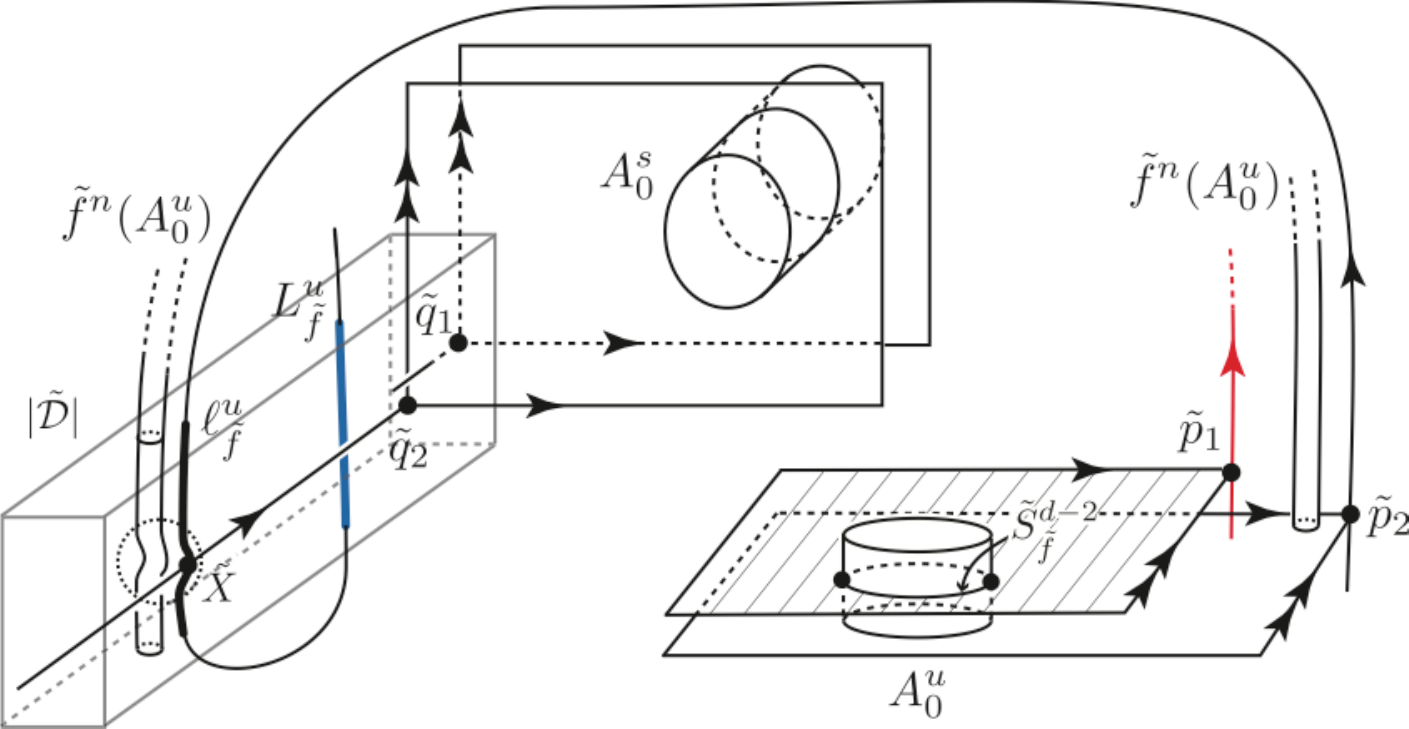}}
\caption{$\tilde{q}_{1}, \tilde{q}_{2} \in  \Lambda_{0}$ and $\tilde{p}_{1}, \tilde{p}_{2}\in   \Lambda_{1}$.
}
\label{fig6_4}
\end{figure}

\noindent
\textbf{(II) The 2nd perturbation from $\tilde f$ to $\tilde g$.}
On the other hand, since the $(d-2)$-dimensional sphere $S^{d-2}_{f}$ is contained in 
the transverse intersection $W^{u}(q_{1},f)\pitchfork W^{s}(p_{1},f)$ and 
periodic points are dense in the basic sets $\Lambda_{0}$, $\Lambda_{1}$, one has the following property:
\begin{itemize}
\item 
for every $\tilde{f}$ sufficiently $C^{2}$-close to $f$, there exists a periodic point
$\tilde{q}_{1}\in  \Lambda_{0}$
arbitrarily near $q_{1}$ and a periodic point
$\tilde{p}_{1}\in \Lambda_{1}$  
arbitrarily near $p_{1}$ such that 
$W^{u}(\tilde{q}_{1}, \tilde{f})\pitchfork W^{s}(\tilde{p}_{1}, \tilde{f})$ contains 
a $(d-2)$-sphere $\tilde{S}^{d-2}_{\tilde f}$  
which has at least two tangencies  with 
leaves of $\mathcal{F}^{ss}(\tilde{p}_{1},\tilde{f})$. See Figure \ref{fig6_4}.
\end{itemize}

Since $f|_{\Lambda_1}=\tilde f|_{\Lambda_1}$ and $\Lambda_1$ is sectionally dissipative by (i), $\tilde p_1$ is a  sectionally dissipative periodic point of $\tilde f$. 
Observe that there exists a foliated unstable cylinder $A_0^u$ in $W^{u}(\tilde{q}_{1}, \tilde{f})$ 
and a foliated stable cylinder $A_0^s$ in $W^{s}(\tilde{p}_{1}, \tilde{f})$ such that 
$$ A_0^u\cap W^{s}(\tilde{p}_{1},\tilde{f})=
\tilde f^N(  A_0^s \cap W^{u}(\tilde{q}_{1},\tilde{f}))
=\tilde{S}^{d-2}_{\tilde f}$$ 
for some integer $N>0$.
As in the proofs of Lemma \ref{lem4.2} and Proposition \ref{prop4.3}, we have sub-cylinders $A_n^u$ of $\tilde f^n(A_0^u)$ $C^{1}$ converging to $\ell_{\tilde f}^u$ as $n\to \infty$. See Figure \ref{fig6_4}.
Similarly, we have sub-cylinders $A_n^s$ of $\tilde f^{-n}(A_0^s)$ 
$C^{1}$ converging to a segment in $W_{\loc}^s(\tilde q_2,\tilde f)$ centered at $\tilde X$ as $n\to \infty$.
Hence, 
by making a $C^{2}$ perturbation of $\tilde f$ similar to the above one  in a small neighborhood of $\tilde X$, 
we can obtain a diffeomorphism $\tilde g$ arbitrarily $C^{2}$ close to $\tilde{f}$ 
such that 
$W^{u}(\tilde{q}_{1}, \tilde g)$ and $W^{s}(\tilde{p}_{1}, \tilde g)$ have a heterodimensional tangency of elliptic type $r$, 
while $L_{\tilde g}^u$ is still contained  in  $|\tilde{\mathcal{D}}|$. 
See Figure \ref{fig6_5}-(a).
\medskip

\begin{figure}[hbt]
\centering
\scalebox{0.8}{\includegraphics[clip]{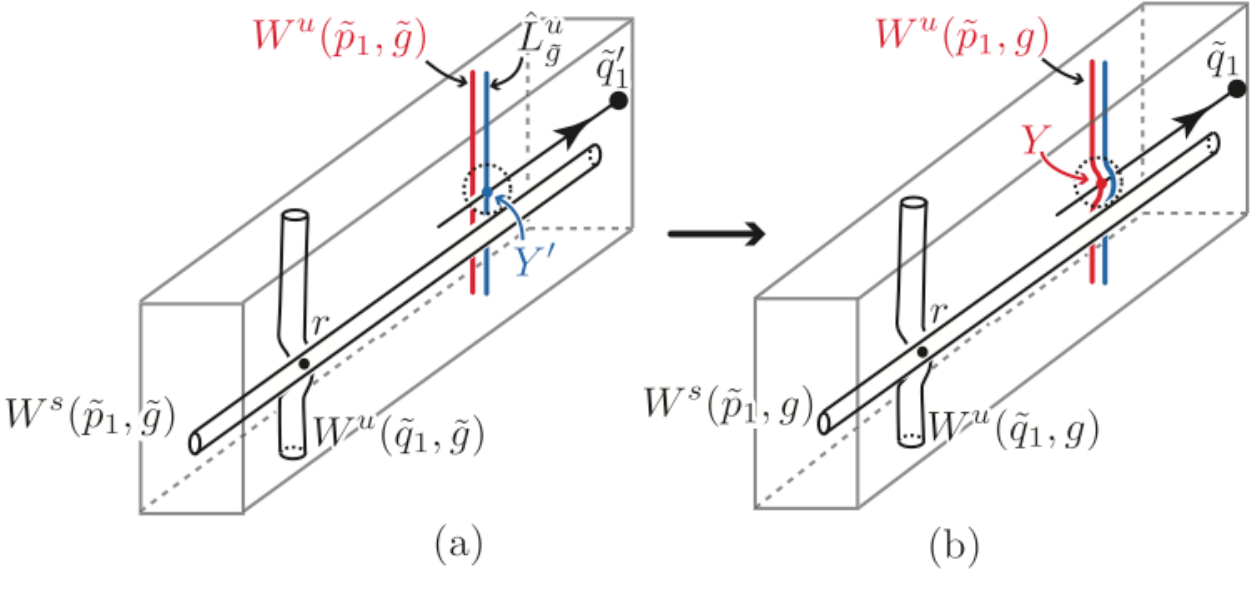}}
\caption{The last perturbation around $Y^{\prime}$.}
\label{fig6_5}
\end{figure}

\noindent
\textbf{(III) The 3rd perturbation from  $\tilde g$ to $g$.}
Since $\Lambda_{1}$ is transitive and contains $\tilde p_1$, $\tilde p_2$, 
there exists a segment $\hat L_{\tilde g}^u$ in $W^u(\tilde p_1,\tilde g)$ arbitrarily $C^2$ close 
to $L_{\tilde g}^u$ and hence it is also contained  in  $|\tilde{\mathcal{D}}|$.  
That is, 
there exists a constant $\delta_{0}>0$ and 
a $C^{2}$ embedding  $D: [-1,1] \times[-\delta, \delta]^{d-3} \to \Delta$ 
such that 
$$D([-1,1] \times\{\boldsymbol{0}^{d-3}\} )= \hat L_{\tilde g}^u,\quad 
D_{\delta}(\hat L_{\tilde g}^u):=D([-1,1] \times[-\delta, \delta]^{d-3}) \in \mathcal{D}$$
for every $0<\delta<\delta_{0}$.
Therefore, 
$$
W^{s}_{\loc}(\Lambda_{0},\tilde g)\cap D_{\delta}(\hat L_{\tilde g}^u)\neq\emptyset.
$$ 
This implies  that there exists a point $\tilde q_1^{\prime}\in \Lambda_0$ such that 
$$
W^{s}_{\loc}(\tilde q_1^{\prime}, \tilde{g})\cap \hat L_{\tilde g}^u\neq \emptyset.
$$
Note that, although $\tilde q_1^{\prime}$ may not be periodic point of $\tilde g$, 
it  is  an accumulation point of periodic points of $\Lambda_{0}$. 
Observe that $W^{s}_{\loc}(\tilde q_1^{\prime}, \tilde{g})\cap \hat L_{\tilde g}^u$ 
consists of a quasi-transverse intersection $Y^{\prime}$. See Figure \ref{fig6_5}-(a).
Thus, 
once again  by  adding a perturbation similar to the above one in a small neighborhood of $Y^{\prime}$,  
we have a diffeomorphism 
$g$ arbitrarily $C^2$ close to $\tilde g$ such that 
$W^u(\tilde p_1,g)$ and $W^s(\tilde q_1,g)$ have a quasi-transverse intersection $Y$.
By the condition (iv), it is possible to choose the perturbation which does not break the heterodimensional  tangency $r$. See Figure \ref{fig6_5}-(b).
Since $g|_{\Lambda_{1}}=\tilde f|_{\Lambda_{1}}$, $\tilde p_1$ is sectionally dissipative 
with respect to $g$. 
It follows that $g$ is a diffeomorphism satisfying all the conditions in Theorem \ref{theoremA}.
Thus, the claim of Theorem \ref{theoremC} follows directly from Theorem \ref{theoremA}.
\end{proof}

\subsection*{Acknowledgments}
The authors would like to thank the referee for valuable comments and suggestions, 
according to which many parts of this paper are improved and corrected.
The first and second 
authors were partially supported by 
JSPS KAKENHI Grant Numbers 
22540226 and
22540092, respectively.


\end{document}